\newcommand{\eps}{\varepsilon}
\newcommand{\R}{\mathbb{R}}
\newcommand{\C}{\mathbb{C}}
\newcommand{\Z}{\mathbb{Z}}
\newcommand{\es}[1]{\begin{equation}\begin{split}#1\end{split}\end{equation}}
\newcommand{\est}[1]{\begin{equation*}\begin{split}#1\end{split}\end{equation*}}
\newcommand{\s}{\operatorname{\mathcal{S}}}
\newcommand{\sss}{\operatorname{s}}
\newcommand{\tn}[1]{\textnormal{#1}}
\renewcommand{\mod}[1]{~\pr{\textnormal{mod}~#1}}
\newtheorem*{theo*}{Theorem}
\newtheorem{theo}{Theorem}
\newtheorem{ezer}{Exercise}
\newtheorem{prop}[ezer]{Proposition}
\newtheorem{lemma}{Lemma}
\newtheorem{corol}[lemma]{Corollary}
\newtheorem{remark}{Remark}
\newtheorem*{rem*}{Remark}
\def\sumstar{\operatornamewithlimits{\sum\nolimits^*}}
\newcommand{\pr}[1]{\left( #1\right)}
\newcommand{\pg}[1]{\left\{ #1\right\}}
\newcommand{\pmd}[1]{\left| #1\right|}
\newcommand{\sgn}{\operatorname{sgn}}
\newcommand{\e}[1]{\operatorname{e}\pr{ #1}}
\newcommand{\cc}{\operatorname{c}}
\let\originalleft\left
\let\originalright\right
\renewcommand{\left}{\mathopen{}\mathclose\bgroup\originalleft}
\renewcommand{\right}{\aftergroup\egroup\originalright}
\newcommand{\comment}[1]{}
\numberwithin{equation}{section}
\begin{document}

\title[On the reciprocity law for twisted Dirichlet $L$-functions]{On the reciprocity law for the twisted second moment of Dirichlet $L$-functions}
\author {Sandro Bettin}
\address{  Sandro Bettin\\
Centre de Recherches Math\'{e}matiques - Universit\'{e} de Montr\'{e}al, P.O. Box 6128, Centre-ville Station, Montr\'{e}al, QC, H3C 3J7, Canada
}
\curraddr{}
\email{bettin@crm.umontreal.ca}
\thanks{}

\subjclass[2010]{11M06 (primary), 11M41, 11A55 (secondary)}

\begin{abstract}
We investigate the reciprocity law, studied by Conrey~\cite{Con07} and Young~\cite{You11a}, for the second moment of Dirichlet L-functions twisted by $\chi(a)$ modulo a prime $q$. We show that the error term in this reciprocity law can be extended to a continuous function of $a/q$ with respect to the real topology. Furthermore, we extend this reciprocity result, proving an exact formula involving also shifted moments.

We also give an expression for the twisted second moment involving the coefficients of the continued fraction expansion of $a/q$, and, consequently, we improve upon a classical result of Selberg on the second moment of Dirichlet L-functions with two twists.

Finally, we obtain a formula connecting the shifted second moment of the Dirichlet $L$-functions with the Estermann function. In particular cases, this result can be used to obtain some simple explicit exact formulae for the moments.

\end{abstract}

\maketitle

\section{Introduction}
Since the work of Hardy and Littlewood~\cite{HL}, the study of mean-values of $L$-functions has played a central role in analytic number theory. This is due to the number of direct applications on several classical problems on $L$-functions, such as on their maximum size, on the proportion of zeros satisfying the Riemann hypothesis, on non-vanishing at the central points, and on Siegel zeros (see, for example,~\cite{Sou08,Lev,Sou00,IS}).

For these applications, one typically needs to understand twisted moments, which can be used to amplify large values or to  ``mollify'' the $L$-functions. Moreover, twisted moments indicate more clearly the structure and the symmetries of the moments. 

In~\cite{Con07}, Conrey considered the twisted second moment of Dirichlet $L$-functions at the central point,
\es{\label{ccrf}
M\pr{a,q}:=\frac{q^\frac12}{\varphi(q)}\sumstar_{\chi \mod q}\pmd{L\pr{\tfrac12,\chi}}^2\chi(a),
}
where $\sum^*$ indicates that the sum is restricted to primitive characters. (Notice that $M\pr{a,q}$ is real.) Conrey computed the asymptotic for $M\pr{a,q}$ and observed that, when $a,q$ are primes, $M\pr{a,q}$ satisfies an approximate reciprocity relation, highlighting a symmetry which is not immediately visible from the definition. More precisely, Conrey showed that for primes $a,q$ such that $2\leq a< q$, one has
\est{%\label{cff}
M\pr{a,q}-M\pr{-q,a}=\frac{q^\frac12}{a^\frac12}\pr{\log\frac qa+\gamma-\log8\pi}+\zeta\pr{\tfrac12}^2 +O\pr{\frac a{q^\frac12}+q^{-\frac12}\log q+\frac{1}{a^\frac12}\log q}.
}
This formula gives an asymptotic formula for $M\pr{a,q}-M\pr{-q,a}$ as long as $a=o(q^{\frac23})$. For comparison notice that the asymptotic formula for $M\pr{a,q}$ is known only on the smaller range $a=o( q^{\frac12})$, in which case we have
\es{\label{afcr1}
M(a,q)\sim (q/a)^{\frac12}\log (q/a). \\
}

In~\cite{You11a}, Young gave a new and more direct proof of~\eqref{ccrf}, improving also the error term. A slight reformulation of his result states that, for primes $a,q$ such that $2\leq a<q$, one has
\es{\label{yf}
M\pr{a,q}- M\pr{-q,a}&=\frac{q^\frac12}{a^\frac12}\pr{\log\frac qa+\gamma-\log8\pi}+\\
&\quad+\zeta\pr{\tfrac12}^2\pr{1-2\frac{q^\frac12}{\varphi(q)}(1-q^{-\frac12})+2\frac{a^\frac12}{\varphi(a)}(1-a^{-\frac12})} +\mathcal {E}(a,q),
}
where
\est{%\label{bfe}
\mathcal {E}(a,q)\ll aq^{-1+\eps}+ a^{-C},
}
for all fixed $\eps,C>0$. Notice, in particular, that~\eqref{yf} gives the asymptotic for $M\pr{a,q}-M\pr{-q,a}$, when $a\ll q^{1-\eps}$.

\begin{figure}[h]
\centering
\includegraphics[width=0.850\textwidth]{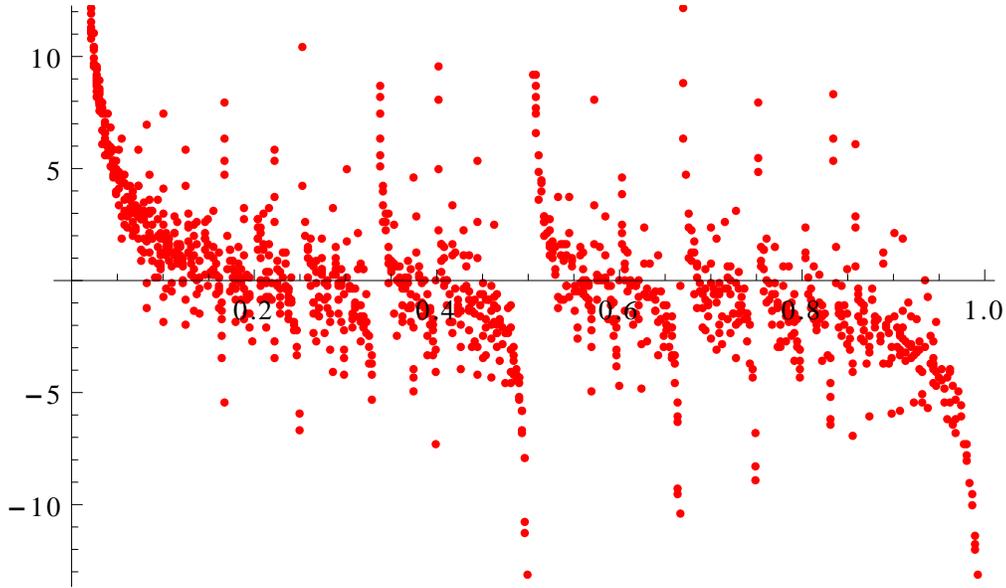}
\caption{$(a/q,M(a,q))$ for primes $a,q$ with $a<q\leq 229$.}
\label{fig:M_0}
\end{figure}
If one graphs the error term $\mathcal {E}(a,q)$ as a function $\mathcal {E}(\frac aq)$ of rational numbers $0<\frac aq<1$ with prime numerators and denominators, one sees that $\mathcal {E}(x)$ is not a ``chaotic'' error term and one is led to guess that it is extendable to a continuous function of $x\in[0,1]$. (See Figure~\ref{fig:cont} below). This first impression is indeed correct.
\begin{theo}\label{et}
Let $a,q\geq2$ be primes with $a\neq q$. Then, $\mathcal {E}(\frac aq):=\mathcal {E}\pr{a,q}$ extends to a continuous function $\mathcal {E}\pr{x}$ of the non-negative real numbers, which is $O(x)$ as $x\rightarrow0^+$. In particular $\mathcal {E}(\frac aq)\ll a/q$ for $a\ll q$.
\end{theo}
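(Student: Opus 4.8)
The plan is to upgrade Young's asymptotic identity \eqref{yf} to an \emph{exact} reciprocity formula, in which $\mathcal{E}(a,q)$ appears as the value at $a/q$ of an explicit analytic function of a single real variable. Starting from \eqref{ccrf}, I would expand $\pmd{L\pr{\tfrac12,\chi}}^2$ by an approximate functional equation into a smoothly weighted sum $\sum_{m,n}(mn)^{-1/2}\chi(m)\overline{\chi(n)}\,V(mn/q)$ together with its dual, multiply by $\chi(a)$, sum over the primitive characters $\chi\mod{q}$, and invoke orthogonality. For prime $q$ this collapses the character sum to the congruences $n\equiv\pm am\mod{q}$, leaving, after the diagonal contribution (which reproduces the $\log(q/a)$ and $\zeta(\tfrac12)^2$ main terms), an off-diagonal sum over $n\equiv\pm am\mod{q}$. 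Opening these congruences with additive characters and applying Voronoi summation brings in the Estermann function $D\pr{s,\tfrac aq}=\sum_{n\ge1}d(n)n^{-s}\e{na/q}$; writing $V$ through its Mellin transform then expresses $M(a,q)$ as a contour integral $\tfrac{1}{2\pi i}\int_{(c)}\Lambda(s)\,D\pr{s,\tfrac aq}\,\pr{\tfrac aq}^{-s}\,ds$, with $\Lambda$ a ratio of Gamma factors.

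The reciprocity then comes from the functional equation of the Estermann function, which relates $D\pr{s,\tfrac aq}$ to $D\pr{1-s,\tfrac{\bar a}q}$ at the modular inverse $a\bar a\equiv1\mod{q}$; because of the inverse reciprocity $\tfrac{\bar a}q+\tfrac{\bar q}a\equiv\tfrac1{aq}\mod{1}$, the resulting $q$-object is precisely the one attached to $M(-q,a)$. Thus the two off-diagonal integrals, from $M(a,q)$ and from $M(-q,a)$, cancel, and \emph{what remains} in $M(a,q)-M(-q,a)$ is: the residues at the double pole $s=1$ and at the poles of $\Lambda$, which account for all the explicit main terms of \eqref{yf}; plus a single ``completing'' contour integral $\mathcal{E}(a,q)=\tfrac1{2\pi i}\int_{(c)}\Phi(s)\,\pr{\tfrac aq}^{-s}\,ds$, where $\Phi$ is a ratio of Gamma and zeta factors carrying \emph{no} arithmetic dependence on $a,q$. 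The decisive feature is that $a$ and $q$ now enter only through $x=a/q$ via the factor $x^{-s}$, so this formula defines $\mathcal{E}(x)$ for every $x\in(0,\infty)$ and extends the a priori domain of fractions with prime numerator and denominator.

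Continuity on $(0,\infty)$ is then immediate by dominated convergence: by Stirling the factor $\Phi(s)$ decays exponentially in $\pmd{\Im s}$ along the contour, while $\pmd{x^{-s}}=x^{-\Re s}$ stays bounded for $x$ in any fixed compact subinterval, so the integrand is dominated by a fixed integrable function. For $x\to0^+$ I would move the contour to the left. The main terms already extracted in \eqref{yf} — the $x^{-1/2}\log(1/x)$ and $x^{-1/2}$ pieces from the double pole at $s=\tfrac12$ and the constant from the pole at $s=0$ — are exactly the residues of $\Phi(s)x^{-s}$ with $\Re s\ge0$, so these are absent from $\mathcal{E}$. Hence the contour for $\mathcal{E}(x)$ can be pushed into $\Re s<0$ down to the first remaining pole at $s=-1$, where $\pmd{x^{-s}}=x$; this gives $\mathcal{E}(x)=O(x)$ as $x\to0^+$, and in particular $\mathcal{E}(a/q)\ll a/q$ for $a\ll q$. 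Combined with continuity on $(0,\infty)$ and the value $\mathcal{E}(0)=0$, this yields continuity on all of $[0,\infty)$.

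The main obstacle is the reconciliation of arithmetic with analysis. A priori $\mathcal{E}(a,q)$ is an arithmetic quantity — it is built from the modular inverse $\bar a\mod{q}$ and ultimately encodes the continued-fraction data of $x=a/q$ — and such data is wildly discontinuous in $x$. The content of the theorem is that, in the specific combination $M(a,q)-M(-q,a)$, all of this arithmetic is absorbed by the Estermann functional equation and reassembles into the single arithmetic-free integral defining $\mathcal{E}(x)$. Making this rigorous requires executing the functional-equation step \emph{exactly}, with complete uniformity in $a$ and $q$, so that no residual $a,q$-dependence beyond $x$ leaks through, and controlling the tails of the $m,n$-sums and their duals well enough that the off-diagonal contributions of $M(a,q)$ and $M(-q,a)$ cancel identically rather than merely up to an error of comparable size. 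This uniform bookkeeping — not any single estimate — is where the real work lies; it is also what produces the clean bound $\mathcal{E}(x)\ll x$ in place of Young's $aq^{-1+\eps}$.
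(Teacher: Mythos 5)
Your overall plan---pass through the Estermann function and its functional equation---is in the right family (it is essentially Young's route), but the pivotal claim is false: the two off-diagonal contributions do \emph{not} cancel exactly, and no formula of the shape $\mathcal E(a,q)=\frac1{2\pi i}\int_{(c)}\Phi(s)\,(a/q)^{-s}\,ds$ with $\Phi$ free of arithmetic can hold. The functional equation sends $D\pr{s,\tfrac aq}$ to Estermann functions at $\pm\tfrac{\overline a}q$, and the identity $\tfrac{\overline a}q\equiv-\tfrac{\overline q}a+\tfrac1{aq}\pmod 1$ introduces the extra twist $\e{n/(aq)}$ into the dual sum: only after discarding this factor does one recognize the modulus-$a$ object $M(-q,a)$. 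Setting $\e{n/(aq)}\approx 1$ is precisely Young's approximation and produces his error $aq^{-1+\eps}$; keeping it and expanding generates an infinite series of \emph{shifted} moments $(\mp a/q)^jM_j^*(\mp q,a)$ --- in the paper's normalization, boundary values of the Eichler integrals $\s_j(s,-1/z)$ --- which survive inside $\mathcal E$. These terms cannot be absorbed into an $x$-only Mellin integral: your $\Phi$-integral would make $\mathcal E$ real-analytic on $(0,\infty)$, whereas already the $j=1$ leftover is, up to elementary factors, a Fourier series with coefficients $\sigma_0(n)n^{-3/2}$, whose polynomial decay rules out analyticity (indeed Theorem~\ref{mtc} and the remark following it show that gaining even $\mathcal C^N$ smoothness requires subtracting the shifted moments with $j\leq N$; compare the graphs of $\tilde\psi_N$ in Figure~\ref{fig:cont}). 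So the ``uniform bookkeeping'' you defer to the end is not a technical chore but a structural impossibility.

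What the paper actually proves is that this leftover arithmetic series is \emph{continuous}, and that is where the work lies. First, Theorem~\ref{tff} relates $M^*$ to $D$ exactly --- no approximate functional equation and no tails to control, just orthogonality plus the periodic/Hurwitz zeta decomposition (Lemmas~\ref{ffs} and~\ref{ftrr}); note this route never sees $\overline a$ at all, relating $a/q$ directly to $-q/a$. Second, Lemma~\ref{rtfs} establishes a reciprocity $z\mapsto-1/z$ for the Eisenstein-type series $\s_0(s,z)$ in the upper half-plane, by expanding the Gamma-quotient $H(w,s)$ through Gauss's hypergeometric evaluation~\eqref{ghf}, with a uniform Stirling-type approximation giving the truncated $\mathcal C^N$ version. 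Third, Proposition~\ref{cfd} lets $z\to a/q$ from above: the series $\s_j$ with $j\geq1$ converge absolutely on the closed upper half-plane, hence are continuous on $\R$, and the remainder $\mathcal E_N(s,z)$ is $\mathcal C^N$ with $\mathcal E_N\ll|z|^{2N+1}$; the case $N=0$ yields exactly the continuity and the $O(x)$ bound of Theorem~\ref{et}. Your dominated-convergence and contour-shift arguments are sound in themselves, but they apply only to the genuinely analytic pieces of the exact formula (the integrals $Z_\pm$ and $W_\pm$ of Proposition~\ref{cfd} and Theorem~\ref{mt}), not to $\mathcal E$ itself.
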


In fact, one can say quite more, but first, for convenience of notation, we define the shifted twisted second moment with a different weight attached to the $L$-function associated to the principal character (i.e., essentially to $\zeta$):
 \est{
M^*_j\pr{a,q}&:=\frac{q^{\frac12-j}}{\varphi(q)}\hspace{-0.168cm}\sumstar_{\chi\mod q} L\pr{\tfrac12-j,\overline\chi}L\pr{\tfrac12+j,\chi}\chi(a)+\frac{q^{-j}}{\varphi(q)} \zeta\pr{\tfrac12+j}\zeta\pr{\tfrac12-j}(2q^\frac12 -q^j-q^{-j})\\
&=\frac{q^{\frac12-j}}{\varphi(q)} \sum_{\chi\mod q} L\pr{\tfrac12-j,\overline\chi}L\pr{\tfrac12+j,\chi}\chi(a)+\frac{q^{-j}}{\varphi(q)} \zeta\pr{\tfrac12+j}\zeta\pr{\tfrac12-j}(q^\frac12 -q^{-\frac12}).
}
We also define $P_j(x)$ as the degree $j$ polynomial given by
\est{
P_j\pr{x}:=\sum_{\ell=0}^j\binom{j-\frac12}{\ell-\frac12}\zeta\pr{\tfrac12+\ell}\zeta\pr{\tfrac12-\ell}x^{\ell}.
}

\begin{theo}\label{mt}
Let $a,q>0$ be different primes. Then
\es{\label{mtf}
M_0^*\pr{\pm a,q}&=\sum_{j=0}^\infty \binom{j-\frac12}{j}\bigg(\pr{\frac{\mp a}{q}}^jM_j^*\pr{\mp q,a}+r_{\pm,j}(\tfrac aq)-P_j\pr{\tfrac{\mp a}{q}}\bigg)+W_{\pm}(\tfrac aq)+g_{\pm}(\tfrac aq)-r_{\mp,0}(\tfrac qa),\\
}
where 
\est{
W_{\pm}(x)&:=\frac1{2\pi i}\int_{\pr{-\frac12}}\frac{\Gamma(w)}{ \sin \pi w} \zeta\pr{\frac12+w}^2 \pr{\cos\pr{\tfrac \pi 2w}\pm \sin\pr{\tfrac \pi 2w}}(2\pi x)^{-w}\,dw,\\
g_{\pm}(x)&:=\zeta(\tfrac 12)^2\pr{\tfrac12\pm\tfrac12-\tfrac1\pi(\log (z/4)},\\
r_{\pm,j}(z)&:=\begin{cases}
\frac{\pi }{2}z^{\frac12} & \tn{if }\pm=+,\\
(\log( 2\pi/z)-\Psi(\tfrac12-j)-2\gamma)z^{\frac12} & \tn{if }\pm=-.
\end{cases}
}
As usual, $\int_{(c)}\cdot\ ds$ indicates that the integral is taken along the vertical line from $c-i\infty$ to $c+i\infty$ and $\Psi(x)$ is the digamma function.
\end{theo}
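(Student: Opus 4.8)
The plan is to reduce the left-hand side to the Estermann function and then run its functional equation to convert the fraction $a/q$ into $q/a$. First I would open up the product $L(\tfrac12-j,\overline\chi)L(\tfrac12+j,\chi)$ as a double Dirichlet series and sum over all $\chi\mod q$ against the twist $\chi(\pm a)$; orthogonality collapses the character sum to the congruence $\pm a\,m\equiv n\mod q$, and peeling off the principal character accounts for the $\zeta(\tfrac12+j)\zeta(\tfrac12-j)$ correction already built into the definition of $M_j^*$. What remains is a divisor-weighted sum over $n$ twisted by additive characters $\e{\pm n\overline a/q}$, which, after inserting a Mellin transform in the shift variable, is exactly a contour value of the Estermann function $D(\cdot,\pm a/q)$.

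Thus I would write $M_0^*(\pm a,q)$ as a single Mellin--Barnes integral of the shape $\frac{1}{2\pi i}\int_{(c)}G_\pm(w)\,D(\tfrac12+w,\pm a/q)\,dw$, where $G_\pm$ collects the gamma factors and the combination $\cos(\tfrac\pi2 w)\pm\sin(\tfrac\pi2 w)$ coming from the completed functional equation of $L(s,\chi)$; the integrals of this type that survive the reciprocity step are precisely the terms $W_\pm$ in the statement. I would then apply Estermann's functional equation, which trades $D(\cdot,\pm a/q)$ for a companion series at the reciprocal argument together with explicit gamma and zeta factors. Reading the companion series back through orthogonality on the modulus $a$ reconstitutes the shifted moments $M_j^*(\mp q,a)$; the conductor/gamma ratio produced by the functional equation is, up to elementary factors, the generating function $(1\mp a/q)^{-1/2}=\sum_{j\ge0}\binom{j-\frac12}{j}(\mp a/q)^j$, so its $j$-th Taylor coefficient couples $\binom{j-\frac12}{j}(\mp a/q)^j$ to the shift-$j$ moment $M_j^*(\mp q,a)$, producing exactly the series in \eqref{mtf}.

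It then remains to bookkeep the residues crossed while shifting the contour and while running the functional equation. The double pole of $\zeta(\tfrac12+w)^2$ contributes the logarithmic term $g_\pm$, while the poles of the gamma factors and the boundary terms of the reciprocity produce the half-integer-power terms $r_{\pm,j}$ (the $\log$ in the $-$ case reflecting the digamma residue $\Psi(\tfrac12-j)$). The polynomials $P_j$ enter as the polar, polynomial-in-$a/q$ part of each $M_j^*(\mp q,a)$ as $a/q\to0$; subtracting $P_j(\mp a/q)$ is precisely the regularization that makes the series over $j$ converge, since $M_j^*$ agrees with $P_j$ to leading order for small argument. The hard part will be this last point: both $M_j^*$ and $\binom{j-\frac12}{j}$ grow with $j$, so convergence of the sum rests on the delicate cancellation among $M_j^*(\mp q,a)-P_j(\mp a/q)$ and the residue terms $r_{\pm,j}$, and on the uniform bounds needed to legitimately interchange the $j$-summation with the contour integral. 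Pinning down the exact constants in $W_\pm$, $g_\pm$, and $r_{\pm,j}$ through careful residue accounting is where the genuine work lies.
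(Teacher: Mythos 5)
Your opening reduction---orthogonality of characters, peeling off the principal character, and reassembling the resulting congruence sum into the Estermann function---is exactly the paper's route to Theorem~\ref{tff} (Lemmas~\ref{ffs} and~\ref{ftrr}). The genuine gap is in your reciprocity step. The functional equation of the Estermann function is Voronoi summation modulo $q$: it trades $D\pr{s+z,2s-1;\pm\tfrac aq}$ for a companion series at $\mp\overline a/q$, with the \emph{same} denominator $q$ and with gamma and conductor factors attached to $q$. It never produces the argument $q/a$, so there is no series of modulus $a$ for orthogonality to reconstitute into $M_j^*\pr{\mp q,a}$; the interchange of numerator and denominator is precisely the nontrivial symmetry being proved, and no manipulation of the functional equation alone delivers it. The paper obtains it by leaving the rationals: it embeds $D$ in the Eisenstein-type series $\s_0\pr{s,z}=\sum_{n\geq1}\e{nz}\sigma_{2s}(n)n^{-s-\frac12}$ on the upper half-plane, writes this as the Mellin--Barnes integral of $\Gamma(w)\zeta\pr{\tfrac12+w+s}\zeta\pr{\tfrac12+w-s}(-2\pi iz)^{-w}$, and applies the functional equation of $\zeta$ together with the flip $w\rightarrow-w$, which converts the kernel $(-2\pi iz)^{-w}$ into $(2\pi i/z)^{-w}$ and hence realizes the modular substitution $z\mapsto-1/z$ (Lemma~\ref{rtfs}); letting $z\rightarrow\pm\tfrac aq$ from the upper half-plane then turns $-1/z$ into $\mp\tfrac qa$ (Proposition~\ref{cfd}), and only afterwards does one translate back to moments via Theorem~\ref{tff}.

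Your generating-function mechanism also cannot work as stated: multiplying a single companion series by the Taylor expansion $(1\mp a/q)^{-1/2}=\sum_{j\geq0}\binom{j-\frac12}{j}(\mp a/q)^j$ would attach every coefficient to one and the same moment, and cannot by itself create the \emph{distinct} shifts $j$ in $M_j^*\pr{\mp q,a}$. In the paper the shifts arise from expanding the gamma-ratio $H(w,s)$ via Gauss' formula~\eqref{ghf}, namely ${}_2F_1\pr{\tfrac12+s,\tfrac12-s;1-w;1}\,\Gamma(w)=\sum_{j\geq0}(-1)^jQ_{2j}(s)\Gamma(w-j)/j!$, valid because $\Re(-w)>0$ on the contour; each shifted factor $\Gamma(w-j)$ under the Mellin integral changes the weight $n^{-s-\frac12}$ into $n^{-s-\frac12-j}$, producing the Eichler integrals $\s_j\pr{s,-1/z}$, hence $D_j$ and then $M_j^*\pr{\mp q,a}$, with $Q_{2j}(0)/j!$ yielding $\binom{j-\frac12}{j}$ after $s\rightarrow0$. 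Convergence of the $j$-sum is then plain absolute convergence of the hypergeometric series on $\Re(w)=-\tfrac12-\delta$, not the delicate cancellation you anticipate (it is the failure of~\eqref{ghf} for $\Re(w)\geq0$ that destroys uniformity in $a/q$ and motivates Theorem~\ref{mtc}). Your residue bookkeeping is likewise off: the double pole of $\zeta\pr{\tfrac12+w}^2$ at $w=\tfrac12$ contributes $-r_{\mp,0}(\tfrac qa)$, whereas $g_\pm$ with its $\log(z/4)$ comes from the double pole of $\Gamma(w)/\sin\pi w$ at $w=0$; the $P_j$ and $r_{\pm,j}$ are residues of $\Gamma(w-j)$ at $w=\ell\leq j$ and of the zeta factors at $w=\tfrac12\pm s$ inside each shifted integral, rather than polar parts of $M_j^*$ at small argument.
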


\begin{remark}
Notice that $W_\pm(x)$ is analytic on $\C\setminus\R_{\leq 0}$ and, by contour integration, it satisfies the asymptotic expansion
\est{
W_\pm(x)=\sum_{j=1}^N (c_{1,\pm}\log x +c_ {2,\pm})x^j+O_{N,\eps}\pr{|x|^{N+1}\log |2+x|},
}
in $|\arg x|<\pi-\eps$, as $x\rightarrow0$, for all $N\geq1$, $\eps>0$.
\end{remark}

\begin{remark}
It is easy to show that Theorem~\ref{mt} (as well as Theorem~\ref{mtc} below) holds also if $a=1$ or $q=1$. In particular, taking $a=1$, one obtains an exact formula for the second moment of the Dirichlet $L$-functions.
\end{remark}

The sum on the right hand side of~\eqref{mtf} is not uniformly convergent on $\frac aq\in \R_{\geq 0}$ and thus we can not immediately deduce Theorem~\ref{et}. This is essentially due to the fact that Gauss' hypergeometric formula (one of the crucial tools in the proof of~\eqref{mtf}), 
\es{\label{ghf}
\frac{\Gamma(c)\Gamma(c-a-b)}{\Gamma(c-a)\Gamma(c-b)}={}_2F_1 (a,b;c;1),
}
holds only for $\Re(a+b)<\Re(c)$. As usual ${}_2F_1 (a,b;c;1)$ indicates the hypergeometric function
\est{
{}_2F_1 (a,b;c;1)=1+\sum_{n=1}^\infty \frac{a(a+1)\cdots(a+n-1)\cdot b(b+1)\cdots(b+n-1)}{c(c+1)\cdots(c+n-1)} \frac{z^n}{n!}.
}
The following theorem is obtained by replacing~\eqref{ghf} with an approximation valid for all $a,b,c$, obtained with a strategy close in spirit to that of the main Lemma in the recent works of Kaczorowski and Perelli on the Selberg class (see Lemma A of~\cite{KP02}).

\begin{theo}\label{mtc}
Let $a,q>0$ be different primes. Let $N\geq0$. Then
\es{\label{mtfa}
M_0^*\pr{\pm a,q}&=\sum_{j=0}^{2N} \binom{j-\frac12}{j}\bigg(\pr{\frac{\mp a}{q}}^jM_j^*\pr{\mp q,a}+\frac{\zeta(\frac12+j)\zeta(\frac12-j)}{j!}\pr{\frac{\mp a}{q}}^j\bigg)-r_{\mp}(q/a)+\psi_N(\tfrac{\pm a}{q}),\\
}
where $r_{\pm}(x)$ is as in Theorem~\ref{mt} and $\psi_N(x)$ is a $\mathcal C^N$ function on $\R$ satisfying $\psi_N(x)\ll x^{2N+1}$ for all $\eps>0$, $x\ll 1$. 
\end{theo}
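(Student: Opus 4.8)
The plan is to repeat the proof of Theorem~\ref{mt} verbatim up to the point where Gauss' identity \eqref{ghf} is invoked, and to replace that exact evaluation by a truncated version that remains valid for all $a,b,c$, free of the constraint $\Re(a+b)<\Re(c)$. Recall that in the derivation of \eqref{mtf} the dependence on $x:=\pm a/q$ enters through a series of the shape $\sum_{j\geq0}\binom{j-\frac12}{j}\,c_j\,x^{\,j}$, whose coefficients $c_j$ split into a part carrying the shifted moments $M_j^*(\mp q,a)$ and a part built purely out of the products $\zeta(\tfrac12+\ell)\zeta(\tfrac12-\ell)$ of zeta values. It is only this second part that is summed in closed form via \eqref{ghf}, producing the functions $P_j$, $W_\pm$ and $g_\pm$ of Theorem~\ref{mt}; since \eqref{ghf} holds only in the half-plane $\Re(a+b)<\Re(c)$, the resulting series converges merely conditionally and non-uniformly, which is exactly the obstruction to deducing Theorem~\ref{et} directly.

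First I would isolate an approximate Gauss formula. Writing the relevant hypergeometric sum through its Mellin--Barnes integral and truncating the series after $2N+1$ terms, one obtains, for every $N\geq0$ and all admissible $a,b,c$,
\[
{}_2F_1(a,b;c;1)=\sum_{n=0}^{2N}\frac{(a)_n(b)_n}{(c)_n\,n!}+\mathcal{R}_N(a,b;c),
\]
where the remainder $\mathcal{R}_N$ is represented by a contour integral taken along a fixed vertical line lying to the right of the first $2N+1$ poles of the integrand. This is precisely the device behind Lemma~A of \cite{KP02}: rather than closing the contour (which would recover the full, possibly divergent, Gauss series), one keeps it on a fixed line, so that the identity persists for all parameters at the cost of the integral remainder. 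Substituting this into the proof of \eqref{mtf}, the $2N+1$ explicit terms reassemble into the finite sum $\sum_{j=0}^{2N}\binom{j-\frac12}{j}\big((\tfrac{\mp a}{q})^j M_j^*(\mp q,a)+\tfrac{\zeta(\frac12+j)\zeta(\frac12-j)}{j!}(\tfrac{\mp a}{q})^j\big)$ together with the term $-r_\mp(q/a)$ of \eqref{mtfa}, while all the remainders coming from the various resummations are collected into a single function $\psi_N(\pm a/q)$.

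It then remains to establish the two asserted properties of $\psi_N$, namely that it is $\mathcal C^N$ on $\R$ and that $\psi_N(x)\ll_\eps x^{\,2N+1}$ for $x\ll1$. Both are read off from the contour representation. The decay $O(x^{2N+1})$ reflects the position of the line defining $\mathcal{R}_N$ to the right of the first $2N+1$ poles, so that shifting past them yields a gain of $x^{2N+1}$; the smoothness follows by differentiating under the integral sign, each $x$-derivative pulling down a factor $w/x$ from the kernel $x^{-w}$, while the gamma factors (through $\Gamma(w)/\sin\pi w$, as in $W_\pm$) still force absolute convergence after the differentiations by Stirling's formula. Uniformity is automatic here because, by construction, $\psi_N$ depends only on $x=\pm a/q$, the moment terms $M_j^*(\mp q,a)$ having been separated off into the explicit finite sum, so that the required regularity and decay are intrinsic properties of the single-variable function defined by the remainder integral.

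The step I expect to be the main obstacle is this last one: checking that after the differentiations in $x$ the integral defining $\psi_N$ is still absolutely convergent and of the correct size, and in particular clarifying why the gain in smoothness ($\mathcal C^N$) is only half the gain in decay ($x^{2N+1}$). The contour must be kept clear of the poles of $1/\sin\pi w$, whose residues are precisely what generate the explicit polynomial terms, and the delicate point is to balance the truncation length $2N$ against the number of derivatives $N$ so that both the smoothness and the decay hold at once; the even truncation is chosen exactly to match the smoothness order $N$. Once this bookkeeping is carried out, \eqref{mtfa} follows, and comparing the resulting main terms with those of Theorem~\ref{mt} confirms their consistency.
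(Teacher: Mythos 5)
Your overall route is the paper's own. The paper proves Theorem~\ref{mtc} by rerunning the Mellin--Barnes argument behind Theorem~\ref{mt} (through the series $\s_0(s,z)$ of Lemma~\ref{rtfs}), with Gauss' formula~\eqref{ghf} replaced by a finite expansion of the kernel, $H(w,s)=\sum_{j=0}^{M}Q_{2j}(s)\Gamma(w-j)/j!+O_M\pr{(1+|s|^{2M+2})|\Gamma(w)|/|w|^{M+1}}$, proved via the uniform Stirling formula of~\cite{KP11} and identified with the truncated Gauss series by comparison with~\eqref{ffh2} on the half-plane where~\eqref{ghf} is valid --- exactly the Kaczorowski--Perelli device you invoke. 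The residues picked up in shifting the contour of~\eqref{sfh} reassemble into the finite sum and the term $-r_\mp(q/a)$ just as you say, and the statement descends to $M_0^*$ through Proposition~\ref{cfd}, the limit $s\to0$, and Corollary~\ref{ctff}. So the architecture of your proposal is sound and matches the paper.

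The genuine gap is at the step you yourself flag as the main obstacle, and your two proposed mechanisms for handling it are mutually incompatible. For real $x$ --- the boundary case $\Im z\to0^+$, which is the entire point of the theorem --- the factor $(2\pi i/z)^{-w}$ contributes $e^{\pi|\Im w|/2}$ on one side and cancels the exponential decay of $\Gamma(w)$ on the vertical line; absolute convergence after differentiation is therefore \emph{not} ``forced by Stirling'' but rests solely on the polynomial saving $|w|^{-(M+1)}$ bought by an $M$-term truncation. On the line $\Re(w)=\sigma$ the integrand after $\ell$ derivatives is of order $|t|^{\sigma-\frac12+\ell-(M+1)}$, so the decay exponent $\sigma$ and the smoothness order $\ell$ compete for one budget, $\sigma+\ell\lesssim M$. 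With your single contour placed right of the first $2N+1$ poles ($\sigma\approx 2N+1$, $M=2N+1$) the remainder has the decay $x^{2N+1}$ but is merely continuous, not $\mathcal C^N$. The paper instead takes $\sigma=N+\frac32-\eps$ with truncation $2N+1$, obtaining a remainder that is $\mathcal C^N$ but decays only like $|z|^{N+\frac32-\eps}$ (this is~\eqref{ffb1}), and then recovers the full bound $|z|^{2N+1}$ of~\eqref{ffb2} by applying~\eqref{mmmfao} with $2N$ in place of $N$ and absorbing the explicit terms with $2N<j\leq 4N$ into the error: these have the shape $z^j\s_j(s,-1/z)$, are of size $O(|z|^j)=O(|z|^{2N+1})$, and since each $z$-derivative of $\e{-n/z}$ costs a factor $z^{-2}$, a term of size $|z|^j$ survives only about $j/2$ differentiations with derivatives still vanishing at $0$. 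That is the structural answer to the question you raised but left open --- why the smoothness order is half the decay order --- and without this doubling-and-reabsorbing maneuver (or an equivalent), your argument delivers either the $\mathcal C^N$ regularity or the $x^{2N+1}$ decay of $\psi_N$, but not both at once, so the key assertion of the theorem is not established as proposed.
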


\begin{remark}
One can also truncate at $j=N$ the sum on the right of~\eqref{mtfa}, adding the terms $j>N$ to $\psi_N$ and getting
\est{
M_0^*\pr{\pm a,q}&=\sum_{j=0}^{N} \binom{j-\frac12}{j}\bigg(\pr{\frac{\mp a}{q}}^jM_j^*\pr{\mp q,a}+\frac{\zeta(\frac12+j)\zeta(\frac12-j)}{j!}\pr{\frac{\mp a}{q}}^j\bigg)-r_{\mp}(q/a)+\tilde\psi_N(\tfrac{\pm a}{q}).\\
}
With this choice, the error function $\tilde \psi_N$ satisfies  $\tilde\psi_N(x)\ll x^{N+1}$, for $x\ll1$, and is $\mathcal C^N$ in $\R\setminus\{0\}$ and $C^{[N/2]}$ at $x=0$ (cf. Remark~\ref{rk2} below). 
\end{remark}

Notice that Theorem~\ref{mtc} shows that the discontinuities (with respect to the real topology) of $M_0^*(a,q)$ as a function of rational numbers $\frac aq$ can be removed by subtracting the reciprocal shifted twisted moments, $(-1)^j{{j-\frac12}\choose{j}}\frac{a^j}{q^j}M^*_j(-q,a)$. See Figure~\ref{fig:cont} below. 

\begin{figure}[h]
\includegraphics[width=0.485\textwidth]{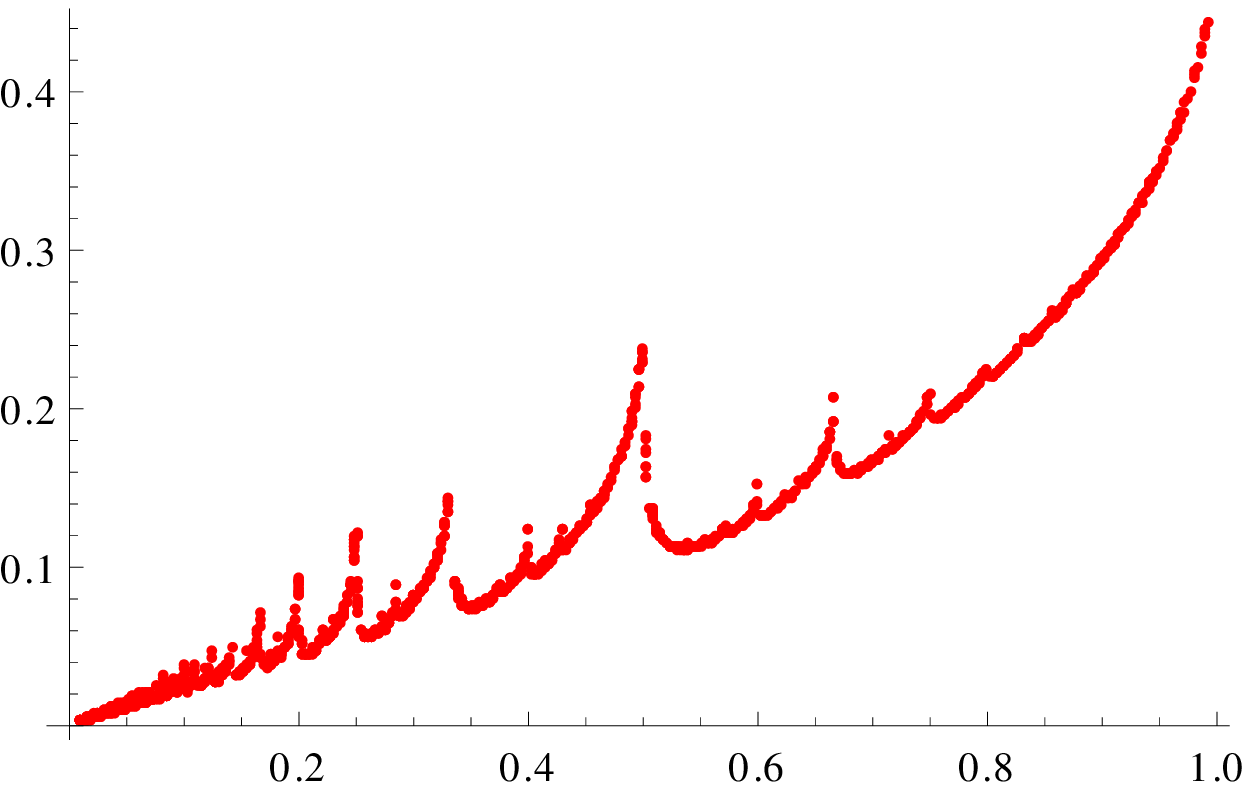}\vspace{0.3cm}
\includegraphics[width=0.485\textwidth]{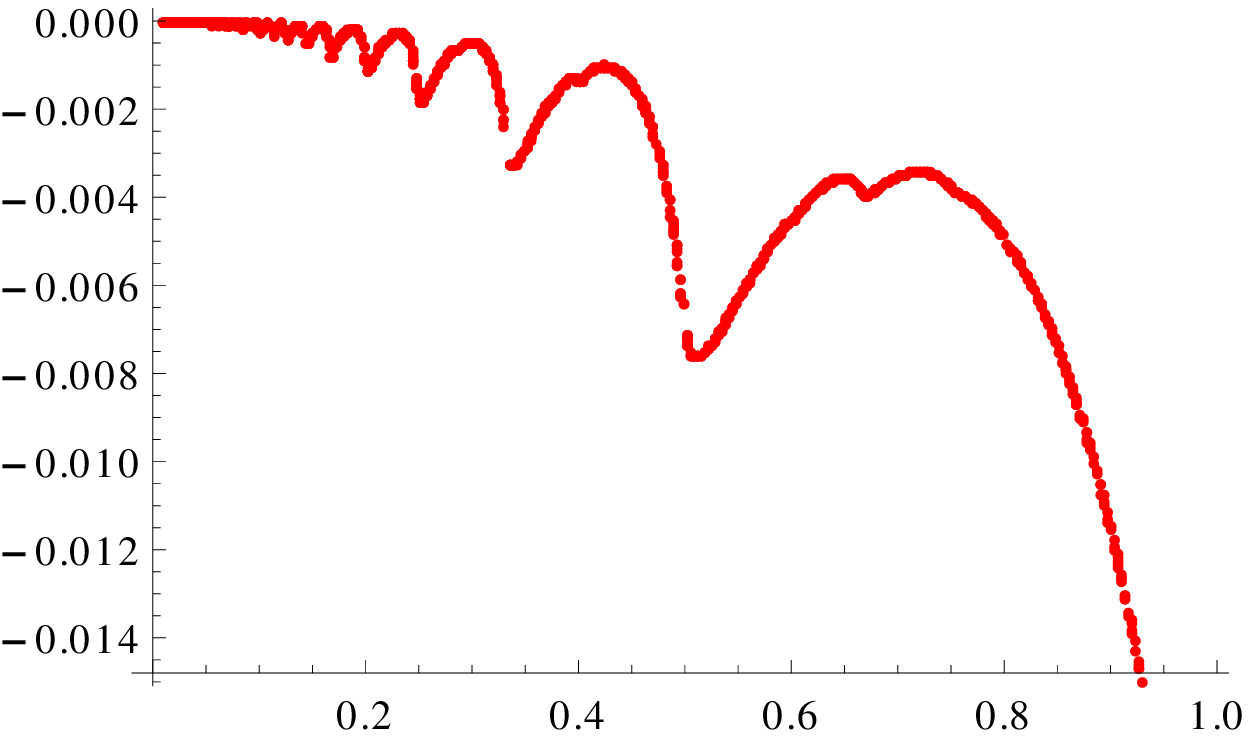}\hspace{0.42cm}
\includegraphics[width=0.485\textwidth]{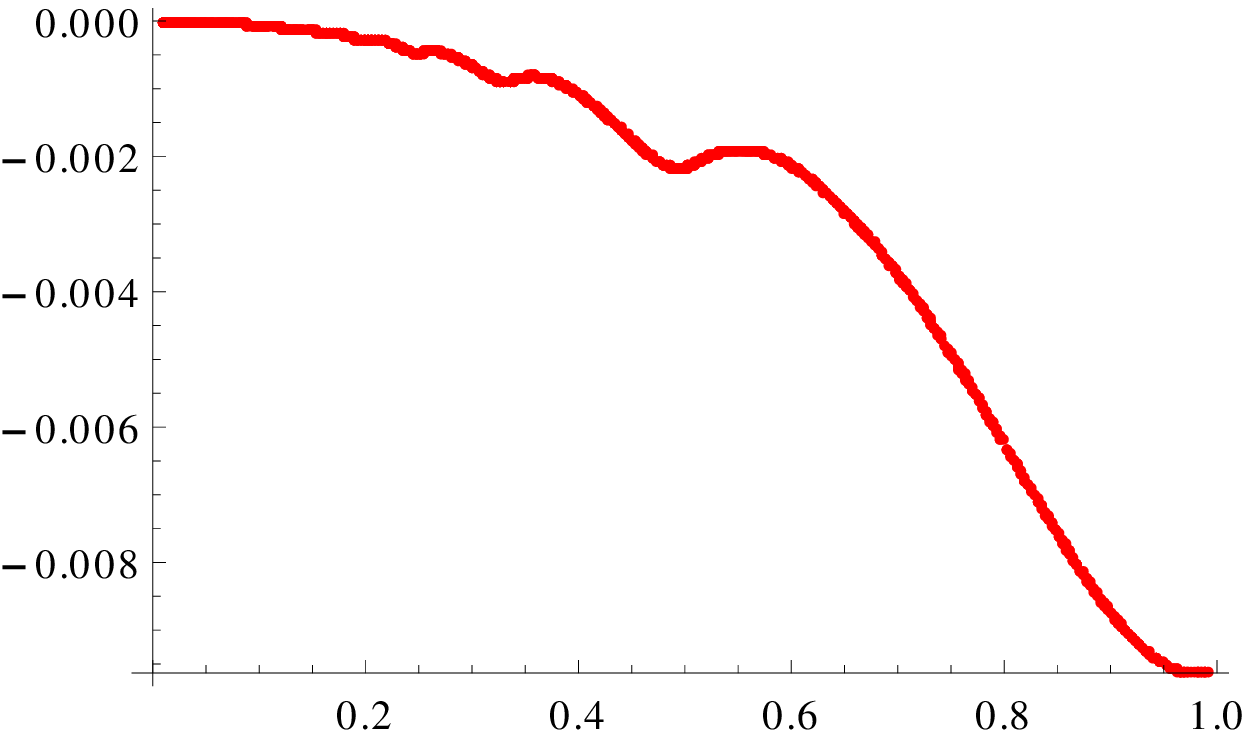}
\includegraphics[width=0.485\textwidth]{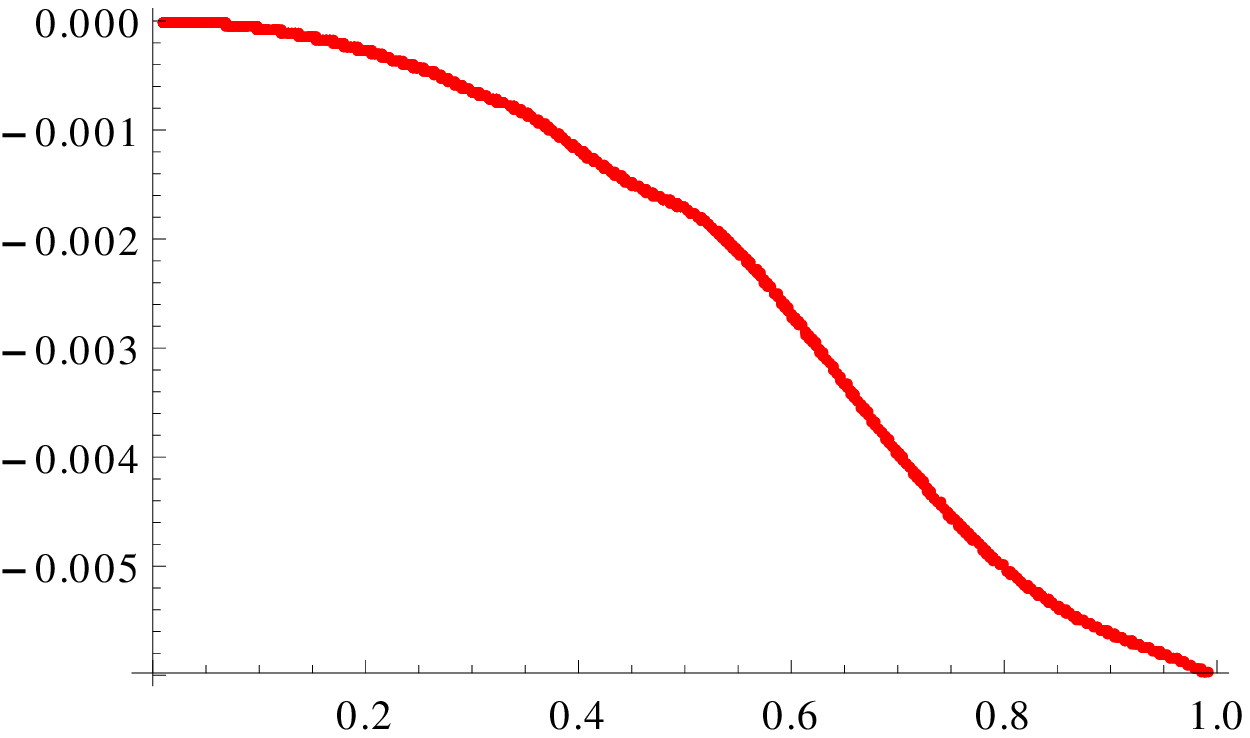}
\caption{A graph of $\tilde \psi_N(\frac aq)$ (notice that $\tilde\psi_0(a/q)$ coincide with $\mathcal E(a/q):=\mathcal E(a,q)$, defined by~\eqref{yf}) for $N=0,1,2,3$, where $a,q$ vary among primes $a<q\leq 229$ and larger primes with $a/q$ close to rationals with small denominators.}
\label{fig:cont}
\end{figure}

Theorem~\ref{mt} should be compared with the main result of~\cite{BC13a}. In that paper Conrey and the author proved that a cotangent sum $c_0\pr{\frac hk}$, strictly related to the the twisted second moment of the Riemann zeta-function, satisfies a reciprocity relation $c_0\pr{\frac hk}+\frac khc_0\pr{\frac hk}-\frac1{\pi h}=\psi\pr{\frac hk}$, where $\psi(x)$ is analytic on $\C\setminus\R_{\leq 0}$. Moreover, in~\cite{Bui}, Bui gives an approximate reciprocity relation for the twisted second moment of $L$-functions associated with primitive Hecke eigenforms of weight 2. He does this by computing separately the asymptotic for the moments appearing in the formula. It would be interesting to see whether there is a  more direct proof of Bui's theorem, which extends his result to a wider range, giving a more genuine reciprocity relation also for the second moment of $L$-functions in this family. More generally, one might also speculate whether all twisted moments of $L$-functions satisfy some hidden reciprocity formula.

In his paper, Young observes also that ``the reciprocity relation is not self-dual, so it could potentially be used recursively to obtain a curious kind of asymptotic expansion''. (The limitation coming from the requirement of the primality of $a,q$ and of all the integers encountered in this recursion can be superseded by proving a reciprocity formula for an intermediate function, valid for all integers). Following Young's observation, one arrives to the following theorem.
\begin{theo}\label{Ypo}
Let $a,q\in\Z_{>0}$ with $q$ prime. Let $[b_0;b_1,\cdots,b_{\kappa}]$ be the continued fraction expansion of $a/q$ and let $v_j$ be the $j$-th partial denominator. Then
 %\es{\label{prefo1z}
%M_0^*\pr{\pm a,q}&= \zeta\pr{\tfrac12}^2({\kappa}+1)+\frac12\sum_{j=1}^{\kappa}\pr{\frac {v_{j}}{v_{j-1}}}^{\frac12}\pr{\log \frac {v_{j}}{v_{j-1}}+\gamma -\log 8\pi -\frac {\pi }2}+{}\\
%&\quad\mp\frac 12\sum_{j=1}^{\kappa}(-1)^{j}\pr{\frac {v_{j}}{v_{j-1}}}^{\frac12}\pr{\log \frac {v_{j}}{v_{j-1}}+\gamma -\log 8\pi +\frac {\pi }2}+\sum_{j=1}^{\kappa} \psi_0\pr{\pm  (-1)^{j}\frac {v_{j-1}}{v_{j}}},
%}
 \es{\label{prefo1t}
M_0^*\pr{\pm a,q}&= \sum_{\substack{j=1,\\ (-1)^j=\mp1}}^{\kappa}\pr{\frac {v_{j}}{v_{j-1}}}^{\frac12}\pr{\log \frac {v_{j}}{v_{j-1}}+\gamma -\log 8\pi }-\frac {\pi }2\sum_{\substack{j=1,\\ (-1)^j=\pm1}}^{\kappa}\pr{\frac {v_{j}}{v_{j-1}}}^{\frac12}+{}\\
&\quad+\zeta\pr{\tfrac12}^2({\kappa}+1)+\sum_{j=1}^{\kappa} \psi_0\pr{\pm  (-1)^{j}\frac {v_{j-1}}{v_{j}}},
}
where $\psi_0$ is as in Theorem~\ref{mtc}.
\end{theo}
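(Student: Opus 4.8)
The plan is to prove \eqref{prefo1t} by iterating the reciprocity relation of Theorem~\ref{mtc} in the simplest case $N=0$. Since $\binom{-1/2}{0}=1$, that case reduces to the clean two-term reciprocity
\[
M_0^*\pr{\pm a,q}=M_0^*\pr{\mp q,a}+\zeta\pr{\tfrac12}^2-r_{\mp}(q/a)+\psi_0\pr{\tfrac{\pm a}{q}},
\]
which I would combine with the elementary periodicity of $M_0^*(\,\cdot\,,q)$ in its first argument modulo $q$ (immediate from the periodicity of $\chi(a)$). Applying the reciprocity turns the pair $(a,q)$ into $(q,a)$, and the periodicity then lets me replace $M_0^*(\mp q,a)$ by $M_0^*(\mp(q\bmod a),a)$, whose modulus $a$ is strictly smaller than $q$ and whose numerator $q\bmod a$ is the next term of the Euclidean algorithm on $(q,a)$. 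Iterating, the moduli descend through the full remainder sequence $q=m_0>a=m_1>\cdots>m_\kappa=1$, which is exactly the sequence of convergent denominators of $a/q$ read in reverse, $m_i=v_{\kappa-i}$, so that the quotients appearing at each stage are the ratios $v_j/v_{j-1}$ with $j=\kappa-i$.

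Carrying out the iteration, every application of the reciprocity contributes precisely one copy of $\zeta(\tfrac12)^2$, one term $-r_{\pm}(\cdot)$, and one term $\psi_0(\cdot)$; it remains to identify these with the three groups of terms in \eqref{prefo1t}. Using $\Psi(\tfrac12)=-\gamma-2\log2$ one checks the two elementary identities
\[
-r_{-}(q/a)=\pr{\tfrac qa}^{\frac12}\pr{\log\tfrac qa+\gamma-\log8\pi},\qquad -r_{+}(q/a)=-\tfrac\pi2\pr{\tfrac qa}^{\frac12},
\]
so that, after the index reversal $i\mapsto j=\kappa-i$, the log-type $r$-terms assemble into the first sum of \eqref{prefo1t} and the $\tfrac\pi2$-type $r$-terms into the second, the two summation conditions $(-1)^j=\mp1$ and $(-1)^j=\pm1$ recording which of $r_{-}$ or $r_{+}$ is produced by the alternating sign $\pm,\mp,\pm,\dots$ at successive stages. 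Likewise the $\psi_0$-contributions collect into $\sum_{j=1}^{\kappa}\psi_0(\pm(-1)^jv_{j-1}/v_j)$, the arguments $v_{j-1}/v_j<1$ landing in the region where $\psi_0$ is small. The $\kappa$ iteration steps furnish $\kappa$ copies of $\zeta(\tfrac12)^2$, and the base value $M_0^*(\pm\,\cdot\,,1)$, reached once the modulus has dropped to $1$ and evaluated by means of the $q=1$ case (valid by the remark that Theorems~\ref{mt} and~\ref{mtc} continue to hold when $a=1$ or $q=1$), supplies the last one, yielding the factor $\kappa+1$.

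The main obstacle is the primality hypothesis. The reciprocity of Theorem~\ref{mtc} is available only when both arguments are distinct primes, whereas the intermediate moduli $m_2,m_3,\dots$ generated by the Euclidean algorithm are typically composite, so the reciprocity step cannot legitimately be applied to $M_0^*$ itself beyond the first stage. The resolution, as anticipated in Young's remark quoted above, is to run the entire recursion not on $M_0^*$ but on an auxiliary intermediate function obeying the same $N=0$ reciprocity for arbitrary integer moduli and agreeing with $M_0^*$ exactly when the modulus is prime; the assumption that $q$ is prime then enters only at the top level, to identify the initial term with $M_0^*(\pm a,q)$. Establishing this all-integers reciprocity is the crux of the argument. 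The remaining difficulty is purely bookkeeping: one must track with care how the sign of the numerator flips under each reduction and reciprocity step, how this interacts with the parity of $\kappa$ and with the (non-unique) choice of continued-fraction representation, and how the base term accounts for the final $\zeta(\tfrac12)^2$ and for the boundary values of the two $r$-sums, so as to reproduce the summation conditions and signs in \eqref{prefo1t} exactly.
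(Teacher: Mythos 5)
Your high-level strategy is the right one and matches the paper's: iterate the $N=0$ reciprocity along the Euclidean algorithm, with primality entering only at the endpoints, via an intermediate function that obeys the reciprocity for \emph{all} integer moduli. But you leave exactly that intermediate function unconstructed, declaring it ``the crux'' -- whereas it is already available in the paper. It is the central value of the Estermann function: by Corollary~\ref{ctff} (eq.~\eqref{adae}) one has $M_0^*\pr{\pm a,q}=\frac12(1-i)D\pr{\tfrac12,0;\pm\tfrac aq}+\frac12(1+i)D\pr{\tfrac12,0;\mp\tfrac aq}$ for $q$ prime, and Proposition~\ref{cfd}/Corollary~\ref{mcfd} (eq.~\eqref{ftr}) give the $N=0$ reciprocity for $D_0(\tfrac12,\pm\tfrac aq)$ for arbitrary coprime $a,q$. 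The paper's Lemma~\ref{lmfil} runs your recursion on $D_0(\tfrac12,\cdot)$ and Theorem~\ref{Ypo} then follows by~\eqref{adae}. So this gap is fillable from results you had in hand, but as written your proposal defers its central lemma rather than proving the theorem.

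The second gap is not mere bookkeeping: your identification $m_i=v_{\kappa-i}$ of the Euclidean remainders of $(q,a)$ with the convergent denominators of $a/q$ read backwards is false in general. The remainders are the continuants of the \emph{tails}, i.e.\ $m_i$ is the denominator of $[0;b_{i+1},\dots,b_\kappa]$; for instance $a/q=7/10=[0;1,2,3]$ has remainders $10,7,3,1$ while $(v_0,v_1,v_2,v_3)=(1,1,3,10)$. Consequently, iterating directly on $a/q$ produces the exact ratio terms $\pr{m_{i-1}/m_i}^{\frac12}\pr{\log(m_{i-1}/m_i)+\cdots}$ with $m_{i-1}/m_i=[b_i;b_{i+1},\dots,b_\kappa]$ -- a true but \emph{different} exact identity from~\eqref{prefo1t}, whose terms involve $v_j/v_{j-1}=[b_j;b_{j-1},\dots,b_1]$; the two agree only up to $O(\kappa)$, which suffices for Corollary~\ref{Ypc} but not for the exact theorem. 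The idea you are missing is the paper's inversion trick: start the recursion at $\tfrac mq:=(-1)^{\kappa+1}\tfrac{\overline a}{q}$, whose continued fraction is the reversed word $[0;b_\kappa,\dots,b_1]$ (since $av_{\kappa-1}\equiv(-1)^{\kappa+1}\bmod q$), so that the Euclidean remainders are exactly $v_\kappa,v_{\kappa-1},\dots,v_0$ and the ratios come out as $v_j/v_{j-1}$; then convert back at the end using $D_0(\tfrac12,\pm\tfrac mq)=D_0(\tfrac12,\pm(-1)^{\kappa+1}\tfrac aq)$, which follows from~\eqref{tffe2} or the functional equation of $D_0$. Your sign analysis ($r_-$ versus $r_+$ under the alternation $\pm\mapsto\mp$, and the count $\kappa+1$ of $\zeta(\tfrac12)^2$ terms from $\kappa$ steps plus $D_0(\tfrac12,\tfrac01)=\zeta(\tfrac12)^2$) is correct, but it is carried out on the wrong remainder sequence until this reversal is incorporated.
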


\begin{corol}\label{Ypc}
Let $a,q\in\Z_{>0}$ with $q$ prime. Let $[b_0;b_1,\cdots,b_{\kappa}]$ be the continued fraction expansion of $\frac aq$. Then
\es{\label{prr}
M(a,q)=\sum_{\substack{j=1,\\j\tn{ odd}}}^{\kappa}b_j^{\frac12}(\log b_j +\gamma -\log 8\pi)-\frac\pi 2\sum_{\substack{j=1,\\j\tn{ even}}}^{\kappa}b_j^{\frac12}+O({\kappa})\\
}
and
\est{%\label{prr2}
&\frac{q^{\frac12}}{\varphi(q)}\sumstar_{\substack{\chi\mod q,\\\chi(-1)=\pm1}} \pmd{L\pr{\frac12,\chi}}^2\chi(a)=\pm \frac12\sum_{j=1}^{\kappa}(\pm 1)^jb_j^{\frac12}(\log b_j +\gamma -\log 8\pi \mp\frac {\pi }2)+O({\kappa}).\\
}
\end{corol}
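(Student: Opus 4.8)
The plan is to deduce both formulae directly from Theorem~\ref{Ypo}, the only genuine work being the passage from the ratios $v_j/v_{j-1}$ of consecutive partial denominators to the partial quotients $b_j$ themselves. First I would record the elementary relation between $M(a,q)$ and $M_0^*(a,q)$: comparing the two definitions at $j=0$ and using $L(\tfrac12,\overline\chi)=\overline{L(\tfrac12,\chi)}$ gives $M_0^*(a,q)=M(a,q)+\frac{\zeta(1/2)^2}{\varphi(q)}(2q^{\frac12}-2)$, and since $q$ is prime the correction equals $\frac{2\zeta(1/2)^2}{q^{\frac12}+1}=O(1)$. Thus $M(\pm a,q)=M_0^*(\pm a,q)+O(1)$, and it suffices to analyse the right-hand side of~\eqref{prefo1t} for both choices of sign. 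Since $\chi(a)$ depends only on $a$ modulo $q$, I may assume $0<a<q$ with $(a,q)=1$, so that $b_0=0$ and $\kappa\geq1$, and the correction $O(1)$ is absorbed into $O(\kappa)$.

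The key step is the approximation of $v_j/v_{j-1}$ by $b_j$. Writing $v_j=b_jv_{j-1}+v_{j-2}$ with $v_{-1}=0$ (so that $v_1/v_0=b_1$ exactly) and $0\leq v_{j-2}/v_{j-1}<1$ for $j\geq2$, one has $v_j/v_{j-1}=b_j+\theta_j$ with $\theta_j\in[0,1)$, whence
\[
\pr{\frac{v_j}{v_{j-1}}}^{\frac12}=b_j^{\frac12}+O(b_j^{-\frac12}),\qquad \log\frac{v_j}{v_{j-1}}=\log b_j+O(b_j^{-1}).
\]
Since $b_j\geq1$, multiplying these out shows that each summand of~\eqref{prefo1t} differs from the corresponding summand with $v_j/v_{j-1}$ replaced by $b_j$ by $O(1)$: the cross term $b_j^{\frac12}\cdot O(b_j^{-1})=O(b_j^{-\frac12})$ and the term $O(b_j^{-\frac12})\log b_j$ are both bounded for $b_j\geq1$. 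Summing the $O(1)$ errors over the at most $\kappa$ indices, together with $\zeta(1/2)^2(\kappa+1)=O(\kappa)$ and $\sum_{j}\psi_0(\pm(-1)^jv_{j-1}/v_j)=O(\kappa)$ (each argument lies in $[-1,1]$, where the continuous function $\psi_0$ is bounded), yields from the $+$ sign case of Theorem~\ref{Ypo} the first assertion, and from the $-$ sign case the companion formula
\[
M(-a,q)=\sum_{\substack{j=1,\\ j\tn{ even}}}^{\kappa}b_j^{\frac12}(\log b_j+\gamma-\log8\pi)-\frac\pi2\sum_{\substack{j=1,\\ j\tn{ odd}}}^{\kappa}b_j^{\frac12}+O(\kappa).
\]

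For the second statement I would split by parity using $\chi(-a)=\chi(-1)\chi(a)$: summing and differencing the defining sums gives
\[
\frac{q^{\frac12}}{\varphi(q)}\sumstar_{\substack{\chi\mod q,\\ \chi(-1)=\pm1}}|L(\tfrac12,\chi)|^2\chi(a)=\frac{M(a,q)\pm M(-a,q)}2.
\]
Substituting the two formulae above and grouping by $j$, the even and odd indices combine neatly: in the $+$ case every index contributes $b_j^{\frac12}(\log b_j+\gamma-\log8\pi-\tfrac\pi2)$, while in the $-$ case index $j$ contributes $(-1)^{j+1}b_j^{\frac12}(\log b_j+\gamma-\log8\pi+\tfrac\pi2)$; these are exactly $\pm\frac12\sum_{j=1}^{\kappa}(\pm1)^jb_j^{\frac12}(\log b_j+\gamma-\log8\pi\mp\tfrac\pi2)+O(\kappa)$, as claimed.

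The only point requiring care---and the place where the argument could break down if handled carelessly---is ensuring that the per-term error in the continued-fraction approximation is genuinely $O(1)$ \emph{uniformly} in $j$, including the interaction between the $b_j^{\frac12}$ and $\log b_j$ factors (small $b_j$ must not spoil the bound, nor large $b_j$ the $b_j^{-\frac12}\log b_j$ term). Once this uniform bound is in hand, summing over the $\kappa+1$ convergents produces the stated $O(\kappa)$ and the remainder of the proof is sign bookkeeping.
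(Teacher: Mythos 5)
Your proposal is correct and follows essentially the same route as the paper: the paper likewise deduces Corollary~\ref{Ypc} from Theorem~\ref{Ypo} by the approximation $v_j/v_{j-1}=b_j+O(1)$ exactly as in the proof of Corollary~\ref{lmfic} (where the cross terms $b_j^{-\frac12}\log b_j$ are absorbed into per-term $O(1)$ errors), together with the parity identities $\frac12 M_0^*(a,q)\pm\frac12 M_0^*(-a,q)$ picking out the even/odd character sums. Your explicit computation of the $O(q^{-\frac12})$ discrepancy between $M$ and $M_0^*$ and your sign bookkeeping both match the paper's (more tersely stated) argument.
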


Corollary~\ref{Ypc}, which should be seen as the natural generalization of~\eqref{afcr1}, is particularly interesting as it identifies exactly the values of $a$ for which the twisted moments $M(a,q)$ are very large. Indeed these large moments (e.g. satisfying $|M(a,q)|\gg q^{\frac14+\eps}$) correspond to the values of $a$ for which the continued fraction expansion of $\frac aq$ contains a large coefficient (e.g. $\frac aq= [0;b_1,\dots,b_{\kappa}]$ with some $b_i \gg q^{\frac12+\eps}$) and thus to the $\frac aq$ which are ``very close'' to a rational number with ``small'' denominator.

Corollary~\ref{Ypc} can also be used to investigate the second moment of Dirichlet L-functions with two twists:
\est{
M_\pm (h,k;q):=\frac{q^{\frac12}}{\varphi(q)}\sumstar_{\substack{\chi\mod q,\\\chi(-1)=\pm1}} \pmd{L\pr{\frac12,\chi}}^2\chi(h)\overline\chi(k).
} 
The problem of finding an asymptotic for $M_\pm (h,k;q)$ was first considered by Selberg~\cite{Sel}, who obtained the asymptotic formula
\es{\label{afhk}
M_\pm (h,k;q)\sim \frac12 \pr{\frac{q}{hk}}^\frac12\pr{\log \frac q{hk} +\gamma -\log 8\pi \mp\frac {\pi }2}
}
 in the case $hk\max(h^2,k^2)=o(q\log^2 q)$ (and $q$ prime). Iwaniec and Sarnak~\cite{IS} considered the same problem in their paper on non-vanishing of the central value of Dirichlet L-functions~\cite{IS}, showing that the asymptotic formula~\eqref{afhk} holds on average for $h,k\ll q^{\frac12-\eps}$. 

Using Corollary~\ref{Ypc} (and a simple expression for the continued fraction expansion of $\frac {h\overline k}q$, cf. Lemma~\ref{hat} below), we are able to improve upon the result of Selberg, extending the asymptotic formula~\eqref{afhk} to the range $hk\max(h,k)=o(q)$. %Furthermore we show that this range is actually best possible (on average, however, one expects that~\eqref{afhk} holds on the wider range $h,k\ll q^{1-\eps}$). 

\begin{corol}\label{csb}
Let $q$ be a prime and let $1\leq h,k<q$ with $(h,k)=1$. Then, if $q\geq 4hk$ we have
\es{\label{csbf}
M_\pm (h,k;q)= \frac12 \pr{\frac{q}{hk}}^\frac12\pr{\log \frac q{hk} +\gamma -\log 8\pi \mp\frac {\pi }2}+O((h+k)^{\frac12}\log q).%%%%%%
} 
%Moreover, for all $H,K\geq1$ with $H<K$ and all sufficiently large $Q\geq 1$ such that $C^2\log (Q+1) \ll 2C HK< Q$, there exist $h\in[H/2,H]$, $k\in[K/2,K]$ and $q\in[Q/2,Q]$ such that
%\est{
%\pmd{M_\pm (h,k;q)- \frac12 \pr{\frac{q}{hk}}^\frac12\pr{\log \frac q{hk} +\gamma -\log 8\pi \mp\frac {\pi }2}}\gg (h+k)^{\frac12}\log q
%}
%and the ``big $O$'' cannot be replaced by ``little $o$''.
\end{corol}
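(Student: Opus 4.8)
The plan is to reduce the two-twist moment to the single-twist moment already controlled in Corollary~\ref{Ypc}, and then to extract the answer from the continued fraction of $h\overline{k}/q$ furnished by Lemma~\ref{hat}. Since $q$ is prime, every non-principal character modulo $q$ is primitive, and writing $\overline{k}$ for the inverse of $k$ modulo $q$ we have $\chi(h)\overline{\chi}(k)=\chi(h\overline{k})$. Hence, setting $a\equiv h\overline{k}\pmod{q}$ with $1\le a<q$, the quantity $M_\pm(h,k;q)$ is exactly the sum $\frac{q^{1/2}}{\varphi(q)}\sumstar_{\chi(-1)=\pm1}|L(\tfrac12,\chi)|^2\chi(a)$ appearing in the second formula of Corollary~\ref{Ypc}. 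Applying that formula writes $M_\pm(h,k;q)$, up to an error $O(\kappa)$, as a signed sum of terms $b_j^{1/2}\bigl(\log b_j+\gamma-\log 8\pi\mp\tfrac\pi2\bigr)$ over the partial quotients $b_j$ of $a/q$; since the denominators $v_j$ grow at least geometrically one has $\kappa\ll\log q$, so this error is already $O(\log q)$.

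Next I would locate the unique dominant partial quotient. From $ak\equiv h\pmod{q}$ one gets $\tfrac aq=\tfrac mk+\tfrac{h}{qk}$ with $m:=(ak-h)/q\in\Z$, and a short divisibility check using $(h,k)=1$ shows $(m,k)=1$. The hypothesis $q\ge 4hk$ gives
\[ \Bigl|\tfrac aq-\tfrac mk\Bigr|=\tfrac{h}{qk}\le \tfrac1{4k^2}<\tfrac1{2k^2}, \]
so by Legendre's criterion $m/k$ is a convergent of $a/q$, say $m/k=p_{j_0}/v_{j_0}$ with $v_{j_0}=k$. Writing the convergent error as $\bigl|\tfrac aq-\tfrac{p_{j_0}}{v_{j_0}}\bigr|=\bigl(v_{j_0}(v_{j_0}\alpha_{j_0+1}+v_{j_0-1})\bigr)^{-1}$ with $\alpha_{j_0+1}$ the complete quotient and solving, one finds $\alpha_{j_0+1}=\tfrac{q}{hk}-\tfrac{v_{j_0-1}}{k}$, whence, using $v_{j_0-1}<k$ and $q\ge 4hk$,
\[ b_{j_0+1}=\frac{q}{hk}+O(1),\qquad b_{j_0+1}^{1/2}=\Bigl(\frac{q}{hk}\Bigr)^{1/2}+O(1). \]

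It then remains to show that every other partial quotient contributes only to the error term, and this is the crux. Here I would invoke Lemma~\ref{hat}, which gives the continued fraction of $h\overline{k}/q$ explicitly: apart from the large term $b_{j_0+1}\asymp q/(hk)$, its partial quotients are (up to order and reversal) those of $h/k$, organised into an initial block carrying the denominators up from $1$ to $k$ and a terminal block carrying them from $\asymp q/h$ up to $q$. Combined with the elementary estimate $\sum_j b_j^{1/2}\ll v_n^{1/2}$ for any finite continued fraction with final denominator $v_n$ (a consequence of $\prod_j b_j\le v_n$ together with $\kappa\ll\log v_n$), applied to each block separately so that the effective final denominators are $k$ and $\asymp h$, this yields $\sum_{j\ne j_0+1}b_j^{1/2}\ll \sqrt h+\sqrt k\ll(h+k)^{1/2}$. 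Since each $\bigl|\log b_j+\gamma-\log 8\pi\mp\tfrac\pi2\bigr|\ll\log q$, the total contribution of all these terms is $O\bigl((h+k)^{1/2}\log q\bigr)$.

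Finally I would assemble the estimate. The large term alone produces
\[ \tfrac12\Bigl(\tfrac{q}{hk}\Bigr)^{1/2}\Bigl(\log\tfrac{q}{hk}+\gamma-\log 8\pi\mp\tfrac\pi2\Bigr)+O(\log q), \]
because $\log b_{j_0+1}=\log(q/hk)+O(hk/q)$ and $b_{j_0+1}^{1/2}=(q/hk)^{1/2}+O(1)$, so the discrepancy in both factors is absorbed into $O(\log q)$; adding the $O((h+k)^{1/2}\log q)$ from the other partial quotients and the $O(\kappa)=O(\log q)$ from Corollary~\ref{Ypc} gives~\eqref{csbf}. The genuine difficulty of the whole argument is the structural input of Lemma~\ref{hat}: a priori the tail partial quotients have denominators as large as $q$, and only the symmetry of the expansion of $h\overline{k}/q$ prevents their square-root sum from being as large as $\sqrt q$ rather than $\sqrt h$. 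A secondary point requiring care is the sign in the minus case, where the large partial quotient must sit at an index of the correct parity for $(\pm1)^{j}$ to reproduce the leading $+\tfrac12$; this too is settled by the explicit form of the expansion (or by passing between the two continued fraction representations), rather than by the mere existence of the convergent $m/k$.
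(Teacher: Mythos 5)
Your proposal is correct and takes essentially the same route as the paper: reduce to the single-twist moment via $\chi(h)\overline\chi(k)=\chi(h\overline k)$, apply Corollary~\ref{Ypc} to $a/q$ with $a\equiv h\overline k \pmod q$, read off the single large partial quotient $\frac{q}{hk}+O(1)$ from Lemma~\ref{hat} (whose location you re-derive, redundantly but correctly, via Legendre's criterion), and bound the two flanking blocks trivially using $\prod_j b_j\leq v_n$, exactly as in the paper's ``bound trivially the two series'' step. The only slip is cosmetic: those flanking blocks consist of the partial quotients of $\pg{-\tfrac{h\overline q}{k}}$ and (reversed) of $\pg{-\tfrac{k\overline q}{h}}$ as in Lemma~\ref{hat}, not ``those of $h/k$'', but since your estimate uses only their denominators $k$ and $h$ --- which you state correctly --- the argument is unaffected.
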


In the case when $h,k$ and $q$ are all primes Theorem~\ref{Ypo} implies also a $3$-terms relation.
\begin{corol}\label{c3t}
Let $h,k,q$ be different primes and let $q\geq 4hk$. Then
\est{
M_\pm(h, k,q)
&=\pm M_\pm(h, q;k)\pm M_\pm(k, q;h)+\frac12\pr{\frac q{hk}}^{\frac12}(\log \frac{q}{hk} +\gamma -\log 8\pi\mp\frac {\pi }2)+O(\log  q).\\
}
\end{corol}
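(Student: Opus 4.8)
The plan is to read Corollary~\ref{c3t} as the $L$-function analogue of Rademacher's three-term relation for Dedekind sums, and to prove it at the level of continued fractions through Corollary~\ref{Ypc}. Since the character sum defining $M_\pm(h,k;q)$ depends only on the residue $h\overline k\pmod q$, Corollary~\ref{Ypc} applies and expresses $M_\pm(h,k;q)$ as $\pm\tfrac12\sum_{j=1}^{\kappa}(\pm1)^j b_j^{1/2}\pr{\log b_j+\gamma-\log 8\pi\mp\tfrac\pi2}+O(\kappa)$, where $[0;b_1,\dots,b_{\kappa}]$ is the continued fraction of $h\overline k/q$; in the same way $M_\pm(h,q;k)$ and $M_\pm(k,q;h)$ are governed by the continued fractions of $h\overline q/k$ and $k\overline q/h$, all three moduli $q,k,h$ being prime. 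The whole identity will follow once these three continued fractions are matched against one another.

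Next I would use Lemma~\ref{hat} together with the hypothesis $q\geq 4hk$ to pin down the shape of the continued fraction of $h\overline k/q$. Under this hypothesis the expansion contains a single large partial quotient $b_{j_0}\approx q/(hk)\geq 4$, occurring at a position $j_0$ whose parity is fixed by the lemma, and the blocks of partial quotients lying before and after $b_{j_0}$ reproduce the continued fractions of $k\overline q/h$ and $h\overline q/k$ (the passage between a residue and its inverse introducing the usual reversal of the partial quotients). Fed into the formula of Corollary~\ref{Ypc}, the single quotient $b_{j_0}$ contributes exactly the main term $\tfrac12\pr{q/hk}^{1/2}\pr{\log(q/hk)+\gamma-\log 8\pi\mp\tfrac\pi2}$ of~\eqref{csbf}, the parity of $j_0$ and the factor $(\pm1)^{j_0}$ combining to produce the correct sign $\mp\tfrac\pi2$ for each of the two parity classes of characters.

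I would then split the sum of Corollary~\ref{Ypc} for $M_\pm(h,k;q)$ at $j=j_0$. After the index shift and the parity reversal coming from the inversion step, the block of quotients following $b_{j_0}$ reassembles precisely the sum computing $M_\pm(h,q;k)$, and the block preceding $b_{j_0}$ that of $M_\pm(k,q;h)$; the interplay between the sign $\pm$ and the parity of the indices in $(\pm1)^j$ is exactly what attaches the prefactors $\pm$ to $M_\pm(h,q;k)$ and $M_\pm(k,q;h)$ on the right-hand side of the stated formula. Collecting the large quotient and the two flanking blocks yields the claimed identity, and the residual error absorbs the three $O(\kappa)$ terms from the applications of Corollary~\ref{Ypc} together with the finitely many partial quotients adjacent to the splitting point; since a fraction with denominator at most $q$ has $\kappa\ll\log q$ partial quotients, this residual is $O(\log q)$.

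The main obstacle is the decomposition in the third step: proving that the continued fraction of $h\overline k/q$ really does factor as the dominant quotient $b_{j_0}$ flanked by the (reversed) expansions of $k\overline q/h$ and $h\overline q/k$, with completely correct bookkeeping of the index shifts, the reversals, and the resulting parities in the factors $(\pm1)^j$. Making the flanking blocks assemble into exactly $\pm M_\pm(h,q;k)$ and $\pm M_\pm(k,q;h)$---rather than into some mixture of the two parity classes---is the delicate point, and it is precisely the hypothesis $q\geq 4hk$ that guarantees the expansion has this clean shape with a single dominant quotient.
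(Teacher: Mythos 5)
Your proposal follows essentially the same route as the paper's proof: apply Corollary~\ref{Ypc} to the continued fraction of $\{h\overline k/q\}$, use Lemma~\ref{hat} (under $q\geq 4hk$) to split it as the dominant quotient $q/(hk)+O(1)$ flanked by the expansion of $\{-h\overline q/k\}$ and the reversal of that of $\{-k\overline q/h\}$, extract the main term from the dominant quotient, and absorb the errors via $\kappa\ll\log q$. One correction to your bookkeeping at the delicate point you flag: since $r$ and $s$ are even, the reversal leaves the parity pattern $(\pm1)^j$ intact, so the flanking blocks assemble directly into $M_\pm(-h,q;k)$ and $M_\pm(-k,q;h)$ (note the minus signs in Lemma~\ref{hat}), and the prefactors $\pm$ in the three-term relation come not from index parity but from $\chi(-1)=\pm1$, giving $M_\pm(-h,q;k)=\pm M_\pm(h,q;k)$, while the primality of $h$ and $k$ is what licenses applying Corollary~\ref{Ypc} with moduli $k$ and $h$.
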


\begin{remark}
Corollary~\ref{c3t} indicates clearly that the condition $hk\max(h,k)=o( q)$ is necessary for the asymptotic formula~\eqref{afhk} to hold. More precisely, one can show that~\eqref{afhk} doesn't hold on the range $h\asymp k\asymp q^{\frac13}$ (on average, however, one expects that this asymptotic formula holds true on the wider range $h,k\ll q^{1-\eps}$). Indeed, if we take $h,k,q$ to be primes with 
\es{\label{afv}
k < U h <2k,\qquad q^{\frac13}<k<2q^{\frac13},\qquad q\equiv h\mod k,
}
where $U$ is a sufficiently large constant, then Corollary~\ref{c3t} implies
\es{\label{hg}
M_\pm(h, k,q)
&=\frac {k^\frac12}2\log k+\frac12\pr{\frac q{hk}}^{\frac12}\log \frac{q}{hk}+O(h\log h+Uq^{\frac13})\geq\frac12(1+\frac 1U)\pr{\frac q{hk}}^{\frac12}\log \frac{q}{hk},\\
}
for $q$ large enough. (The existence of arbitrary large primes $h,k,q$ satisfying~\eqref{afv} is not obvious, but from the proof of Corollary~\ref{hg} one easily sees that~\eqref{hg} holds also without the assumption of the primality of $h,k$, and so the goal of finding suitable large integers $h,k$ and primes $q$ becomes easily fulfillable.)
\end{remark}

We remark also that periodic functions $f(\frac aq)$ which admit simple expressions in terms of the continued fraction expansion of $\frac aq$ (such as $M(a,q)$, by Corollary~\ref{Ypc}) are strictly related to additive functions of $SL(2,\Z)$, modulo the parabolic elements. In recent years, there have been several works studying the distribution of functions of this kind, see, for example,~\cite{Var,PR,Moz}.  It is possible that a similar approach might work also in the study of the distribution of $M\pr{a,q}$. (Anyway, in a forthcoming work we choose a different route, computing all the moments for $M\pr{a,q}$ by using classical methods of moments of $L$-functions). This would be especially interesting because it would also give a new approach to the $4$-th moment of Dirichlet $L$-functions at the central point. Indeed, by the orthogonality of Dirichlet characters one has that the second moment of $M\pr{a,q}$ is 
\est{
\frac 1q \sum_{a=1}^q M\pr{a,q}^2=\frac 1{\varphi(q)}\sumstar_{\chi \mod q}\pmd{L\pr{\frac12,\chi}}^4.
}
Young~\cite{You11b} gave an asymptotic for the right hand side, combining different methods to handle certain averages of Kloosterman sums. It would be nice to see whether one can give an alternative proof of his result using Theorem~\ref{Ypo}. For the moment, we content ourselves to use Young's result in the opposite direction, proving the following Corollary.
\begin{corol}\label{fccc}
Let $q$ be a prime. For $1\leq a\leq q$, let
\est{
f_{\pm}(a/q)&:= \sum_{j=1}^{\kappa}(\pm 1)^jb_j^{\frac12}(\log b_j +\gamma -\log 8\pi \mp\frac {\pi }2),\\
}
where $[0;b_1,\cdots b_{\kappa}]$ is the continued fraction expansion of $a/q$. Then, 
\est{
\frac1q\sum_{a=1}^qf_{\pm}(a/q)^2=\frac1{\pi^2}(\log q)^4+c_{\pm}(\log q)^3+O(\log^2q),
}
for some real numbers $c_{\pm}$.
\end{corol}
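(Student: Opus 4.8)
The plan is to reduce the second moment of the continued-fraction sum $f_\pm$ to a fourth moment of Dirichlet $L$-functions, for which Young's asymptotic~\cite{You11b} is available, the bridge being orthogonality of characters. Set
\[
M_\pm(a,q):=\frac{q^{\frac12}}{\varphi(q)}\sumstar_{\substack{\chi\mod q,\\\chi(-1)=\pm1}}\pmd{L\pr{\tfrac12,\chi}}^2\chi(a).
\]
By Corollary~\ref{Ypc} we have $f_\pm(a/q)=\pm 2M_\pm(a,q)+E(a,q)$, where $E(a,q)\ll\kappa$ is the error term there; by Theorem~\ref{Ypo} its dominant piece is the deterministic multiple $\zeta(\tfrac12)^2(\kappa+1)$ of the continued-fraction length, the remaining $\kappa$ terms $\psi_0(\cdot)$ being bounded. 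Squaring and averaging over $a$ gives
\[
\frac1q\sum_{a=1}^q f_\pm(a/q)^2=\frac4q\sum_{a=1}^q M_\pm(a,q)^2\pm\frac4q\sum_{a=1}^q M_\pm(a,q)E(a,q)+\frac1q\sum_{a=1}^q E(a,q)^2.
\]

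For the main term I would evaluate $\frac1q\sum_a M_\pm^2$ exactly by orthogonality: since $q$ is prime, $\sum_{a\bmod q}\chi_1(a)\chi_2(a)=\varphi(q)$ precisely when $\chi_2=\overline{\chi_1}$, and $\pmd{L(\tfrac12,\overline\chi)}=\pmd{L(\tfrac12,\chi)}$, so that $\frac1q\sum_a M_\pm(a,q)^2=\frac1{\varphi(q)}\sumstar_{\chi(-1)=\pm1}\pmd{L(\tfrac12,\chi)}^4$. Applying Young's fourth-moment expansion, split into parities by writing the restriction as $\tfrac12(1\pm\chi(-1))$, one needs that the parity-twisted fourth moment $\frac1{\varphi(q)}\sumstar\chi(-1)\pmd{L(\tfrac12,\chi)}^4$ is of strictly lower order, so that each parity carries exactly half of the total leading term $\frac1{2\pi^2}(\log q)^4$. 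This yields $\frac4q\sum_a M_\pm^2=\frac1{\pi^2}(\log q)^4+(\text{degree-}3\text{ polynomial in }\log q)+O(\log^2 q)$, supplying the stated leading term and (part of) the coefficient $c_\pm$.

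The remaining contributions are handled by continued-fraction estimates. Using the deterministic bound $\kappa\ll\log q$ together with the standard moment estimate $\frac1q\sum_a\kappa^2\ll\log^2 q$, one gets $\frac1q\sum_a E^2\ll\log^2 q$, which is admissible.

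The main obstacle is the cross term $\frac1q\sum_a M_\pm E$. Cauchy--Schwarz alone gives only $\ll(\frac1q\sum M_\pm^2)^{1/2}(\frac1q\sum E^2)^{1/2}\ll\log^3 q$, which is too weak for the claimed $O(\log^2 q)$ remainder. To do better I would exploit the exact cancellation $\sum_{a=1}^q M_\pm(a,q)=0$ (orthogonality again, as primitive characters mod the prime $q$ are non-principal), which lets me recentre $E$ by any constant; since the dominant part of $E$ is the multiple $\zeta(\tfrac12)^2(\kappa+1)$ of $\kappa$, the cross term reduces to understanding $\frac1q\sum_a M_\pm(a,q)\kappa(a,q)$. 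Expanding the length function in characters, $\sum_a M_\pm(a,q)\kappa(a,q)=\frac{q^{\frac12}}{\varphi(q)}\sumstar_{\chi(-1)=\pm1}\pmd{L(\tfrac12,\chi)}^2\sum_a\chi(a)\kappa(a,q)$, the problem becomes one of bounding the non-principal Fourier coefficients of the continued-fraction-length function, i.e. the correlation between the twisted second moment and continued-fraction statistics. The delicate point is to show that this correlation produces at most a clean $(\log q)^3$ term, to be absorbed into $c_\pm$, with remainder genuinely $O(\log^2 q)$; this is where the fine arithmetic of the continued fraction (and the alternating signs $(\pm1)^j$ in $f_\pm$, which force cancellation on averaging over $a$) must be used, and it is the step I expect to require the most care.
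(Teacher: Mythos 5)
Your treatment of the main term coincides with the paper's: orthogonality over $a \bmod q$ turns $\frac4q\sum_a M_\pm(a,q)^2$ into the parity-restricted fourth moment $\frac{4}{\varphi(q)}\sumstar_{\chi(-1)=\pm1}\pmd{L\pr{\frac12,\chi}}^4$, and Young's theorem finishes; your bound $\frac1q\sum_a E(a,q)^2\ll\log^2q$ via $\kappa\ll\log q$ also matches. The genuine gap is the cross term, precisely where you stop. Your proposed repair --- recentring using $\sum_{a=1}^q M_\pm(a,q)=0$ and then controlling $\frac1q\sum_a M_\pm(a,q)\kappa(a,q)$ through the non-principal Fourier coefficients of the length function --- is left unproven, and recentring by a constant would not suffice anyway: the error in Corollary~\ref{Ypc} is not a function of $\kappa$ alone, since by Theorem~\ref{Ypo} it contains the $\kappa$ fluctuating terms $\psi_0\pr{\pm(-1)^j v_{j-1}/v_j}$ as well as the discrepancies between $b_j$ and $v_j/v_{j-1}$, so after recentring you would still face an open-ended correlation problem between continued-fraction statistics and character sums --- much harder than the corollary itself.

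The paper avoids all of this with one elementary input you are missing: a first-moment bound (Lemma~\ref{lfc}), namely $\frac1q\sum_{a=1}^q\pmd{M(a,q)}\ll\log q$, proved directly from the approximate functional equation: $M(a,q)\ll q^{\frac12}\sum_{n\equiv\pm am\mod q}(mn)^{-\frac12}\min\pr{1,(q/mn)^{100}}+\cdots$, and averaging over $a$ collapses the congruence condition, leaving $q^{-\frac12}\sum_{mn\leq q}(mn)^{-\frac12}\ll\log q$. Note this is strictly stronger than what Cauchy--Schwarz from the second moment yields ($\log^2q$), because $\pmd{M(a,q)}$ is typically far below its $L^2$ norm, the fourth moment being dominated by the rare $a$ whose continued fraction has a huge partial quotient. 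Combined with the pointwise bound $\pmd{E(a,q)}\ll\kappa\ll\log q$, which you already have in hand, the cross term is $\ll\log q\cdot\frac1q\sum_a\pmd{M_\pm(a,q)}\ll\log^2q$, which is admissible, and no correlation information about $\kappa(a,q)$ is needed. One smaller point in your favour: the requirement that the parity-twisted fourth moment $\frac1{\varphi(q)}\sumstar\chi(-1)\pmd{L\pr{\frac12,\chi}}^4$ contribute only to the $(\log q)^3$ and lower terms is implicit in the paper's one-line appeal to Young's theorem as well, so flagging it explicitly is a merit of your write-up rather than a defect.
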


We also remark that Corollary~\ref{Ypc} (and Lemma~\ref{lmfil} below) could also be used to give other results on continued fractions. For example, when combined with Burgess' bound  (or with estimates for Kloosterman sums, such as those in~\cite{DI}), Corollary~\ref{Ypc} gives non-trivial bounds for the average value of $f_{\pm}(a/q)$ as $a$ varies in short intervals.

Our approach in proving the reciprocity formula for $M(a,q)$ is different from that of Conrey and Young. Indeed our first step consists in relating $M\pr{a,q}$ to the Estermann function at the central point (for $q$ prime).  We remind that the Estermann function is defined as 
\est{
D\pr{s,\alpha,\tfrac aq}:=\sum_{n=1}^\infty\e{\tfrac {na}q}\frac{\sigma_\alpha(n)}{n^s},
} 
where $\e{z}:=e^{2\pi i z}$ and $\sigma_{\alpha}(n):=\sum_{d|n}d^\alpha$, for $\Re(s)>\max(1,1+\Re(\alpha))$ and extendable to an analytic function on $\C\setminus\{1,1+\alpha\}$.

The Estermann function is an extremely useful tool when studying moments of the Riemann zeta-function and of Dirichlet $L$-functions (see, for example,~\cite{BCH-B,Iwa,You11b}). This is mainly because it satisfies a functional equation, which is essentially equivalent to Voronoi's summation formula. Moreover, the values of the Estermann function at $s=0$ are related to important objects in number theory. In particular, one has (see, for example,~\cite{BC13b}) that 
\es{\label{ffsd}
\lim_{\alpha\rightarrow -1} \pr{D\pr{0,\alpha;\tfrac hk}+\tfrac12\zeta(-\alpha)}=\pi i \sss\pr{\tfrac hk},
}
where
\est{
\sss\pr{\frac hk}:=\sum_{m=1}^{k-1}\pr{\pr{\frac {mh}{k}}}\pr{\pr{\frac mk}}
}
is the Dedekind sum. (Here $((x)):=x-[x]-\frac12$ if $x\notin\Z$ and $((x))= 0$, if $x\in\Z$.) Moreover, 
\es{\label{ffcd}
D\pr{0,0;\tfrac hk}=\tfrac14+\tfrac i2\cc_0\pr{\tfrac hk}=\tfrac14-\tfrac i2 V\pr{\tfrac {\overline h}k},
}
where 
\est{
\cc_0\pr{\frac hk}:=-\sum_{m=1}^{k-1}\frac{m}{k}\cot\pr{\frac {\pi mh}{k}}
}
is the cotangent sum studied in~\cite{BC13a} and $V$ is the Vasyunin sum ($\overline h$ denotes the inverse of $h \mod k$), appearing in the Nyman-Beurling criterion for the Riemann hypothesis. 

Both $\cc_0$ and $\sss$ satisfy reciprocity relations, in Proposition~\ref{cfd} below we highlight yet another symmetry of $D$, obtained by using methods similar to those of~\cite{BC13a}. We will then use this result to deduce Theorems~\ref{mt} and~\ref{mtc} via the following relation between twisted moments of Dirichlet $L$-function and the Estermann function.

\begin{theo}\label{tff}
Let $q$ be a prime and let $s,z\in\C$. Let
\est{
M^*\pr{s,z;a,q}&:=\frac{q^{s-z}}{\varphi(q)}\sumstar_{\chi\mod q} L\pr{s-z,\overline\chi}L\pr{s+z,\chi}\chi(a)+\mbox{}\\
&\quad+\frac{q^{-z}}{\varphi(q)}\pr{q^{1-s}+q^s-q^{z}-q^{-z}}\zeta(s+z)\zeta(s-z).
}
Then,
\es{\label{tffe1}
M^*\pr{s,z;a,q}
&=\frac{\Gamma(1-s+z)}{(2\pi)^{1+z-s}}\bigg(e^{-\pi i\frac{1-s+z}2}D\pr{s+z,2s-1;\tfrac aq}+e^{\pi i\frac{1-s+z}2}D\pr{s+z,2s-1;-\tfrac aq}\bigg).\\
}
Equivalently,
\es{\label{tffe2}
D\pr{s+z,2s-1;a/q}=(2\pi)^{z-s}\Gamma(s-z)\pr{e^{\pi i\frac{s-z}2} M^*\pr{s,z;a,q}+e^{-\pi i\frac{s-z}2}M^*\pr{s,z;-a,q}}.
}
\end{theo}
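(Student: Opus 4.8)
The plan is to establish~\eqref{tffe1} by feeding the functional equation of the Dirichlet $L$-functions into the character sum and then collapsing it with orthogonality; the equivalent form~\eqref{tffe2} will follow by a purely algebraic inversion. All the manipulations are carried out first in a region of $(s,z)$ where the relevant Dirichlet series converge absolutely, the final identity being extended to all $s,z$ by analytic continuation. Since $q$ is prime, the primitive characters modulo $q$ are exactly the non-principal ones, so the functional equation applies to every $\chi$ occurring in $\sumstar$. I would apply it to the factor $L\pr{s-z,\overline\chi}$, in the asymmetric form
\[
L\pr{s-z,\overline\chi}=\frac{q^{z-s}\,\Gamma(1-s+z)}{(2\pi)^{1+z-s}}\,\tau(\overline\chi)\pr{e^{-\pi i\frac{1-s+z}{2}}+\chi(-1)\,e^{\pi i\frac{1-s+z}{2}}}L\pr{1-s+z,\chi},
\]
which is tailored to reproduce the archimedean factor $\Gamma(1-s+z)/(2\pi)^{1+z-s}$ and the two exponentials on the right of~\eqref{tffe1}. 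The surviving product of $L$-functions telescopes into an Estermann-type Dirichlet series: expanding and collecting divisors gives
\[
L\pr{1-s+z,\chi}L\pr{s+z,\chi}=\sum_{N\geq1}\chi(N)\frac{\sigma_{2s-1}(N)}{N^{s+z}},
\]
with precisely the divisor function $\sigma_{2s-1}$ appearing in $D\pr{s+z,2s-1;\cdot}$.

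Next I would open the Gauss sum $\tau(\overline\chi)=\sum_m\overline\chi(m)\e{m/q}$ and use $\chi(-1)\tau(\overline\chi)=\sum_m\overline\chi(m)\e{-m/q}$, so that the two exponentials become attached to $\e{m/q}$ and $\e{-m/q}$ respectively. Orthogonality over the non-principal characters, in the shape $\sumstar_\chi\chi(aN\overline m)=\varphi(q)\,\mathbf 1_{m\equiv aN\,(q)}-1$ (with $\overline m$ the inverse of $m$ modulo $q$), then collapses the $m$-sum: the main term forces $m\equiv\pm aN\pmod q$, turning $\e{\pm m/q}$ into $\e{\pm aN/q}$ and producing $\sum_{(N,q)=1}\sigma_{2s-1}(N)\e{\pm aN/q}N^{-(s+z)}$, while the residual ``$-1$'' contributes a $\cos\pr{\tfrac\pi2(1-s+z)}$-multiple of $\sum_{(N,q)=1}\sigma_{2s-1}(N)N^{-(s+z)}=\zeta(s+z)\zeta(1-s+z)$, up to Euler factors at $q$.

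The main obstacle is reconciling the coprimality condition. The character sum only sees $N$ coprime to $q$, whereas $D\pr{s+z,2s-1;\pm a/q}$ runs over all $N$; the missing terms with $q\mid N$ (for which $\e{\pm Na/q}=1$), the residual $\cos$-term above, and the explicit correction $\frac{q^{-z}}{\varphi(q)}\pr{q^{1-s}+q^s-q^z-q^{-z}}\zeta(s+z)\zeta(s-z)$ built into the definition of $M^*$ must cancel exactly. This is where the precise shape of that correction is used: the functional equation of $\zeta$, in the form $\frac{2\Gamma(u)}{(2\pi)^u}\cos\pr{\tfrac{\pi u}{2}}\zeta(u)=\zeta(1-u)$ with $u=1-s+z$, turns the $\zeta(1-s+z)$ of the previous step into $\zeta(s-z)$, and a short computation with the local factors at $q$ shows that these leftover pieces complete $\sum_{(N,q)=1}$ into the full sums defining $D\pr{s+z,2s-1;\pm a/q}$, with nothing left over. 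I expect the only delicate point to be this bookkeeping of $q$-factors and of the principal-character correction; it is routine once the correction term is matched against the $q\mid N$ contribution.

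Finally, having established~\eqref{tffe1}, I would deduce~\eqref{tffe2} algebraically: writing~\eqref{tffe1} for $a$ and for $-a$ gives a $2\times2$ linear system in $D\pr{s+z,2s-1;\pm a/q}$ whose determinant is $e^{-\pi i(1-s+z)}-e^{\pi i(1-s+z)}=-2i\sin\pr{\pi(s-z)}$. Inverting it and simplifying by means of $e^{\pm\pi i\frac{1-s+z}{2}}=\pm i\,e^{\mp\pi i\frac{s-z}{2}}$ together with the reflection formula $\Gamma(s-z)\Gamma(1-s+z)=\pi/\sin\pr{\pi(s-z)}$ collapses the prefactor to exactly $(2\pi)^{z-s}\Gamma(s-z)$ and the exponentials to $e^{\pm\pi i\frac{s-z}{2}}$, yielding~\eqref{tffe2}. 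As noted, the chain of identities holds on a half-plane of absolute convergence and then for all $s,z$ by analytic continuation.
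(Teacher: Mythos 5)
Your proposal is correct, but it is organized differently from the paper's proof, essentially reversing the order of the two main steps. The paper applies orthogonality \emph{first} (its Lemma~\ref{ffs}), reducing $M^*\pr{s,z;a,q}$ exactly to the arithmetic sum $A\pr{s,z;-a,q}=\sum_{m\equiv -na\,(q)}(nm)^{-s}(m/n)^{z}$ --- the correction term in $M^*$ being engineered so that the discarded pieces cancel identically --- and only then invokes a functional equation: in Lemma~\ref{ftrr}, $A$ is decomposed through the periodic zeta-function $F$, one factor is expanded in Hurwitz zeta-functions while the other is transformed by Hurwitz's formula, and the double sum recombines into the Estermann function in the strip $1-\Re(z)<\Re(s)<\Re(z)$. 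You instead apply the asymmetric Gauss-sum functional equation to each primitive $L\pr{s-z,\overline\chi}$ first, form the divisor series $\sum_N \chi(N)\sigma_{2s-1}(N)N^{-(s+z)}$, and use orthogonality afterwards, so that Gauss sums and the primality of $q$ enter directly; both routes rest on the same analytic input, since Hurwitz's formula is equivalent to the Dirichlet functional equation you use. I verified the bookkeeping you flagged as the delicate point, and it does close: writing $u=1-s+z$ and $\zeta^{(q)}(w):=(1-q^{-w})\zeta(w)$, the terms with $q\mid N$ contribute $\zeta(s+z)\zeta(u)\pr{q^{-s-z}+q^{s-z-1}-q^{-1-2z}}$, the residual $-1$ from orthogonality contributes (via the Ramanujan sum $c_q(\pm1)=-1$) the quantity $2\cos\pr{\tfrac{\pi u}{2}}\zeta^{(q)}(s+z)\zeta^{(q)}(u)/\varphi(q)$ inside the bracket, and after converting $\zeta(u)$ into $\zeta(s-z)$ by $\zeta(1-u)=\tfrac{2\Gamma(u)}{(2\pi)^u}\cos\pr{\tfrac{\pi u}{2}}\zeta(u)$, the discrepancy is exactly $\tfrac{q^{-z}}{\varphi(q)}\pr{q^{1-s}+q^{s}-q^{z}-q^{-z}}\zeta(s+z)\zeta(s-z)$, matching the definition of $M^*$; your $2\times2$ inversion for~\eqref{tffe2}, with determinant $-2i\sin\pi(s-z)$ and the reflection formula, is the same computation the paper performs. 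What the paper's factorization buys is that the functional-equation step (Lemma~\ref{ftrr}) is valid for all moduli and avoids Gauss sums, isolating the use of primality in the orthogonality lemma; what your route buys is a shorter, self-contained argument at prime $q$ that never introduces the auxiliary function $A$ or the periodic and Hurwitz zeta-functions.
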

\begin{remark}
The statements of Theorem~\ref{tff}, as well as several other formulae in the following sections, have to be interpreted as identities between meromorphic functions.
\end{remark}

In particular, we have the following relations between twisted second moments of $L(s,\chi)$ and special values of the Estermann function.
\begin{corol}\label{ctff}
Let $q$ be prime and let $(a,q)=1$. Then
\es{\label{ctffe1}
M_0^*\pr{a,q}
&=\frac12(1-i)D\pr{\frac12,0;\frac aq}+\frac12(1+i)D\pr{\frac12,0;-\frac aq}\bigg)
%&=(-1)^jq^{-s+j}\Gamma(1-s+j)(2\pi)^{s-1-j}\bigg((-i)^je^{\pi i\frac{1-s}2}D\pr{s+j,2s-1,\frac aq}+(i)^je^{-\pi i\frac{1-s}2}D\pr{s+j,2s-1,-\frac aq}\bigg).\\
}
and
\begin{align}
&\frac{1}{\varphi(q)}\sumstar_{\chi\mod q} \pmd{L\pr{0,\chi}}^2\chi(a)=\frac1{\pi^2}\frac{q}{\varphi(q)}\sumstar_{\substack{\chi\mod q,\\ \chi(-1)=-1}} \pmd{L\pr{1,\chi}}^2\chi(a)=\sss\pr{\tfrac aq} \label{ctffe2},\\
&\frac{q}{\varphi(q)}\sumstar_{\substack{\chi\mod q,\\ \chi(-1)=-1}} L\pr{1,\overline\chi}L\pr{0,\chi}\chi(a)=\frac \pi2 c_0\pr{\tfrac aq}. \label{ctffe3}
\end{align}

\end{corol}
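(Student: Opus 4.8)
The plan is to derive all three identities \eqref{ctffe1}, \eqref{ctffe2}, \eqref{ctffe3} by specializing the meromorphic identity \eqref{tffe1} of Theorem~\ref{tff} at the three points $(s,z)=(\tfrac12,0)$, $(0,0)$ and $(\tfrac12,-\tfrac12)$ respectively, feeding in the known special values of the Estermann function at $s=0$, namely \eqref{ffsd} and \eqref{ffcd}. In each case one first checks that the left side of \eqref{tffe1} reduces to the desired moment: at $(\tfrac12,0)$ one has $M^*(\tfrac12,0;a,q)=M_0^*(a,q)$ directly from the definitions, while at $(0,0)$ and $(\tfrac12,-\tfrac12)$ the character sum collapses onto the odd characters appearing in \eqref{ctffe2}, \eqref{ctffe3}, since $L(0,\chi)=0$ for even primitive $\chi$. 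The recurring obstacle is that at the latter two points various factors develop simple poles, so the identities must be read in the limiting sense of the remark following Theorem~\ref{tff}.

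For \eqref{ctffe1} no poles occur: substituting $s=\tfrac12$, $z=0$ gives $2s-1=0$, $s+z=\tfrac12$, $\Gamma(\tfrac12)=\sqrt\pi$ and $e^{\mp\pi i/4}=\tfrac1{\sqrt2}(1\mp i)$, and the prefactor $\Gamma(\tfrac12)/(2\pi)^{1/2}$ collapses to $\tfrac1{\sqrt2}$; collecting constants reproduces exactly $\tfrac12(1-i)D(\tfrac12,0;\tfrac aq)+\tfrac12(1+i)D(\tfrac12,0;-\tfrac aq)$. The first equality in \eqref{ctffe2} is independent of the Estermann machinery: it is the functional equation $L(0,\chi)=\tfrac{\tau(\chi)}{i\pi}L(1,\overline\chi)$ for odd primitive $\chi$, which upon multiplying by its conjugate and using $\tau(\chi)\tau(\overline\chi)=\chi(-1)q=-q$ gives $|L(0,\chi)|^2=\tfrac{q}{\pi^2}|L(1,\chi)|^2$, and summing against $\chi(a)$ yields the claim.

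The heart of the matter is the second equality of \eqref{ctffe2}. Here $(s,z)=(0,0)$ forces $s+z=0$, $2s-1=-1$, landing on $D(0,-1;\pm a/q)$, which is singular because of the Estermann pole at $s=1+\alpha=0$. I would approach along $z=0$, $s\to0$, and split each $D(s,2s-1;\pm a/q)$ into its polar part $\tfrac{\zeta(2s)/q^{2s}}{-s}$ and its finite part. The exponential weights combine the two polar parts into a factor $2\cos\pr{\tfrac{\pi(1-s)}2}=2\sin\pr{\tfrac{\pi s}2}\sim\pi s$, which cancels the $1/s$ and leaves the finite value $-\pi\zeta(0)=\tfrac\pi2$; after the prefactor $1/(2\pi)$ this reproduces precisely the $\tfrac14$ coming from the $\zeta(0)^2$ term on the left. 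The finite parts, via \eqref{ffsd} and $\sss(-a/q)=-\sss(a/q)$, contribute $2\pi\sss(\tfrac aq)$, whence, after dividing by $2\pi$ and cancelling the two copies of $\tfrac14$, one is left with $\tfrac1{\varphi(q)}\sumstar_{\chi\mod q}|L(0,\chi)|^2\chi(a)=\sss(\tfrac aq)$.

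Finally, for \eqref{ctffe3} I take $(s,z)=(\tfrac12,-\tfrac12)$, so that $s+z=0$, $2s-1=0$ and the Estermann value is the finite $D(0,0;\pm a/q)=\tfrac14\pm\tfrac i2\cc_0(\tfrac aq)$ of \eqref{ffcd}; now, however, the prefactor carries $\Gamma(1-s+z)=\Gamma(0)$ while the left side carries $\zeta(s-z)=\zeta(1)$, both simple poles. Writing $s=\tfrac12+\delta$, $z=-\tfrac12$ and letting $\delta\to0$, I expect the two $-\tfrac1{2\delta}$ poles to match, and matching finite parts to give $\tfrac{q}{\varphi(q)}\sumstar_{\chi(-1)=-1}L(1,\overline\chi)L(0,\chi)\chi(a)=\tfrac\pi2\cc_0(\tfrac aq)-(B_++B_-)$, where $B_\pm=(\partial_u+2\partial_\beta)D(u,\beta;\pm a/q)|_{(0,0)}$ are the first-order Taylor corrections of the Estermann function along the path. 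The main difficulty I anticipate is showing that these spurious derivative terms vanish; I would deduce this from the symmetry $D(s,\alpha;x)=D(s-\alpha,-\alpha;x)$ (equivalently $\sigma_\alpha(n)=n^\alpha\sigma_{-\alpha}(n)$), which, applied to the even combination $E(u,\beta):=D(u,\beta;\tfrac aq)+D(u,\beta;-\tfrac aq)$, gives $E(u,\beta)=E(u-\beta,-\beta)$ and hence, on differentiating at $(0,0)$, forces $(\partial_u+2\partial_\beta)E(0,0)=0$, i.e. $B_++B_-=0$. This completes \eqref{ctffe3}.
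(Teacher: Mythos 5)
Your proposal is correct, and on the two singular identities it genuinely departs from the paper's route. For \eqref{ctffe1} and the first equality of \eqref{ctffe2} you argue exactly as the paper does (direct specialization of \eqref{tffe1}, resp.\ the functional equation for odd primitive $\chi$). For the second equality of \eqref{ctffe2}, the paper approaches $(0,0)$ along $s=-z$, so the Estermann values are $D(0,-2z-1;\pm\tfrac aq)$ --- precisely the path regularized in \eqref{ffsd} --- and the subtracted $\tfrac12\zeta(1+2z)$ is damped by $\cos(\tfrac\pi2(1+2z))\sim -\pi z$ to produce the $\tfrac14$. Your path $z=0$, $s\to0$ moves both arguments, $(u,\alpha)=(s,2s-1)$, so your ``finite part'' $D(s,2s-1;\pm\tfrac aq)+\tfrac1s q^{-2s}\zeta(2s)$ converges not to $\pm\pi i\,\sss(\tfrac aq)$ itself but to $\pm\pi i\,\sss(\tfrac aq)+C$ with a sign-independent constant $C$ (of the shape $\zeta'(0)+\tfrac12\log q-\tfrac\gamma2$, coming from comparing your polar subtraction with the subtraction $\tfrac12\zeta(-\alpha)$ in \eqref{ffsd}); your asserted total $2\pi\sss(\tfrac aq)$ is nonetheless right, because the weights tend to $\mp i$ and their sum annihilates $C$ --- this cancellation deserves an explicit sentence, but it is automatic, not a gap. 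For \eqref{ctffe3} the difference is genuine: the paper uses the inverted identity \eqref{tffe2} at $(s,z)=(\tfrac12,-\tfrac12)$, where the finite value $D(0,0;\pm\tfrac aq)$ of \eqref{ffcd} stands alone on the left, the $a$-independent correction term carrying $\zeta(s-z)$ enters with total weight $2\cos(\tfrac\pi2(s-z))\to0$ and leaves exactly the real part $\tfrac14$, and no Taylor coefficients of $D$ ever arise; \eqref{ctffe3} then falls out of the imaginary parts. You keep the direction \eqref{tffe1}, which does force the first-order terms $B_\pm=(\partial_u+2\partial_\beta)D(0,0;\pm\tfrac aq)$, and your device for killing them is valid and pleasant: $\sigma_\alpha(n)=n^\alpha\sigma_{-\alpha}(n)$ gives $D(u,\beta;x)=D(u-\beta,-\beta;x)$ as meromorphic functions, and since the polar divisors $u=1$ and $u=1+\beta$ avoid $(0,0)$, the map $f(\delta):=D(\delta,2\delta;x)$ is analytic there and satisfies $f(\delta)=f(-\delta)$, so in fact $B_+=B_-=0$ individually --- you do not even need to pass to the even combination $E(u,\beta)$. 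Your pole and constant matching (the $-\tfrac1{2\delta}$ and the $-\tfrac{\gamma+\log 2\pi}2$ on the two sides, using $\tfrac{q-1}{\varphi(q)}=1$ for $q$ prime) checks out. What each approach buys: the paper's inversion trick avoids all expansion bookkeeping, while your symmetry argument makes the proof independent of \eqref{tffe2} and isolates a reusable fact, namely that $D$ is even along the diagonal $(\delta,2\delta)$ through $(0,0)$.
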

It is easy to generalize Theorem~\ref{tff} and its corollary to non-prime moduli. However, for simplicity we choose to deal with the prime case only, since Theorems~\ref{et},~\ref{mt} and~\ref{mtc} hold with such a neat formula only when $a$ and $q$ are prime.

We remark that an approximate version of~\eqref{ctffe1} in the special case $a=1$ appeared in~\cite{CG} and that~\eqref{ctffe3} has been recently proved by Louboutin~\cite{Lou} and Djankovic~\cite{Dja}, with a different method. In recent years, there has been quite a lot of interest on explicit formulae for second moments of Dirichlet $L$-functions (see, for example,~\cite{LZ,BR}) and Louboutin wonders weather one can obtain formulae similar to~\eqref{ctffe2} for the mean value of $L\pr{m,\overline\chi}L\pr{n,\chi}\chi(a)$, with $m,n\in\Z$ (the case when $m,n\geq1$ being studied in~\cite{Dja,BR}). Theorem~\ref{tff} can be used to obtain formulae of this kind, since $D(u,v,\frac aq)$ can be decomposed as a double sum of special values of the Hurwitz zeta-function and, if one among $u$ and $u-v$ is a non-positive integer, then one can execute one of the two sums (see for example the proof of Lemma~2 in~\cite{BC13b}).

Finally, we use a simple continued fraction argument, similar to that of Hickerson~\cite{Hic}, to prove a density result for
\est{
\eta\pr{\tfrac aq}:=\frac12(1-i)D\pr{\tfrac12,0;\tfrac aq}+\tfrac12(1+i)D\pr{\tfrac12,0;-\tfrac aq}.
}
\begin{theo}\label{cffd}
The set  $\{(\frac aq,\eta(\frac aq))\mid (a,q)\in\Z_{>0} \}$ is dense in $\R^2$. 
\end{theo}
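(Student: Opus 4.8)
The plan is to prove density of $\{(\frac aq,\eta(\frac aq))\}$ in $\R^2$ by exploiting the reciprocity structure of $\eta$ together with the flexibility afforded by continued fractions, in the spirit of Hickerson's argument for Dedekind sums. By Corollary~\ref{ctff}, $\eta(\frac aq)=M_0^*(a,q)$, and by Theorem~\ref{Ypo} (or rather the underlying cocycle-type relation), $\eta$ transforms predictably as one appends partial quotients to the continued fraction expansion of $\frac aq$. First I would establish a clean recursion: if $\frac aq=[0;b_1,\dots,b_\kappa]$, then appending a new large partial quotient $b_{\kappa+1}$ changes the value of $\eta$ by an explicitly controllable amount. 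Concretely, the key mechanism is that the $j$-th term in the sum of Theorem~\ref{Ypo} contributes, depending on the parity of $j$, either $\pm(v_j/v_{j-1})^{1/2}(\log(v_j/v_{j-1})+\gamma-\log 8\pi)$ or $\mp\frac{\pi}{2}(v_j/v_{j-1})^{1/2}$, and since $v_j/v_{j-1}\approx b_j$ for large $b_j$, a single large partial quotient dominates and can be chosen to push $\eta$ to (nearly) any prescribed large real value, with a sign governed by the parity of its position.

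The strategy then proceeds in two stages. First, I would fix the \emph{second} coordinate: given a target value $\beta\in\R$, I want to choose a continued fraction $[0;b_1,\dots,b_\kappa]$ so that $\eta(\frac aq)$ lands within $\eps$ of $\beta$. The idea is that by selecting partial quotients at an even position and at an odd position, one gets contributions with opposite-signed logarithmic growth (one carrying $+(\log b+\dots)$, the other $-\frac\pi2 b^{1/2}$), so by adjusting their relative sizes one can tune the net contribution continuously through a range that, as the $b_j$ grow, covers all of $\R$. Since $\eta$ takes values governed by a sum of such terms plus a bounded error $\sum_j\psi_0(\cdot)\ll\kappa$, and since I can always absorb the bounded pieces, the reachable set of second coordinates is dense (indeed unbounded in both directions). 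Second, and this is where the continued fraction flexibility is essential, I must simultaneously control the \emph{first} coordinate $\frac aq$: after fixing a prefix $[0;b_1,\dots,b_m]$ that already forces $\frac aq$ to lie in a prescribed small interval around a target $x_0\in[0,1]$, I append further partial quotients to adjust $\eta$ without moving $\frac aq$ appreciably. The point is that once the initial partial quotients pin down $\frac aq$ to within $\eps$, the tail $[0;\dots,b_m,b_{m+1},\dots]$ perturbs $\frac aq$ by $O(1/v_m^2)$, which is negligible, while still permitting arbitrary large partial quotients downstream to set the value of $\eta$.

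Carrying this out rigorously requires the recursion for $\eta$ under extension of the continued fraction, which I would derive from Theorem~\ref{tff} / Corollary~\ref{ctff} combined with the reciprocity in Theorem~\ref{mt}, exactly as Theorem~\ref{Ypo} is assembled; the $\psi_0$ error terms are uniformly $O(1)$ per step by Theorem~\ref{mtc}, so the accumulated error is $O(\kappa)$ and is dominated by the $\gg b_j^{1/2}$ main terms. The main obstacle will be the \emph{joint} control: ensuring that the partial quotients chosen to realize a given second coordinate do not disturb the already-fixed first coordinate, and conversely that pinning the first coordinate leaves enough freedom in the tail to hit the target value of $\eta$. This is a two-parameter approximation that must be decoupled. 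I expect to handle it by a nested construction — first commit the leading block of partial quotients to place $\frac aq$ near $x_0$ with a very sharp tolerance (using that $|\frac aq - [0;b_1,\dots,b_m]|$ is controlled by the size of $v_m$), then treat the high-index partial quotients, which influence $\frac aq$ only through exponentially small corrections, as free parameters for tuning $\eta$. A secondary technical point is verifying that arbitrarily large real values of both signs are attainable for $\eta$: this follows because even-indexed and odd-indexed large partial quotients contribute with the genuinely opposite signs dictated by the $(\pm1)^j$ and $\mp\frac\pi2$ structure in Theorem~\ref{Ypo}, so neither $+\infty$ nor $-\infty$ is excluded, and by continuity of each block's contribution in its (real-relaxed) partial quotients the intermediate values are filled in.
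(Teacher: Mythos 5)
Your construction is, in skeleton, exactly the paper's (a Hickerson-style argument): fix a prefix $[0;b_1,\dots,b_r]$ to pin the abscissa within $\eps$ of the target, then append large partial quotients whose positional parity produces the two opposite-signed mechanisms, $+b^{1/2}(\log b+\gamma-\log 8\pi)$ at odd positions and $-\tfrac\pi2 b^{1/2}$ at even ones, with the tail moving $\frac aq$ only by $O(1/v_r^2)$. However, there is a genuine gap in your stated route: you begin from $\eta(\frac aq)=M_0^*(a,q)$ (Corollary~\ref{ctff}) and the expansion of Theorem~\ref{Ypo}, and both of these are proved only for $q$ \emph{prime} (they pass through Lemma~\ref{ffs}, which uses orthogonality of characters modulo a prime). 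In the density construction the denominator $q$ is the continuant determined by your chosen partial quotients, and you have no way to force it to be prime --- the paper explicitly remarks after Theorem~\ref{cffd} that restricting to prime $q$ leads to an apparently intractable problem about primes in short intervals and arithmetic progressions. The repair, and what the paper actually does, is to work at the level of the Estermann function itself: Lemma~\ref{lmfil} and Corollary~\ref{lmfic} give the continued-fraction expansion of $D_0(\tfrac12,\pm\tfrac aq)$, hence of $\eta$, for \emph{all} coprime $a,q$, because they rest on Proposition~\ref{cfd} (valid for composite moduli) rather than on the moment identities. Your parenthetical ``or rather the underlying cocycle-type relation'' points at this, but the proof must commit to it; as written, the chain of citations fails for composite $q$.

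Two further steps need tightening. First, your claim that the reachable ordinates are dense ``since I can always absorb the bounded pieces,'' with the error only controlled as $\sum_j\psi_0(\cdot)\ll\kappa$, does not suffice: an unknown $O(1)$ (let alone $O(\kappa)$) defect in the final ordinate gives unboundedness, not density. What makes the argument work is that for a \emph{fixed} prefix the error terms are exact constants absorbed into a constant $c_{x,w}$, while the errors attached to the two appended quotients vanish, since $\psi_0(x)\ll x$ (equivalently $\mathcal E(x)\ll x$, from Theorem~\ref{mtc} with $N=0$) and the relevant arguments are $v_r/v_{r+1}$ and $v_{r+1}/v_{r+2}\to0$. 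Second, your appeal to ``continuity of each block's contribution in its (real-relaxed) partial quotients'' is not available, as the $b_j$ are integers; the correct substitute is that the increments of $b\mapsto\tfrac\pi2 b^{1/2}$ are $O(b^{-1/2})=o(1)$, so an explicit integer choice such as $b_{r+1}=\bigl[\tfrac{4}{\pi^2}\bigl(c_{x,w}+b_{r+2}^{1/2}(\log b_{r+2}+\gamma-\log8\pi)-y\bigr)^2\bigr]$ (with $r$ odd and $b_{r+2}\to\infty$) lands within $o(1)$ of the target $y$ --- this is precisely the paper's closing step. With the reroute through Corollary~\ref{lmfic} and these two quantitative corrections, your argument coincides with the paper's proof.
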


It would be interesting to extend this result by adding the restriction that $q$ is prime, since in this case $\eta(\frac aq)$ coincides with $M_0(a,q)$. However, this restriction leads to a problem concerning the existence of primes in short intervals and arithmetic progressions, which doesn't seem to be easily tractable.

\section{Acknoledgments}
The author would like to thank Sary Drappeau, Adam Harper, James Maynard and Dimitris Koukoulopoulos for helpful discussions and Matthew Young and Andrew Granville for useful comments. 

\section{Proof of Theorem~\ref{tff}}
We start by proving Theorem~\ref{tff}. For $\Re(s)>1+|\Re(z)|$ and $(a,q)=1$, we define
\est{
A\pr{s,z;a,q}&:=\sum_{m \equiv -na\mod q}\frac{1}{(nm)^s}\pr{\frac{m}{n}}^{z}.\\
}
By dividing the sums into classes modulo $q$, one obtain that, for every $a,q$, $A\pr{s,z;a,q}$ extends to a meromorphic function on $\C^2$. In fact, one has
\es{\label{dfatvN}
A\pr{s,z;\frac aq}&=\frac1q\sum_{\ell=1}^qF\pr{s-z,\frac{\ell}q}F\pr{s+z,\frac{a\ell}q},\\
}
where $F(s,x)$ is the periodic zeta-function, defined as 
\est{
F(s,x):=\sum_{n=1}^\infty\frac{\e{nx}}{n^s}
}
for $\Re(s)>1$, $x\in\R$, and extendable to an analytic function of $s$ on $\C\setminus\{1\}$. (For this and for other properties of the periodic and the Hurwitz zeta-functions used below, see~\cite{Apo}, Chapter~11).

The following lemma relates $M^*\pr{s,z;a,q}$ to $A\pr{s,z;a,q}$. It is essentially equivalent to Lemma~2.4 of~\cite{You11a}, with the error term removed (in fact one can see that, by the functional equation of the Riemann zeta-function, the error terms discarded in the proof of Young's lemma combine and cancel exactly).

\begin{lemma}\label{ffs}
Let $q$ be prime or $q=1$ and let $(a,q)=1$. Let $s,z\in\C$. Then
\est{
M^*\pr{s,z;a,q}&=q^{s-z}A\pr{s,z;- a,q}.
}
\end{lemma}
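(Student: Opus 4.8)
The plan is to prove Lemma~\ref{ffs} by relating both sides to sums over Dirichlet characters and exploiting orthogonality. The key observation is that $M^*(s,z;a,q)$ is built from a character sum over \emph{all} characters mod $q$ (the second line of its defining display writes the primitive-character contribution plus the explicit $\zeta$-correction precisely as a sum over all $\chi\mod q$), whereas $A(s,z;a,q)$ is expressed via the periodic zeta-function $F$ in~\eqref{dfatvN}. So the natural strategy is to open both objects as double Dirichlet series and match them term by term after inserting the orthogonality relation for characters modulo $q$.

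First I would work in the region of absolute convergence $\Re(s)>1+|\Re(z)|$, so that all the series involved converge and the meromorphic-continuation claim follows at the end by uniqueness of analytic continuation (as flagged by the Remark after Theorem~\ref{tff}). Starting from the definition, I would write
\est{
\frac{q^{s-z}}{\varphi(q)}\sum_{\chi\mod q}L(s-z,\overline\chi)L(s+z,\chi)\chi(a)
=\frac{q^{s-z}}{\varphi(q)}\sum_{m,n\geq1}\frac{1}{m^{s-z}n^{s+z}}\chi(a)\sum_{\chi\mod q}\overline\chi(m)\chi(n),
}
and then apply orthogonality: $\sum_{\chi\mod q}\overline\chi(m)\chi(na)=\varphi(q)$ when $na\equiv m\pmod q$ with $(mn,q)=1$, and $0$ otherwise. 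This collapses the double sum to the diagonal condition $m\equiv na\pmod q$, which is exactly the congruence defining $A(s,z;a,q)$ after the sign flip $a\mapsto -a$ (so that $m\equiv -n(-a)$); the weight $(m/n)^{z}$ against $m^{-(s-z)}n^{-(s+z)}=(nm)^{-s}(m/n)^{z}$ matches precisely, giving $q^{s-z}A(s,z;-a,q)$ up to the terms where $q\mid m$ or $q\mid n$, which orthogonality over all characters (including the principal one) forces us to track separately.

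The main obstacle, and the whole content of the lemma, will be bookkeeping the boundary terms coming from $q\mid mn$, i.e.\ reconciling the sum over \emph{all} characters with the sum over \emph{primitive} characters. Since $q$ is prime, a non-principal character mod $q$ is automatically primitive, so the only discrepancy between $\sum^*$ and $\sum$ is the principal character $\chi_0$, whose $L$-function is $L(s,\chi_0)=(1-q^{-s})\zeta(s)$. The explicit correction term in the definition of $M^*$ — namely $\frac{q^{-z}}{\varphi(q)}(q^{1-s}+q^{s}-q^{z}-q^{-z})\zeta(s+z)\zeta(s-z)$ — is tailored exactly to account for this. I would therefore compute the $\chi_0$ contribution $\frac{q^{s-z}}{\varphi(q)}(1-q^{-(s-z)})(1-q^{-(s+z)})\zeta(s-z)\zeta(s+z)$, add the stated correction, and check that the total reproduces precisely the $q\mid mn$ terms that orthogonality over \emph{all} characters leaves in $q^{s-z}A(s,z;-a,q)$ (via~\eqref{dfatvN}, the $\ell=q$ term of which involves $F(\cdot,1)=\zeta(\cdot)$). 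This algebraic identity is the crux; once the elementary factors $(1-q^{-s\pm z})$ and the $\ell=q$ diagonal are expanded, the comparison with Lemma~2.4 of~\cite{You11a} indicated in the text confirms that the previously-discarded error terms cancel exactly, and uniqueness of meromorphic continuation then extends the identity from $\Re(s)>1+|\Re(z)|$ to all $s,z\in\C$.
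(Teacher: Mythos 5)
Your proposal is correct and follows essentially the same route as the paper's proof: work in the region $\Re(s)>1+|\Re(z)|$, expand into Dirichlet series, apply orthogonality with the primitive/all-character discrepancy reduced to the principal character (since $q$ is prime), note that $m\equiv an \ (\mathrm{mod}\ q)$ together with $q\mid mn$ and $(a,q)=1$ forces $q\mid m$ and $q\mid n$ so the non-coprime diagonal of $A$ contributes exactly $q^{-2s}\zeta(s+z)\zeta(s-z)$, verify that the explicit correction term cancels everything, and conclude by analytic continuation. One small correction to an aside: those $q\mid mn$ terms are evaluated directly by writing $m=qm'$, $n=qn'$, not via the $\ell=q$ term of~\eqref{dfatvN} (which equals $\tfrac1q\zeta(s-z)\zeta(s+z)$, a different quantity), but this does not affect your argument, whose crux algebra indeed closes as you predict.
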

\begin{proof}
We can assume $\Re(s)>1+|\Re(z)|$, since the lemma then follows without this restriction by analytic continuation. 

Expanding $L\pr{s-j,\overline\chi}$ and $L\pr{s+j,\chi}$ into their Dirichlet series, and applying the orthogonality relation for Dirichlet characters, we have that
\est{
\sumstar_{\chi\mod q} L\pr{s-z,\overline\chi}L\pr{s+z,\chi}\chi(a)&=\sumstar_{\chi\mod q} \sum_{m,n\geq1}\pr{\frac mn}^{z}\frac{\overline\chi(m)\chi(na)}{(mn)^s}\\
&=\varphi(q) \sum_{\substack{(nm,q)=1,\\m\equiv an\mod q}} \pr{\frac mn}^{z}\frac{1}{(mn)^s}-\sum_{\substack{(nm,q)=1}} \pr{\frac mn}^{z}\frac{1}{(mn)^s}.
}
Now, for $(a,q)=1$ we have that $m\equiv an\mod q$ and $q|mn$ imply $q|m,q|n$. It follows that
 \est{
\sum_{\substack{(q,nm)=1,\\m\equiv an\mod q}} \pr{\frac mn}^{z}\frac{1}{(mn)^s}&=A\pr{s,z;-\tfrac aq}-q^{-2s}\zeta(s+z)\zeta(s-z).
}
Moreover, we have
 \est{
\sum_{\substack{(q,nm)=1}} \pr{\frac mn}^{z}\frac{1}{(mn)^s}&=\pr{1-q^{z-s}-q^{-z-s}+q^{-2s}}\zeta(s+z)\zeta(s-z)
}
and the lemma follows.
\end{proof}

The following lemma (valid also for $q$ composite) expresses $A\pr{s,z;a,q}$ in terms of the Estermann function.
\begin{lemma}\label{ftrr}
Let $(a,q)=1$, $q>0$. Then
\es{\label{ftrq}
A\pr{s,z;a,q}
&=q^{-s+z}\Gamma(1-s+z)(2\pi)^{s-1-z}\times\\
&\quad\times\bigg(e^{\pi i\frac{1-s+z}2}D\pr{s+z,2s-1;\tfrac aq}+e^{-\pi i\frac{1-s+z}2}D\pr{s+z,2s-1;-\tfrac aq}\bigg).\\
%&=(-1)^jq^{-s+j}\Gamma(1-s+j)(2\pi)^{s-1-j}\bigg((-i)^je^{\pi i\frac{1-s}2}D\pr{s+j,2s-1,\frac aq}+(i)^je^{-\pi i\frac{1-s}2}D\pr{s+j,2s-1,-\frac aq}\bigg).\\
}
\end{lemma}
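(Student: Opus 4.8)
The plan is to realize both sides of~\eqref{ftrq} as one and the same finite combination of products of Hurwitz zeta functions $\zeta(w,x)=\sum_{n\geq0}(n+x)^{-w}$, using Hurwitz's formula (cf.~\cite{Apo}) to pass between the periodic zeta function $F$ and $\zeta(w,x)$. Throughout, I would read every identity below as an identity of meromorphic functions on $\C^2$, each first established in a suitable region of absolute convergence and then continued. This is essential, since the functional equation relates a convergent half-plane to a divergent one and there is no common domain in which all the series involved converge simultaneously; the three identities I chain together hold in mutually disjoint regions.

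First I would sort the variables of $A$ into residue classes modulo $q$. For $\Re(s)>1+|\Re(z)|$, writing $m\equiv\mu$, $n\equiv\nu\mod q$ with $1\le\mu,\nu\le q$ and $\mu\equiv-\nu a\mod q$, the inner sum over each class is a Hurwitz zeta function, so that
\est{
A\pr{s,z;\tfrac aq}=q^{-2s}\sum_{\substack{\mu,\nu=1,\\ \mu\equiv-\nu a\mod q}}^q \zeta\pr{s-z,\tfrac\mu q}\zeta\pr{s+z,\tfrac\nu q}.
}
Since $(a,q)=1$, for each $\nu$ there is exactly one admissible $\mu$, so this is really a sum over $\nu$; both sides are meromorphic, so the identity persists for all $s,z$.

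Next I would record a mixed representation of the Estermann function. Writing $\sigma_{2s-1}(n)=\sum_{de=n}d^{2s-1}$ gives $D\pr{s+z,2s-1;\tfrac hq}=\sum_{d,e\ge1}\e{deh/q}d^{-(1-s+z)}e^{-(s+z)}$; sorting the variable $e$ into residue classes modulo $q$ and collapsing the $d$-sum into a periodic zeta function yields
\est{
D\pr{s+z,2s-1;\tfrac hq}=q^{-(s+z)}\sum_{\sigma=1}^q F\pr{1-s+z,\tfrac{\sigma h}q}\zeta\pr{s+z,\tfrac\sigma q},
}
valid, say, for $\Re(s+z)>1$ and $\Re(s-z)<0$, and then everywhere by continuation. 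Substituting this, with $h=\pm a$, into the right-hand side of~\eqref{ftrq}, the two exponential factors $e^{\pm\pi i(1-s+z)/2}$ pair with $F\pr{1-s+z,\tfrac{\sigma a}q}$ and $F\pr{1-s+z,-\tfrac{\sigma a}q}$ into exactly the shape of Hurwitz's formula. Applying the latter with $w=1-s+z$ and argument $-\sigma a\mod q$, and using $F(w,1-x)=F(w,-x)$, gives
\est{
e^{\pi i\frac{1-s+z}2}F\pr{1-s+z,\tfrac{\sigma a}q}+e^{-\pi i\frac{1-s+z}2}F\pr{1-s+z,-\tfrac{\sigma a}q}=\frac{(2\pi)^{1-s+z}}{\Gamma(1-s+z)}\zeta\pr{s-z,\tfrac{\mu}q},
}
where $1\le\mu\le q$ is the representative of $-\sigma a\mod q$. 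The prefactor $q^{-s+z}\Gamma(1-s+z)(2\pi)^{s-1-z}$ times the $q^{-(s+z)}$ from the representation of $D$ then collapses: the powers of $q$ combine to $q^{-2s}$, while $\Gamma(1-s+z)$ and $(2\pi)^{s-1-z}$ cancel against the factors produced by Hurwitz's formula. With $\sigma=\nu$ (so $\mu\equiv-\nu a$) this leaves exactly $q^{-2s}\sum_{\nu}\zeta\pr{s-z,\tfrac\mu q}\zeta\pr{s+z,\tfrac\nu q}$, which is $A$ by the first step.

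The only genuine subtlety, and the step I would treat most carefully, is the phase bookkeeping in Hurwitz's formula: matching each $e^{\pm\pi i(1-s+z)/2}$ with the correct sign in the argument of $F$, correctly tracking the reduction $-\sigma a\mod q$, and invoking $F(w,1-x)=F(w,-x)$. Coupled with this is the repeated appeal to meromorphic continuation, which alone legitimizes chaining the three displayed identities even though each holds only in its own region of absolute convergence.
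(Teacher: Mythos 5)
Your proof is correct and is essentially the paper's argument run in reverse: the paper expands $A$ through the periodic zeta function via~\eqref{dfatvN} and~\eqref{ffadsa}, applies the same functional equation (Hurwitz's formula) to $F(s-z,\cdot)$, and recombines the resulting double sum into the Estermann function in the strip $1-\Re(z)<\Re(s)<\Re(z)$ before continuing meromorphically, whereas you expand both sides into bilinear combinations of Hurwitz zeta functions and match them term by term. The substance --- residue-class decomposition modulo $q$, Hurwitz's formula with the phase bookkeeping at $\pm\sigma a \bmod q$, and the final appeal to meromorphic continuation --- is identical in the two treatments.
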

\begin{proof}
We start by the decomposition~\eqref{dfatvN} of $A\pr{s,z;a,q}$ in terms of the periodic zeta-function and we decompose further one of the two periodic zeta-function using the identity
\es{\label{ffadsa}
F\pr{s,\tfrac hk}=k^{-s}\sum_{r=1}^k\e{\tfrac{rh}k}\zeta\pr{s,\tfrac rk}.
}
As usual, $\zeta\pr{s,x}$ is the the Hurwitz zeta-function, defined as
\est{
\zeta\pr{s,x}:=\sum_{n\geq\pg{1-x}}\frac{1}{(n+{x})^s}
}
for $\Re(s)>1$, and extendable to a meromorphic function of $\C$. By~\eqref{dfatvN} and~\eqref{ffadsa} we get
\est{
A\pr{s,z;a,q}&=q^{-1-s-z}\sum_{r,\ell=1}^q\e{r\ell\tfrac{ a}{q}}F(s-z,\tfrac{\ell}q)\zeta(s+z,\tfrac{r}q).\\
}
Next, we express also the other periodic zeta-function in terms of the Hurwitz zeta-function, but this time we use the functional equation,
\est{
F(s-z,x)=\Gamma(1-s+z)(2\pi)^{s-1-z}\pr{e^{\pi i\frac{1-s+z}2}\zeta(1-s+z,x)+e^{-\pi i\frac{1-s+z}2}\zeta(1-s+z,-x)}.
}
We get
\es{\label{ll1}
A\pr{s,z;a,q}&=q^{-1-s-z}\Gamma(1-s+z)(2\pi)^{s-1-z}\sum_{r,\ell=1}^q\e{r\ell\tfrac aq}\times\\
&\quad\times\Big(e^{\pi i\frac{1-s+z}2}\zeta(1-s+z,\tfrac\ell q)\zeta(s+z,\tfrac {r}q )+e^{-\pi i\frac{1-s+z}2}\zeta(1-s+z,-\tfrac\ell q)\zeta(s+z,\tfrac {r}q )\Big).\\
}
Now, for $1-\Re(z)<\Re(s)<\Re(z)$ we have
\es{\label{ll2}
 \sum_{r,\ell=1}^q\e{r\ell\tfrac aq}\zeta(1-s+z,\pm\tfrac\ell q)\zeta(s+z,\tfrac {r}q)&= \sum_{n\geq\pg{1\mp\frac\ell q}}\sum_{m\geq\pg{1-\frac rq}}\sum_{r,\ell=1}^q\frac{\e{r\ell\frac aq}}{\pr{n\pm\frac\ell q}^{1-s+z}\pr{n+\frac r q}^{s+z}}\\
 &=q^{1+2z} \sum_{n,m\geq1}\frac{\e{\pm nm\frac aq}}{n^{1-s+z}m^{s+z}}\\
 &=q^{1+2z} D(s+z,2s-1,\pm \tfrac aq).
}
Thus, combining~\eqref{ll1} and~\eqref{ll2}, we obtain~\eqref{ftrq} for $1-\Re(z)<\Re(s)<\Re(z)$ and the Lemma then follows by analytic continuation.
\end{proof}

\begin{proof}[Proof of Theorem~\ref{tff}]
Equation~\eqref{tffe1} follows immediately by Lemma~\ref{ffs} and Lemma~\ref{ftrr}. Equation~\eqref{tffe2} follows from~\eqref{tffe1} by using the reflection formula for the Gamma function,
\es{\label{rfgf}
\Gamma(s)\Gamma(1-s)=\frac{\pi}{\sin (\pi s)}.
}
\end{proof}

\begin{proof}[Proof of Corollary~\ref{ctff}]
Equation~\eqref{ctffe2} is simply~\eqref{tffe1} with $s=\frac12$, $w=0$. We now prove~\eqref{ctffe2}. By~\eqref{tffe1}, with $s=-z$, and~\eqref{ffsd}, we have 
\est{
&M^*\pr{-z,z;a,q}
=\frac{\Gamma(1+2z)}{(2\pi)^{1+2z}}\bigg(e^{-\pi i\frac{1+2z}2}D\pr{0,-2z-1;\tfrac aq}+e^{\pi i\frac{1+2z}2}D\pr{0,-2z-1;-\tfrac aq}\bigg)\\
&\quad=\frac{\Gamma(1+2z)}{(2\pi)^{1+2z}}\bigg(e^{-\pi i\frac{1+2z}2}(\pi i \sss\pr{\tfrac aq}-\tfrac12\zeta(1+2z))+e^{\pi i\frac{1+2z}2}(-\pi i \sss\pr{\tfrac aq}-\tfrac12\zeta(1+2z))\bigg)+o(1)\\
%&=\frac{\Gamma(1+2z)}{(2\pi)^{1+2z}}\bigg(-\pi i\sss\pr{\tfrac aq} 2i\sin\pr{ \frac{\pi}2(1+2z)}-\frac12\zeta(1+2z)2\cos\pr{ \frac{\pi}2(1+2z)}\bigg)+o(1)\\
&\quad=\frac{\Gamma(1+2z)}{(2\pi)^{1+2z}}\Big(2\pi \sss\pr{\tfrac aq} \sin\pr{ \tfrac{\pi}2(1+2z)}-\zeta(1+2z)\cos\pr{ \tfrac{\pi}2(1+2z)}\Big)+o(1)\\
}
as $z\rightarrow 0$, since, $\sss(-\frac aq)=-\sss(\frac aq)$. Thus, taking the limit for $z\rightarrow0$, we get
 \es{\label{asd11}
M^*\pr{0,0;a,q}
%&=\frac{1}{2\pi}\bigg(2\pi s\pr{\tfrac aq} +\frac \pi 2\bigg)\\
&=s\pr{\tfrac aq} +\frac 14.\\
}
Finally, by definition,
\es{\label{asd21}
M^*\pr{0,0;a,q}%&=\frac{1}{\varphi(q)}\sumstar_{\chi\mod q} \pmd{L\pr{0,\chi}}^2\chi(a)+\mbox{}\\
%&\quad+\frac{1}{\varphi(q)}\pr{q-1}\zeta(0)^2\\
%&=\frac{1}{\varphi(q)}\sumstar_{\chi\mod q} \pmd{L\pr{0,\chi}}^2\chi(a)+\mbox{}\\
%&\quad+\zeta(0)^2\\
%&=\frac{1}{\varphi(q)}\sumstar_{\chi\mod q} \pmd{L\pr{0,\chi}}^2\chi(a)+\zeta(0)^2\\
&=\frac{1}{\varphi(q)}\sumstar_{\chi\mod q} \pmd{L\pr{0,\chi}}^2\chi(a)+\frac14\\
&=\frac1{\pi^2}\frac{q}{\varphi(q)}\sumstar_{\substack{\chi\mod q,\\\chi(-1)=-1}} \pmd{L\pr{1,\chi}}^2\chi(a)+\frac14,\\
}
%Thus
%\est{
%\frac{1}{\varphi(q)}\sumstar_{\chi\mod q} \pmd{L\pr{0,\chi}}^2\chi(a)=s\pr{\tfrac aq}+\frac12
%}
since $\zeta(0)=-\frac12$ and $|L(0,\chi)|=\frac{q^\frac12}{\pi}|L(1,\chi)|$ by the functional equation. Equation~\eqref{ctffe2} then follows by~\eqref{asd11} and~\eqref{asd21}.

Equation~\eqref{ctffe3} follows in a similar way, using~\eqref{tffe2} and~\eqref{ffcd}.
\end{proof}

\section{Proofs of Theorems~\ref{et},~\ref{mt} and~\ref{mtc}}
In this section we prove the reciprocity formula for the Estermann function (valid also for composite $a,q$). Theorem~\ref{et} and Theorem~\ref{mtc} (as well as its weaker version, Theorem~\ref{mt}) then follow immediately by combining it with Theorem~\ref{tff}. The strategy of the proof is similar to the one used in~\cite{BC13a} for the exact formula and Lemma A of~\cite{KP02} for the approximated version (however, in our case the comparison with Gauss' hypergeometric formula simplify considerably the argument).

For convenience of notation, we write
\est{
D_j\pr{s; \frac aq}:=D\pr{s+j,2s-1; \frac aq}.
}
We also define a twisted Eisenstein series
\est{
\s_j\pr{s,z}:=\frac1{(2\pi i)^j}\sum_{n\geq1}\e{nz}\frac{\sigma_{2s}(n)}{n^{s+\frac12+j}},
}
defined for $\Im(z)\geq0$ if $|\Re(s)|<\frac12+j-1$ and $\Im(z)>0$ otherwise. Notice that the $\s_j\pr{s,z}$ can be interpreted as Eichler integrals of $\s_0\pr{s,z}.$

\begin{lemma}\label{rtfs}
Let $|\Re(s)|<\frac12$, $s\neq0$. Then for $0<\arg z<\pi$ we have 
\es{\label{mmmf}
&\s_0\pr{s,z}=\\
&=\sum_{j=0}^\infty \frac{(-1)^jQ_{2j}(s)}{j!}\bigg(z^j\s_j\pr{s,-1/z}-\sum_{\ell=0}^j\frac{(-1)^{j-\ell}}{(j-\ell)!}\zeta\pr{\tfrac12+\ell+s}\zeta\pr{\tfrac12+\ell-s}\pr{\frac z{2\pi i}}^{\ell}+\\
&\hspace{1cm}-\Gamma\pr{\tfrac12+s-j}\zeta\pr{1+2s}(2\pi i/ z)^{-\frac12-s}-\Gamma\pr{\tfrac12-s-j}\zeta\pr{1-2s}(2\pi i/ z)^{-\frac12+s}\bigg)+Z(s,z),
}
where
\est{
%r(s,z)&:=\Gamma\pr{\tfrac12+s}\zeta\pr{1+2s}(2\pi i/z)^{-\frac12-s}+\Gamma\pr{\tfrac12-s}\zeta\pr{1-2s}(2\pi i/z)^{-\frac12+s}+\zeta\pr{\tfrac12+s}\zeta\pr{\tfrac12-s},\\
Z(s,z)&:=\frac1{2\pi i}\int_{\pr{\frac12+\delta}}\Gamma(w) \frac{\cos\pi s}{\sin\pi w}\zeta\pr{\tfrac12+w-s}\zeta\pr{\tfrac12+w+s}(-2\pi iz)^{-w}dw,\\
Q_{2j}(s)&:=\prod_{\ell=0}^{j-1}\pr{(\tfrac12+\ell)^2-s^2}
}
for any $|\Re(s)|<\delta<\frac12$. Moreover, for any $N\geq1$ we have
\es{\label{mmmfao}
&\s_0\pr{s,z}=\\
&=\sum_{j=0}^{2N} \frac{(-1)^jQ_{2j}(s)}{j!}\bigg(z^j\s_j\pr{s,-1/z}+\frac{\zeta\pr{\tfrac12+j+s}\zeta\pr{\tfrac12+j-s}(z/2\pi i)^{j}}{j!}\bigg)+{}\\
&\quad+\Gamma\pr{\tfrac12+s}\zeta\pr{1+2s}(-2\pi iz)^{-\frac12-s}+\Gamma\pr{\tfrac12-s}\zeta\pr{1-2s}(-2\pi iz)^{-\frac12+s}+\mathcal{E}_N(s,z),
}
where $\mathcal{E}_N(s,z)$ is analytic as a function of $z$ in $\Im(z)>0$ and $\mathcal C^N$ in $\Im(z)\geq0$. Also, $\mathcal{E}_N(s,z)$ satisfies 
\begin{align}
\mathcal{E}_N(s,z)&\ll_{N,\eps}(1+|s|^{4N+4})\pr{|z|^{N+\frac32-\eps}+|z|^{2N+1}},\label{ffb1}\\
\mathcal{E}_N(s,z)&\ll_{N,\eps}(1+|s|^{8N+4})|z|^{2N+1},\label{ffb2}
\end{align}
where $\eps>0$ and $\Im(z)\geq0$.

\end{lemma}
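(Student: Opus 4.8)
The plan is to start from a Mellin--Barnes representation of $\s_0(s,z)$ and then to move the line of integration, using a functional equation to convert the integral into one attached to the inverted argument $-1/z$. Since $\sum_{n\ge1}\sigma_{2s}(n)n^{-u}=\zeta(u)\zeta(u-2s)$ and $e^{2\pi i nz}=\frac1{2\pi i}\int_{(c)}\Gamma(w)(-2\pi i nz)^{-w}\,dw$ for $\Im(z)>0$ and $c>0$, interchanging summation and integration gives, for $c>\tfrac12+|\Re(s)|$,
\[
\s_0(s,z)=\frac1{2\pi i}\int_{(c)}\Gamma(w)\,\zeta(\tfrac12+w+s)\,\zeta(\tfrac12+w-s)\,(-2\pi i z)^{-w}\,dw,
\]
and, because $\delta>|\Re(s)|$, the contour can be pulled back to $\Re(w)=\tfrac12+\delta$ without meeting the poles of the zeta factors at $w=\tfrac12\pm s$ (the hypothesis $s\neq0$ keeps these two poles simple and distinct, which is what prevents the logarithmic terms from appearing in the main formula). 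This is the representation on which both parts of the lemma will be built.

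For the exact formula \eqref{mmmf} I would apply the functional equation of the periodic zeta function (equivalently, Voronoi summation for the Estermann function, exactly as in the proof of Lemma~\ref{ftrr}) to rewrite the integrand in terms of zeta values at the reflected points $\tfrac12-w\pm s$. Recognising the resulting Dirichlet series in the dual variable turns the modular inversion $z\mapsto-1/z$ into the reciprocal Eisenstein series; the successive powers $n^{-j}$ that distinguish the $\s_j$ appear because each $\s_j(s,\cdot)$ is the $j$-fold Eichler integral of $\s_0(s,\cdot)$, so the inversion kernel expands into the series $\sum_j z^j\s_j(s,-1/z)$ with coefficients $\tfrac{(-1)^jQ_{2j}(s)}{j!}$, where $Q_{2j}(s)=\Gamma(\tfrac12+j+s)\Gamma(\tfrac12+j-s)/\big(\Gamma(\tfrac12+s)\Gamma(\tfrac12-s)\big)$ records the Gamma factors thrown off by the functional equation. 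The simple poles at $w=\tfrac12\pm s$ contribute the terms $\Gamma(\tfrac12\pm s-j)\zeta(1\pm2s)$, while the poles of $\Gamma(w)$ at the non-positive integers, after subtracting the matching partial Taylor sums, give the polynomial corrections $\sum_\ell\tfrac{(-1)^{j-\ell}}{(j-\ell)!}\zeta(\tfrac12+\ell+s)\zeta(\tfrac12+\ell-s)(z/2\pi i)^\ell$. Re-summing the residue contributions produces a ${}_2F_1(\cdot,\cdot;\cdot;1)$ that Gauss' formula \eqref{ghf} evaluates to the clean Gamma ratio; its restriction to $\Re(a+b)<\Re(c)$ is precisely what makes the outer $j$-sum only non-uniformly convergent. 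What remains after these extractions is the absolutely convergent integral $Z(s,z)$.

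For the approximate formula \eqref{mmmfao} I would avoid Gauss' identity entirely. Instead I truncate the reciprocity expansion at $j=2N$, retaining the first $2N+1$ clusters $\tfrac{(-1)^jQ_{2j}(s)}{j!}\big(z^j\s_j(s,-1/z)+\cdots\big)$ together with the two pole terms $\Gamma(\tfrac12\pm s)\zeta(1\pm2s)(-2\pi i z)^{-\frac12\mp s}$, and I collect everything beyond into a remainder integral $\mathcal E_N(s,z)$ on a fixed vertical line. Following the device of Lemma~A of \cite{KP02}, the Gamma ratio supplied by the functional equation is expanded to order $2N$ with an explicit integral remainder, so that no convergence hypothesis is needed. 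The integral defining $\mathcal E_N$ is then bounded by Stirling's formula for $\Gamma$ and the convexity bound for $\zeta$ on vertical lines, tracking the $s$- and $z$-dependence separately; balancing the line of integration against $(-2\pi i z)^{-w}$ in two different ways yields the two estimates \eqref{ffb1} and \eqref{ffb2}, with the $|z|^{2N+1}$ coming from the last residue retained and the $|z|^{N+\frac32-\eps}$ from the shifted integral. Analyticity of $\mathcal E_N$ in $\Im(z)>0$ is immediate, and the $\mathcal C^N$ regularity up to $\Im(z)=0$ follows because the decay $|z|^{N+\frac32-\eps}$ leaves $N$ derivatives in $z$ to spare under the integral sign.

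The step I expect to be the main obstacle is producing $\mathcal E_N$ with fully explicit and uniform control in \emph{both} $s$ and $z$ together with the stated boundary regularity. The polynomial dependence on $s$ (the factors $1+|s|^{4N+4}$ and $1+|s|^{8N+4}$) requires careful bookkeeping of the Gamma factors generated by the repeated use of the functional equation and of the $s$-uniform convexity bounds for the two zeta factors, while the $\mathcal C^N$ smoothness on the closed half-plane $\Im(z)\ge0$ — where the series defining $\s_0(s,z)$ need not converge on the boundary — depends on the reciprocity genuinely cancelling the singular boundary behaviour of the individual series, which is exactly the purpose served by the explicit residue terms.
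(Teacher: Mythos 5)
Your proposal follows essentially the same route as the paper's proof: the Mellin--Barnes representation of $\s_0(s,z)$, reflection via the functional equation of $\zeta$ applied to the two zeta factors in the integrand (this, rather than the periodic-zeta/Voronoi step of Lemma~\ref{ftrr}, is what the paper actually uses, and the leftover $Z(s,z)$ arises precisely from the trigonometric correction separating the product of $\chi$-factors from the pure Gamma ratio), expansion of that Gamma-ratio kernel by Gauss' formula~\eqref{ghf} into $\sum_j(-1)^jQ_{2j}(s)\Gamma(w-j)/j!$, and residue extraction producing the explicit terms. For the approximate formula the paper likewise uses the Kaczorowski--Perelli-style uniform Stirling expansion of the kernel with remainder $O_N\bigl((1+|s|^{2N+2})|\Gamma(w)|/|w|^{N+1}\bigr)$, exactly as you outline, the only minor divergence being that~\eqref{ffb2} is deduced by applying~\eqref{mmmfao} with $2N$ in place of $N$ and bounding the extra terms trivially, rather than by re-balancing the contour.
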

\begin{remark}\label{rkexp}
In the Lemma,  $(i/ z)^{s}$ and $(-iz)^{s}$ have to be interpreted as $e^{s(\frac {\pi i}2-\log z)}$ and $e^{s(\log z-\frac{\pi i} 2)}$ respectively, with $0\leq\arg z\leq\pi$.
\end{remark}
\begin{remark}\label{rk2}
Notice that if we denote by $\tilde {\mathcal{E}}_N$ the sum of $\mathcal{E}_N$ and the terms from $N<j\leq 2N$ of the sum on the right hand side of~\eqref{mmmfao}, then $\tilde{\mathcal{E}}_N(s,z)$ is still bounded by $(1+|s|^{4N+4})|z|^{N+1}$, and is $C^N$ in $\R\setminus\{0\}$ and $C^{[N/2]}$ at $z=0$.

We also remark that we chose to give bounds for $\mathcal{E}_N(s,z)$ which are uniform in $s$, since this is needed if one wants to use Lemma~\ref{rtfs} to give results analogous to the Theorems~\ref{mt} and~\ref{mtc} for the function $S_f$ studied by Young in~\cite{You11a}.

\end{remark}
\begin{proof}
For $\Re(w)>\frac12+|\sigma|$ we have the Dirichlet expansion
\est{
\zeta\pr{\tfrac12+w+s}\zeta\pr{\tfrac12+w-s}=\sum_{n=1}^\infty\frac{\sigma_{2s}(n)}{n^{\frac12+w+s}},
}
and thus, expressing the exponential as a Mellin transform, we can write $\s\pr{s,z}$ as 
\es{\label{sfh}
\s_0\pr{s,z}&=\frac1{2\pi i}\int_{\pr{\tfrac12+\delta}}\Gamma(w)\zeta\pr{\tfrac12+w+s}\zeta\pr{\tfrac12+w-s}(-2\pi iz)^{-w}dw.\\
}
Now, we apply the functional equation $\zeta(1-s)=\zeta(s)/\chi(s)$, where
\est{
\chi(1-s):=2(2\pi)^{-s}\Gamma\pr{s}\cos\frac{\pi s}2,
}
getting
\est{
\s_0\pr{s,z}&=\frac1{2\pi i}\int_{\pr{-\frac12-\delta}}H(w,s) \zeta\pr{\tfrac12+w-s}\zeta\pr{\tfrac12+w+s}(2\pi i/z)^{-w}dw,\\
}
after the change of variable $w\rightarrow -w$ (notice that in this context we have the equality $\arg(i/z)=-\arg(-iz)$), where 
\es{\label{ffh}
H(w,s)%:=(2\pi)^{2w}\Gamma(-w)\chi\pr{\tfrac12-w+s}\chi\pr{\tfrac12-w-s}\\
&:=\frac{(2\pi)^{2w}\Gamma(-w)}{\chi\pr{\tfrac12+w-s}\chi\pr{\tfrac12+w+s}}\\
&=\frac\pi2\frac{\Gamma(-w)}{\Gamma\pr{\frac12-w-s}\Gamma\pr{\frac12-w+s}}\frac1{\cos\pr{\frac\pi2\pr{\frac12-w-s}}\cos\pr{\frac\pi2\pr{\frac12-w+s}}}\\
&=\pi\frac{\Gamma(-w)}{\Gamma\pr{\frac12-w-s}\Gamma\pr{\frac12-w+s}\sin\pi w}\pr{1-\frac{\cos\pi s}{\sin\pi w+\cos \pi s}}.\\
&=\frac{\Gamma(-w)\Gamma(1-w)\Gamma(w)}{\Gamma\pr{\frac12-w-s}\Gamma\pr{\frac12-w+s}}\pr{1-\frac{\cos\pi s}{\sin\pi w+\cos \pi s}},\\
}
by the reflection formula for the Gamma function~\eqref{rfgf}.
Since on the line of integration we have $\Re(w)<0$, we can use Gauss' hypergeometric formula~\eqref{ghf}, getting
\es{\label{ffh2}
\frac{\Gamma(-w)\Gamma(1-w)\Gamma(w)}{\Gamma\pr{\frac12-w-s}\Gamma\pr{\frac12-w+s}}
&={}_2F_1 \pr{\tfrac12+s,\tfrac12-s;1-w;1}\,\Gamma(w)\\
&=\sum_{j=0}^\infty \frac{Q_{2j}(s)}{j!(1-w)(2-w)\cdots(j-w)}\Gamma(w)\\
&=\sum_{j=0}^\infty (-1)^j\frac{Q_{2j}(s)\Gamma(w-j)}{j!}.
}
Moreover, going backwards,
\est{
\pi\frac{\Gamma(-w)}{\Gamma\pr{\frac12-w-s}\Gamma\pr{\frac12-w+s}\sin\pi w}\frac{\cos\pi s}{\sin\pi w+\cos \pi s}=\frac{(2\pi)^{2w}\Gamma(-w)}{\chi\pr{\tfrac12+w-s}\chi\pr{\tfrac12+w+s}}\frac{\cos\pi s}{\sin\pi w}.
}
Thus,
\est{
H(w,s)=\sum_{j=0}^\infty (-1)^j\frac{Q_{2j}(s)\Gamma(w-j)}{j!}-\frac{(2\pi)^{2w}\Gamma(-w)}{\chi\pr{\tfrac12+w-s}\chi\pr{\tfrac12+w+s}}\frac{\cos\pi s}{\sin\pi w}
}
and so
\est{
\s_0\pr{s,z}=\sum_{j=0}^\infty (-1)^j\frac{Q_{2j}(s)}{j!}\psi_j(s,z)+Z(s,z),
}
where
\est{
\psi_j(s):=\frac1{2\pi i}\int_{\pr{-\frac12-\eps}}\Gamma(w-j) \zeta\pr{\tfrac12+w-s}\zeta\pr{\tfrac12+w+s}(2\pi i/z)^{-w}dw
}
and the change of the order of summation and integration is justified by absolute convergence. Now, by contour integration,
\est{
\psi_j(s)&=\frac1{2\pi i}\int_{\pr{-\frac12-\eps}}\Gamma(w-j) \zeta\pr{\tfrac12+w-s}\zeta\pr{\tfrac12+w+s}(2\pi i/z)^{-w}dw\\
&=\frac1{2\pi i}\int_{\pr{\frac12+j}}\Gamma(w-j)\zeta\pr{\tfrac12+w+s}\zeta\pr{\tfrac12+w-s}(2\pi i/ z)^{-w}dw+{}\\
&\quad-\sum_{\ell=0}^j\frac{(-1)^{j-\ell}}{(j-\ell)!}\zeta\pr{\tfrac12+\ell+s}\zeta\pr{\tfrac12+\ell-s}(2\pi i/ z)^{-\ell}+{}\\
&\quad-\Gamma\pr{\tfrac12+s-j}\zeta\pr{1+2s}(2\pi i/ z)^{-\frac12-s}-\Gamma\pr{\tfrac12-s-j}\zeta\pr{1-2s}(2\pi i/ z)^{-\frac12+s}
}
and~\eqref{mmmf} follows since
\est{
\frac1{2\pi i}\int_{\pr{\frac12+j}}\Gamma(w-j)\zeta\pr{\tfrac12+w+s}\zeta\pr{\tfrac12+w-s}(2\pi i/ z)^{-w}dw%=\\
%&=\pr{\frac z {2\pi i}}^j \frac1{2\pi i}\int_{\pr{\frac12}}\Gamma(w)\zeta\pr{\tfrac12+w+j+s}\zeta\pr{\tfrac12+j+w-s}(2\pi i/ z)^{-w}dw\\
&=z^j\s_j\pr{s,-1/z}.
}
To prove~\eqref{mmmfao}, we can proceed in the following way. We start again form~\eqref{sfh}, but this time we move the line of integration to $\Re(w)=-N-\frac32+\eps$, with $0<\eps<\frac12$, $N\geq0$. We get
\es{\label{ann}
\s\pr{s,z}&=\frac1{2\pi i}\int_{\pr{-N-\frac32+\eps}}\Gamma(w)\zeta\pr{\tfrac12+w+s}\zeta\pr{\tfrac12+w-s}(-2\pi iz)^{-w}dw+{}\\
&\quad+\sum_{j=0}^{N+1}\frac{(-1)^j\zeta\pr{\tfrac12+s-j}\zeta\pr{\tfrac12-s-j}(-2\pi iz)^{j}}{j!}+{}\\
&\quad+\Gamma\pr{\tfrac12+s}\zeta\pr{1+2s}(-2\pi iz)^{-\frac12-s}+\Gamma\pr{\tfrac12-s}\zeta\pr{1-2s}(-2\pi iz)^{-\frac12+s}.
}
Notice that
\est{
\chi\pr{\tfrac12+s-j}\chi\pr{\tfrac12-s-j}%&4(2\pi)^{-1-2j}\cos\pr{\tfrac\pi 2\pr{\tfrac12+j-s}}\cos\pr{\tfrac\pi 2\pr{\tfrac12+j+s}}\Gamma\pr{\tfrac12+j-s}\Gamma\pr{\tfrac12+j+s}\\
&=2(2\pi)^{-1-2j}\cos\pr{\pi s}\Gamma\pr{\tfrac12+j-s}\Gamma\pr{\tfrac12+j+s}\\
%&=2(2\pi)^{-1-2j}\cos\pr{\tfrac\pi 2\pr{\tfrac12+j-s}}\cos\pr{\pi s}Q_{2j}(s)\Gamma\pr{\tfrac12-s}\Gamma\pr{\tfrac12+s}\\
&=(2\pi)^{-2j}Q_{2j}(s)\\
}
by the reflection formula for the Gamma function (and $s\Gamma(s)=\Gamma(s+1)$) and thus the sum on the second line of~\eqref{ann} is equal to
\est{
\sum_{j=0}^{N+1}(-1)^jQ_{2j}(s)\zeta\pr{\tfrac12+s+j}\zeta\pr{\tfrac12-s+j}\frac{(z/2\pi i)^{j}}{j!}.
}
Moreover, using the functional equation and making the change of variable $w\rightarrow-w$, we get that the integral on the right hand side of~\eqref{ann} is equal to
\es{\label{fsfd}
&\frac1{2\pi i}\int_{\pr{N+\frac32-\eps}}H(w,s)\zeta\pr{\tfrac12+w+s}\zeta\pr{\tfrac12+w-s}(2\pi i/z)^{-w}dw.
}
Now, from~\eqref{ffh} we have that
\est{
H(w,s)=\frac{\Gamma(-w)\Gamma(1-w)\Gamma(w)}{\Gamma\pr{\frac12-w-s}\Gamma\pr{\frac12-w+s}}\pr{1+O\pr{e^{-\pi(|w|-|s|)}}}\\
}
and, if $|s|^2=o(|w|)$, $\Re(w),\Re(s)\ll1$ (with $|w-m|\gg1$ for all $m\in\Z$), then by Stirling's formula (see for example~\cite{KP11}) for any $N\geq1$ we have 
\est{
\frac{\Gamma(-w)\Gamma(1-w)}{\Gamma\pr{\frac12-w-s}\Gamma\pr{\frac12-w+s}}&=\exp\bigg(\sum_{j=0}^N \frac{R_{1,j+1}(s)}{w^j}+O_N\pr{\frac{1+|s|^{N+2}}{|w|^{N+1}}}\bigg)\\
&=\sum_{j=0}^N \frac{R_{2,2j}(s)}{w^j}+O_N\pr{\frac{1+|s|^{2N+2}}{|w|^{N+1}}}\\
&=\sum_{j=0}^N \frac{R_{3,2j}(s)}{j!(1-w)\cdots(j-w)}+O_N\pr{\frac{1+|s|^{2N+2}}{|w|^{N+1}}}
}
as $\Im(w)\rightarrow\infty$, for certain polynomials $R_{i,j}$, of degree $j$. Thus, for $|s|^2=o(|w|)$ (and $\Re(w),\Re(s)\ll1$) we have 
\es{\label{gggg}
H(w,s)=\sum_{j=0}^N \frac{Q_{2j}(s)\Gamma(w-j)}{j!}+O_N\pr{\frac{1+|s|^{2N+2}|\Gamma(w)|}{|w|^{N+1}}},
}
since by~\eqref{ffh2} we must have $R_{3,2j}(s)=Q_{2j}(s)$. Moreover, a trivial bound for $H(w,s)$, using Stirling's formula, shows that~\eqref{gggg} holds also when $|w|=O(|s|^{2})$.

Thus, applying~\eqref{gggg} with $N=2N+1$, we get that~\eqref{fsfd} is equal to
\est{
&\frac1{2\pi i}\int_{\pr{N+\frac32-\eps}}H(w,s)\zeta\pr{\tfrac12+w+s}\zeta\pr{\tfrac12+w-s}(2\pi i/z)^{-w}dw=\sum_{j=0}^{2N+1} \frac{Q_{2j}(s)\Theta_j(z,s)}{j!}+\mathcal{R}_{2N+1}(z,s),
}
where
\es{\label{erty}
\Theta_j(z,s)&:=\frac1{2\pi i}\int_{\pr{N+\frac32-\eps}}\Gamma(w-j)\zeta\pr{\tfrac12+w+s}\zeta\pr{\tfrac12+w-s}(2\pi i/z)^{-w}dw\\
&=z^j\s_j\pr{s,-1/z}+\sum_{N+2\leq r\leq j}\frac{(-1)^{j-r}\zeta\pr{\tfrac12+r+s}\zeta\pr{\tfrac12+r-s}(-2\pi iz)^{r}}{(j-r)!}.\\
}
The function $\mathcal{R}_{2N+1}(z,s)$ is holomorphic in $\Im(z)>0$ and $\mathcal C^{N}$ in $\R$, since for $\ell\leq N$ and $0\leq\arg z\leq\pi$ we have
\est{
\mathcal{E}_{2N+1}^{(\ell)} (z,s)&\ll |z|^{N-\ell+\frac32-\eps}\int_{\pr{N+1+\eps}}|\zeta\pr{\tfrac12+w+s}\zeta\pr{\tfrac12+w-s}|\frac{1+|s|^{4N+4}}{|w|^{N-\ell+\eps}}|dw|\\
&\ll \pr{1+|s|^{4N+4}}|z|^{N-\ell+\frac32-\eps}.
}
Thus, we have
\est{
\s\pr{s,z}&=\sum_{j=0}^{2N+1} \frac{Q_{2j}(s)}{j!}z^j\s_j\pr{s,-1/z}+\sum_{j=0}^{2N+1}(-1)^{j} \frac{Q_{2j}(s)}{j!}\sum_{r=N+2}^j\frac{\zeta\pr{\tfrac12+r+s}\zeta\pr{\tfrac12+r-s}(2\pi iz)^{r}}{(j-r)!}+{}\\
&\quad+\mathcal{R}_{2N+1}(z,s)+\sum_{j=0}^{N+1}(-1)^jQ_{2j}(s)\zeta\pr{\tfrac12+s+j}\zeta\pr{\tfrac12-s+j}\frac{(z/2\pi i)^{j}}{j!}+{}\\
&\quad+\Gamma\pr{\tfrac12+s}\zeta\pr{1+2s}(-2\pi iz)^{-\frac12-s}+\Gamma\pr{\tfrac12-s}\zeta\pr{1-2s}(-2\pi iz)^{-\frac12+s}.
}
Thus,~\eqref{mmmfao} and the bound~\eqref{ffb1} follow by writing
\est{
\mathcal{E}_{N}(s,z)&:=\frac{Q_{2(N+1)}(s)}{(N+1)!}z^{2N+1}\s_{2N+1}\pr{s,-1/z}-\sum_{j=N+2}^{2N}(-1)^jQ_{2j}(s)\zeta\pr{\tfrac12+s+j}\zeta\pr{\tfrac12-s+j}\frac{(z/2\pi i)^{j}}{j!}+{}\\
&\quad+\sum_{j=0}^{2N+1} \frac{Q_{2j}(s)}{j!}\sum_{N+2\leq r\leq j}\frac{(-1)^{j-r}\zeta\pr{\tfrac12+r+s}\zeta\pr{\tfrac12+r-s}(-2\pi iz)^{r}}{(j-r)!}+\mathcal{R}_{2N+1}(z,s).
}
(If $N=0$ the second addend on the first line has to be replaced by $-Q_{2}(s)\zeta\pr{\tfrac32+s}\zeta\pr{\tfrac32-s}\frac {z}{2\pi i}$). The bound~\eqref{ffb2} follows by applying~\eqref{mmmfao} with $2N$ in place of $N$ and bounding trivially the extra terms in the sum.
\end{proof}

\begin{prop}\label{cfd}

Let $a,q\in\Z_{>0}$, with $(a,q)=1$ and let $|\Re(s)|<\frac12$, $s\neq0$. Then
\es{\label{ecfd}
&D_0(s+\tfrac12,\pm\tfrac aq)=\\
&=\sum_{j=0}^\infty \frac{(-1)^jQ_{2j}(s)}{j!}\bigg(\pr{\frac{\pm a}{2\pi iq}}^jD_j(s+\tfrac12,\mp\tfrac qa)-\sum_{\ell=0}^j\frac{(-1)^{j-\ell}}{(j-\ell)!}\zeta\pr{\tfrac12+\ell+s}\zeta\pr{\tfrac12+\ell-s}\pr{\frac {\pm a}{2\pi iq}}^{\ell}+\\
&\hspace{1cm}-\Gamma\pr{\tfrac12+s-j}\zeta\pr{1+2s}( 2\pi q/a)^{-(\frac12+s)}e^{\mp(\frac12+s)\frac {\pi i}2}+\\
&\hspace{1cm}-\Gamma\pr{\tfrac12-s-j}\zeta\pr{1-2s}( 2\pi q/a)^{-(\frac12-s)}e^{\mp(\frac12-s)\frac {\pi i}2}\bigg)+Z_\pm\pr{s,\frac aq},
}
where for  $|\Re(s)|<\delta<\frac12$
\est{
Z_\pm(s,z)&:=\frac1{2\pi i}\int_{\pr{\frac12+\delta}}\Gamma(w) \frac{\cos\pi s}{\sin\pi w}\zeta\pr{\tfrac12+w-s}\zeta\pr{\tfrac12+w+s}(2\pi z)^{-w}e^{\pm\frac {\pi i w}2}dw.\\
}
Moreover, for $N\geq0$
\es{\label{mmmfa}
&D_0(s+\tfrac12,\pm \tfrac aq)-\mathcal E_N\pr{s,\pm \tfrac aq}=\\
&=\sum_{j=0}^{2N} \frac{(-1)^jQ_{2j}(s)}{j!}\bigg(\pr{\frac{\pm a}{2\pi iq}}^jD_j(s+\tfrac12,\mp \tfrac qa)+\frac{\zeta\pr{\tfrac12+j+s}\zeta\pr{\tfrac12+j-s}}{j!}\pr{\frac{\pm a}{2\pi iq}}^j\bigg)+{}\\
&\quad+\Gamma\pr{\tfrac12+s}\zeta\pr{1+2s}(2\pi a/q)^{-(\frac12+s)}e^{\pm(\frac12+s)\frac {\pi i}2} +\Gamma\pr{\tfrac12-s}\zeta\pr{1-2s}(2\pi a/q)^{-(\frac12-s)}e^{\pm(\frac12-s)\frac {\pi i}2},
}
where $\mathcal E_N\pr{s,x}$ is as in Lemma~\ref{rtfs}.

\end{prop}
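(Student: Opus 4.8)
The plan is to transfer the reciprocity formula of Lemma~\ref{rtfs} for the twisted Eisenstein series to the Estermann function, using the elementary dictionary $D_j(s+\tfrac12,z)=(2\pi i)^j\s_j(s,z)$, which is immediate upon comparing the Dirichlet series defining $D_j(s+\tfrac12,z)=D(s+\tfrac12+j,2s;z)$ and $\s_j(s,z)$. Since~\eqref{mmmf} and~\eqref{mmmfao} are valid only for $0<\arg z<\pi$, whereas we must evaluate at the real points $\pm\tfrac aq$, I would let $z\to\pm\tfrac aq$ from inside the open upper half-plane. On the left, the boundary value of $\s_0(s,z)$ is identified with $D_0(s+\tfrac12,\pm\tfrac aq)$ via the analytic continuation of the Estermann function, which is legitimate for $|\Re(s)|<\tfrac12$, $s\neq0$, where the Estermann poles at $s=\pm\tfrac12$ are avoided. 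The essential observation is that $-1/z\to\mp\tfrac qa$ while staying in the upper half-plane: if $z=\pm\tfrac aq+i\eps$ then $-1/z=\mp\tfrac qa+i\eps'$ with $\eps'>0$, so that the leading term transforms as $z^j\s_j(s,-1/z)\to(\pm\tfrac aq)^j\s_j(s,\mp\tfrac qa)=\pr{\tfrac{\pm a}{2\pi iq}}^jD_j(s+\tfrac12,\mp\tfrac qa)$, exactly as in~\eqref{ecfd} and~\eqref{mmmfa}.

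The next step is the branch bookkeeping for the remaining terms, using the conventions of Remark~\ref{rkexp}. For $z=\tfrac aq$ one has $\log z=\log\tfrac aq$, giving $(2\pi i/z)^{-(\frac12\pm s)}=(2\pi q/a)^{-(\frac12\pm s)}e^{-(\frac12\pm s)\frac{\pi i}2}$; for $z=-\tfrac aq$ the value $\log z=\log\tfrac aq+\pi i$ flips the phase to $e^{+(\frac12\pm s)\frac{\pi i}2}$, and together these produce the factors $e^{\mp(\frac12\pm s)\frac{\pi i}2}$ in the Gamma terms of~\eqref{ecfd}. An identical computation with $(-2\pi iz)^{-(\frac12\pm s)}$ gives the factors $e^{\pm(\frac12\pm s)\frac{\pi i}2}$ and the base $(2\pi a/q)^{-(\frac12\pm s)}$ appearing in~\eqref{mmmfa}, while applying the same rule to $(-2\pi iz)^{-w}$ turns the integrand of $Z(s,z)$ into $(2\pi\tfrac aq)^{-w}e^{\pm\frac{\pi iw}2}$, i.e. precisely $Z_\pm(s,\tfrac aq)$. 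The finite polynomial corrections $\sum_\ell\cdots\pr{\tfrac{z}{2\pi i}}^\ell$ involve only ordinary powers of $\pm\tfrac aq$ and pass to the limit directly. For the approximate formula~\eqref{mmmfa} I would start instead from~\eqref{mmmfao}; since $\mathcal E_N(s,z)$ is $\mathcal C^N$ up to $\Im(z)=0$, it converges to its boundary value $\mathcal E_N(s,\pm\tfrac aq)$, which is the quantity subtracted on the left of~\eqref{mmmfa}.

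The main obstacle is justifying the termwise passage to the boundary in the infinite series of~\eqref{mmmf}, which is delicate precisely because (as noted after Theorem~\ref{mt}) the series is not uniformly convergent over all of $\R_{\geq0}$. I would handle this by working with the contour-integral representation of the bracketed block: that block is exactly the function $\psi_j(s,z)=\tfrac1{2\pi i}\int_{(-\frac12-\eps)}\Gamma(w-j)\zeta(\tfrac12+w-s)\zeta(\tfrac12+w+s)(2\pi i/z)^{-w}\,dw$ appearing in the proof of Lemma~\ref{rtfs}, and this representation extends continuously to $z=\pm\tfrac aq$ and can be bounded uniformly for $z$ ranging over a small half-disk around a fixed point $\pm\tfrac aq\neq0$ of the closed upper half-plane. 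The resulting local uniform convergence legitimises exchanging the limit with the summation, completing the passage to the real boundary. The approximate identity~\eqref{mmmfa}, being a finite sum together with a remainder continuous up to the boundary, presents no such difficulty.
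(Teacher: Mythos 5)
Your overall plan---specialising Lemma~\ref{rtfs} at $z$ in the open upper half-plane, letting $z\to\pm\tfrac aq$ vertically, and using the dictionary $D_j(s+\tfrac12,\cdot)=(2\pi i)^j\s_j(s,\cdot)$ together with the branch conventions of Remark~\ref{rkexp}---is indeed the paper's route, and your phase computations for the Gamma terms, for $Z_\pm$, and your treatment of the blocks with $j\geq1$ (where the series for $\s_j(s,\cdot)$ genuinely converges on $\Im(z)\geq0$ when $|\Re(s)|<\tfrac12$) are correct. But there is a genuine gap at $j=0$, and it is exactly the point the paper has to work for. The boundary value of $\s_0(s,z)$ as $z=\pm\tfrac aq(1\pm i\eps)$, $\eps\to0^+$, is \emph{not} $D_0(s+\tfrac12,\pm\tfrac aq)$: writing $\s_0(s,z)$ as a Mellin transform of $D(s+w+\tfrac12,2s;\pm\tfrac aq)$ and shifting the contour past the poles at $w=\tfrac12\mp s$, the paper's~\eqref{inz} gives
\est{
\s_0\pr{s,\pm\tfrac aq(1\pm i\eps)}=D_0\pr{s+\tfrac12,\pm\tfrac aq}+\Gamma\pr{\tfrac12-s}\zeta(1-2s)(2\pi aq\eps)^{-\frac12+s}+\Gamma\pr{\tfrac12+s}\zeta(1+2s)(2\pi aq\eps)^{-\frac12-s}+O(\eps^{\xi}),
}
and since $|\Re(s)|<\tfrac12$ both middle terms blow up as $\eps\to0$ (consistently with the fact, recorded after the definition of $\s_j$, that the series for $\s_0$ converges for no real $z$). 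The same divergence infects the $j=0$ bracket on the right of~\eqref{mmmf} through $\s_0(s,-1/z)$. The paper therefore does not take termwise boundary values; it computes the divergent parts of $\s_0(s,z)$ and of $\s_0(s,-1/z)$ separately (\eqref{inz} and~\eqref{in1z}) and observes that they are \emph{identical}---both equal $\Gamma(\tfrac12\mp s)\zeta(1\mp2s)(2\pi aq\eps)^{-\frac12\pm s}$, because $\Im(-1/z)=\Im(z)/|z|^2$ turns $\tfrac aq\eps$ into $\tfrac qa\eps$ while the residues carry $q^{-1\pm2s}$ and $a^{-1\pm2s}$ respectively---so that they cancel in the difference, leaving $D_0(s+\tfrac12,\pm\tfrac aq)-D_0(s+\tfrac12,\mp\tfrac qa)$ up to a positive power of $\eps$. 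Your proposal omits this cancellation entirely.

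Your proposed repair via the contour representation fails at the same spot: on the line $\Re(w)=-\tfrac12-\delta$, for real positive $z$ one has $|(2\pi i/z)^{-w}|=e^{\frac\pi2\Im w}\,(2\pi/z)^{\frac12+\delta}$, which for $\Im w\to+\infty$ exactly offsets the decay $|\Gamma(w)|\asymp|\Im w|^{\Re w-\frac12}e^{-\frac\pi2|\Im w|}$; with the polynomially growing zeta factors the integrand of $\psi_0(s,z)$ is then not integrable, so $\psi_0$ admits no continuous extension to the real axis and certainly no uniform bound on a closed half-disk around $\pm\tfrac aq$. (For $j\geq2$ the extra decay $|\Im w|^{-j}$ from $\Gamma(w-j)$ restores absolute convergence, and for $j=1$ one can instead invoke the absolutely convergent series for $\s_1$ on $\Im(z)\geq0$; so a suitably restated version of your uniformity argument does legitimise exchanging the limit with the sum over $j\geq1$, which is what the paper implicitly does. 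The same $j=0$ issue recurs in your deduction of~\eqref{mmmfa} from~\eqref{mmmfao}.) To repair your write-up you must isolate and cancel the two $\eps^{-\frac12\pm s}$ singularities at $j=0$ as in~\eqref{inz}--\eqref{in1z}; with that inserted, the rest of your argument (branch bookkeeping, $Z_\pm$, and the boundary continuity of $\mathcal E_N$) goes through.
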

\begin{proof}
Let $\eps>0$ and let $z=\pm \frac aq\pr{1\pm i\eps}$. Then, we have
\est{
\s\pr{s,z}&=\sum_{n\geq1}\e{ in\tfrac aq \eps}\e{\pm n\tfrac aq}\frac{\sigma_{2s}(n)}{n^{s+\frac12}}\\
&=\frac1{2\pi i}\int_{(2)}\Gamma(w)D\pr{s+w+\tfrac12,2s,\pm \tfrac aq}(2\pi \tfrac aq\eps)^{-w}dw.\\
}
Now, $D\pr{s+w+\frac12,2s,\pm \frac aq}$ has singularities at $w=\frac12\mp s$, with residues $q^{-1+2s}\zeta(1-2s)$ and $q^{-1-2s}\zeta(1+2s)$ respectively. Thus, moving the line to $\Re(w)=-\xi$ for some $0<\xi<1$, we get
\es{\label{inz}
\s\pr{s,\pm \tfrac aq\pr{1\pm i\eps}}&=D_0\pr{s+\tfrac12,\pm \tfrac aq}+\Gamma\pr{\tfrac12-s}\zeta(1-2s)( 2\pi  aq\eps)^{-\frac12+s}+\\
&\quad+\Gamma\pr{\tfrac12+s}\zeta(1+2s)(2\pi  aq\eps)^{-\frac12-s}+O_{s,\frac aq}(\eps^\xi).\\
}
In the same way, we can write $-\frac1z=\mp \frac qa\pr{1\mp i\eps'}$, $\eps'=\frac{ \eps}{1\pm i\eps}=\eps+O(\eps^2)$ and thus 
\es{\label{in1z}
\s\pr{s,-\tfrac 1z}&=D_0\pr{s+\tfrac12,\mp \tfrac qa}+\Gamma\pr{\tfrac12-s}\zeta(1-2s)(2\pi  qa\eps)^{-\frac12+s}\pr{1+O_s(\eps)}+{}\\
&\quad+\zeta(1+2s)\Gamma\pr{\tfrac12+s}(2\pi  qa\eps)^{-\frac12-s}(1+O_s(\eps))+O_{s,\frac aq}(\eps^\xi)\\
&=D_0\pr{s+\tfrac12,-\tfrac qa}+\Gamma\pr{\tfrac12-s}\zeta(1-2s)(2\pi  qa\eps)^{-\frac12+s}+{}\\
&\quad+\zeta(1+2s)\Gamma\pr{\tfrac12+s}(2\pi  qa\eps)^{-\frac12-s}+O_{s,\frac aq}(\eps^\xi+\eps^{\frac12-|\sigma|}).\\
}
Therefore, from~\eqref{inz} and~\eqref{in1z} it follows that
\est{
\s\pr{s,z}-\s\pr{s,-\tfrac 1z}&=D\big(s+\tfrac12,2s,\pm \tfrac aq\big)-D\big(s+\tfrac12,2s,\mp \tfrac qa\big)+O_{s,\tfrac aq}(\eps^\varepsilon).\\
}
Moreover, by Remark~\ref{rkexp} we have
\est{
\lim_{\eps\rightarrow 0}(2\pi i/ z)^{-\frac12-s}&=e^{-(\frac12+s)\log 2\pi \frac q{a}-(\frac12+s)(\frac{\pi i} 2+\pi i(\pm1-1)/2}\\
&=( 2\pi q/a)^{-(\frac12+s)}e^{\mp(\frac12+s)\frac {\pi i}2}\\
\lim_{\substack{\eps\rightarrow 0}}(-2\pi i z)^{-\frac12-s}&=e^{-(\frac12+s)\log 2\pi \frac {a}{q}-(\frac12+s)(-\frac{\pi i} 2-\pi i(\sgn a-1)/2}\\
&=(2\pi a/q)^{-(\frac12+s)}e^{\pm (\frac12+s)\frac {\pi i}2}\\
}
and similarly $\lim_{\eps\rightarrow 0} Z(s,\pm a/q)=Z_{\pm}(s,a/q)$. Equation~\eqref{ecfd} then follows by letting $\eps$ go to zero in~\eqref{mmmf}, upon noticing that for $j\geq1$ and $|\Re(s)|<\frac12$ we have $\s_j\pr{s,\mp\frac qa}=\frac1{(2\pi i)^j}D_j\pr{s+\frac12,\mp\frac qa}$.

Equation~\eqref{mmmfa} follows in the same way.
\end{proof}

Taking $s\rightarrow 0$, Proposition~\ref{cfd} gives the following corollary. 

\begin{corol}\label{mcfd}
 Let $a,q\in\Z_{>0}$, with $(a,q)=1$. Let $|\Re(s)|<\frac12$. Then
\es{\label{ftr1}
D_0(\tfrac12,\pm\tfrac aq)
&=\frac1{\pi}\sum_{j=0}^\infty \frac{(-1)^j\Gamma(\tfrac12+j)^2}{j!}\bigg(\pr{\frac{\pm a}{2\pi iq}}^jD_j(\tfrac12,\mp\tfrac qa)-\sum_{\ell=0}^j\frac{(-1)^{j-\ell}}{(j-\ell)!}\zeta\pr{\tfrac12+\ell}^2\pr{\frac {\pm a}{2\pi iq}}^{\ell}+\\
&\quad-\frac12\Gamma(\tfrac12-j)( \pi q/a)^{-\frac12}(1\mp i)(\Psi(\tfrac12-j)+2\gamma-\log( 2\pi q/a)\mp\frac{\pi i}{2})\bigg)+Z_\pm\pr{0,\frac aq}
}
and
\es{\label{ftr}
&D_0(\tfrac12,\pm \tfrac aq)-\mathcal E_N\pr{0,\pm \tfrac aq}=\\
&=\frac1{\pi}\sum_{j=0}^{2N} \frac{(-1)^j\Gamma(\tfrac12+j)^2}{j!}\bigg(\pr{\frac{\pm a}{2\pi iq}}^jD_j(\tfrac12,\mp \tfrac qa)+\frac{\zeta\pr{\tfrac12+j}^2}{j!}\pr{\frac{\pm a}{2\pi iq}}^j\bigg)+{}\\
&\quad+\frac12\pr{\frac {q}{a}}^{\frac12}  (\log (q/a)+\gamma -\log 8\pi -\frac {\pi }2)\pm \frac i2\pr{\frac {q}{a}}^{\frac12}  (\log (q/a)+\gamma -\log 8\pi +\frac {\pi }2).
}
\end{corol}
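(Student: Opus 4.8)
The plan is to deduce both formulae by letting $s\to0$ in Proposition~\ref{cfd}, the only subtlety being that two of the terms develop simple poles at $s=0$ whose residues cancel. I would first record which pieces pass to the limit without trouble. Since $Q_{2j}(s)=\prod_{\ell=0}^{j-1}\pr{(\tfrac12+\ell)^2-s^2}$, we have $Q_{2j}(0)=\prod_{\ell=0}^{j-1}(\tfrac12+\ell)^2=\Gamma(\tfrac12+j)^2/\Gamma(\tfrac12)^2=\Gamma(\tfrac12+j)^2/\pi$, which accounts for the prefactor $\tfrac1\pi\frac{(-1)^j\Gamma(\frac12+j)^2}{j!}$ appearing in both \eqref{ftr1} and \eqref{ftr}. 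The terms $\pr{\frac{\pm a}{2\pi iq}}^jD_j(s+\tfrac12,\mp\tfrac qa)$, the finite sums over $\zeta(\tfrac12+\ell+s)\zeta(\tfrac12+\ell-s)$ (regular at $s=0$, since $\tfrac12+\ell\neq1$ for $\ell\in\Z_{\geq0}$), and the contour integral $Z_\pm(s,\tfrac aq)$ (whose integrand converges locally uniformly on $\Re(w)=\tfrac12+\delta$) each tend to their value at $s=0$; interchanging the limit with the infinite sum in \eqref{ecfd} is justified by local uniform convergence in $s$ near $0$.

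The heart of the matter is the pair of terms carrying $\zeta(1+2s)$ and $\zeta(1-2s)$, each of which has a simple pole at $s=0$ inherited from the pole of $\zeta$ at $1$. Writing these two terms in \eqref{ecfd} as $-f(s)\zeta(1+2s)-f(-s)\zeta(1-2s)$ with
\[
f(s):=\Gamma\pr{\tfrac12+s-j}(2\pi q/a)^{-(\frac12+s)}e^{\mp(\frac12+s)\frac{\pi i}2},
\]
and inserting the Laurent expansions $\zeta(1\pm2s)=\pm\frac1{2s}+\gamma+O(s)$, the polar parts combine into $-\frac{f(s)-f(-s)}{2s}\to-f'(0)$ while the constant parts give $-2\gamma f(0)$, so the limit equals $-\pr{f'(0)+2\gamma f(0)}$. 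Logarithmic differentiation, $f'(0)/f(0)=\Psi(\tfrac12-j)-\log(2\pi q/a)\mp\frac{\pi i}2$, together with the identity $(2\pi q/a)^{-1/2}e^{\mp\pi i/4}=\tfrac12(\pi q/a)^{-1/2}(1\mp i)$, then reproduces exactly the bracketed term in \eqref{ftr1}.

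The truncated formula \eqref{ftr} follows in the same way from \eqref{mmmfa}: the two boundary terms are now $g(s)\zeta(1+2s)+g(-s)\zeta(1-2s)$ with $g(s):=\Gamma\pr{\tfrac12+s}(2\pi a/q)^{-(\frac12+s)}e^{\pm(\frac12+s)\frac{\pi i}2}$, and the identical residue computation gives the limit $g'(0)+2\gamma g(0)=g(0)\pr{\Psi(\tfrac12)+2\gamma-\log(2\pi a/q)\pm\frac{\pi i}2}$. Here the special value $\Psi(\tfrac12)=-\gamma-2\log2$ collapses the constant into $\log(q/a)+\gamma-\log8\pi\pm\frac{\pi i}2$, and since $g(0)=\tfrac12(q/a)^{1/2}(1\pm i)$, separating $(1\pm i)\pr{\log(q/a)+\gamma-\log8\pi\pm\frac{\pi i}2}$ into real and imaginary parts yields precisely the last line of \eqref{ftr}. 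The main obstacle is thus purely bookkeeping: checking the residue cancellation and, in particular, invoking $\Psi(\tfrac12)=-\gamma-2\log2$ to merge $\gamma$, $2\log2$ and $\log2\pi$ into the single constant $\log8\pi$.
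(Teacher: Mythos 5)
Your proposal is correct and follows essentially the same route as the paper: letting $s\to0$ in Proposition~\ref{cfd} (resp.\ in~\eqref{mmmfa}), noting $Q_{2j}(0)=\Gamma(\tfrac12+j)^2/\pi$, and exploiting the cancellation of the simple poles of $\zeta(1\pm2s)$ in the paired boundary terms. The paper carries out this cancellation by multiplying out explicit Taylor expansions of each factor, whereas you package the same computation as $\mp\pr{f'(0)+2\gamma f(0)}$ via logarithmic differentiation, including the use of $\Psi(\tfrac12)=-\gamma-2\log2$ to produce the constant $\log 8\pi$ --- a purely cosmetic difference.
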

\begin{proof}
As $s\rightarrow0$, we have
\est{
&\Gamma\pr{\tfrac12+s-j}\zeta\pr{1+2s}( 2\pi q/a)^{-(\frac12+s)}e^{\mp(\frac12+s)\frac {\pi i}2}+\Gamma\pr{\tfrac12-s-j}\zeta\pr{1-2s}( 2\pi q/a)^{-(\frac12-s)}e^{\mp(\frac12-s)\frac {\pi i}2}\\
&=\Gamma(\tfrac12-j)(1+\Psi(\tfrac12-j)s)(\tfrac{1}{2s}+\gamma )( 2\pi q/a)^{-\frac12}(1-\log( 2\pi q/a) s)e^{\mp \frac {\pi i}4}(1\mp\frac{\pi i}{2}s)+{}\\
&\quad+\Gamma(\tfrac12-j)(1-\Psi(\tfrac12-j)s)(-\tfrac{1}{2s}+\gamma )( 2\pi q/a)^{-\frac12}(1+\log( 2\pi q/a) s)e^{\mp \frac {\pi i}4}(1\pm\frac{\pi i}{2}s)+O(s)\\
%&=\Gamma(\tfrac12-j)( 2\pi q/a)^{-\frac12}e^{\mp \frac {\pi i}4}(\Psi(\tfrac12-j)+2\gamma-\log( 2\pi q/a)\mp\frac{\pi i}{2})+O(s)\\
&=\frac12\Gamma(\tfrac12-j)( \pi q/a)^{-\frac12}(1\mp i)(\Psi(\tfrac12-j)+2\gamma-\log( 2\pi q/a)\mp\frac{\pi i}{2})+O(s)\\
}
Thus,~\eqref{ftr} follows by taking $s\rightarrow 0$ in~\eqref{mmmfa} and noticing
\est{
Q_{2j}(0)=(\tfrac12(\tfrac12+1)\cdots (\tfrac12+j-1))^2=\frac{\Gamma(\tfrac12+j)^2}{\pi}.
}

Equation~\eqref{ftr1} follows in the same way from~\eqref{ecfd}.

\end{proof}
We can now deduce Theorem~\ref{mt}. The proof of Theorem~\ref{mtc} is analogous and Theorem~\ref{et} is a trivial corollary of Theorem~\ref{mtc}.
\begin{proof}[Proof of Theorem~\ref{mt}]
By Corollary~\ref{ctff}, we have
\es{\label{adae}
M_0^*\pr{\pm a,q}
&=\frac12(1-i)D\pr{\tfrac12,0;\pm \tfrac aq}+\tfrac12(1+i)D\pr{\tfrac12,0;\mp\tfrac aq}.\\
}
We want to apply~\eqref{ftr1}, but first we need to make a few computations. First, we observe that
\es{\label{asd1}
&\frac12(1-i)i^{- \ell}+\frac12(1+i)i^{\ell}=2^\frac12\cos\pr{\tfrac{\pi}{2}(\ell+\tfrac12)},\\
}
and, by Theorem~\ref{tff},
\es{\label{asd2}
&\frac12(1-i)D(\tfrac12+j,0,\mp\tfrac qa)i^j +\frac12(1+i)D(\tfrac12+j,0,\pm\tfrac qa)i^{-j}=\pi^{\frac12}\frac{(2\pi)^{j}}{\Gamma(\tfrac12+j)}M_j^*\pr{\mp q,a}.
}
Moreover, we have that 
\es{\label{asd3}
&-\frac12(1-i)\pr{\tfrac aq}^{\frac12}\pr{\tfrac{1\mp i}2}(\Psi(\tfrac12-j)+2\gamma-\log( 2\pi q/a)\mp\tfrac{\pi i}{2})+{}\\
&\hspace{2cm}-\frac12(1+i)\pr{\tfrac aq}^{\frac12}\pr{\tfrac{1\pm i}2}(\Psi(\tfrac12-j)+2\gamma-\log( 2\pi q/a)\pm\tfrac{\pi i}{2})=r_{\pm,j}(\tfrac aq),\\
}
where
\est{
r_{\pm,j}(z):=z^{\frac12}\times \begin{cases}
\frac{\pi }{2}& \tn{if }\pm =+,\\
(\log( 2\pi/z)-\Psi(\tfrac12-j)-2\gamma)& \tn{if }\pm =-.\\
\end{cases}
}
Finally,
\est{
&\frac12(1-i)Z_\pm\pr{0,z}+\frac12(1+i)Z_\mp\pr{0,z}=\\
&\qquad=2^\frac12\frac1{2\pi i}\int_{\pr{\frac34}}\Gamma(w) \frac{1}{\sin\pi w}\zeta\pr{\tfrac12+w}^2(2\pi z)^{-w}\cos\pr{\tfrac{\pi}{2}(w\mp\tfrac12)}dw\\
&\qquad=\frac1{2\pi i}\int_{\pr{\frac34}} \frac{\Gamma(w)}{\sin\pi w}\zeta\pr{\tfrac12+w}^2(2\pi z)^{-w}(\cos\pr{\tfrac{\pi}{2}w}\pm \sin\pr{\tfrac{\pi}{2}w})dw.\\
}
We move the line of integration to $\Re(w)=-\frac12$, passing through poles at $w=\frac12$ and at $w=0$. A quick computation shows that the residue at $w=\frac12$ is equal to $-r_{\mp,0}(1/z)$, whereas the residue at $z=0$ is
\est{
g_{\pm}(z):=\zeta(\tfrac12)^2(\tfrac{1}2\mp \tfrac12-\tfrac1\pi \log ( z/4)),
}
since  $\frac{\zeta'}\zeta(\frac12)=\frac12 ( \gamma +\pi/2+\log(8\pi))$. Thus,
\es{\label{asd4}
\frac12(1-i)Z_\pm\pr{0,z}+\frac12(1+i)Z_\mp\pr{0,z}=W_{\pm}(z)+g_{\pm}(z)-r_{\mp,0}(1/z).
}

We can now apply~\eqref{ftr1}, and by~\eqref{asd1}-\eqref{asd4} we get
\est{
M_0^*\pr{\pm a,q}&=\frac1{\pi}\sum_{j=0}^\infty \frac{(-1)^j\Gamma(\tfrac12+j)}{j!}\bigg(\pr{\frac{\pm a}{q}}^j\pi^{\frac12}M_j^*\pr{\mp q,a}+(-1)^j\pi^\frac12r_{\pm,j}(\tfrac aq)+{}\\ 
&\quad-\sum_{\ell=0}^j\frac{(-1)^{j-\ell}\Gamma(\tfrac12+j)}{(j-\ell)!}\zeta\pr{\tfrac12+\ell}^2\pr{\frac {\pm a}{2\pi q}}^{\ell}2^\frac12\cos\pr{\tfrac{\pi}{2}(\ell+\tfrac12)}\bigg)+W_{\pm}(\tfrac aq)+g_{\pm}(\tfrac aq)-r_{\pm,0}(\tfrac qa),\\
}
where we also used the reflection formula for the Gamma function. Thus, Theorem~\ref{mt} follows by the functional equation for the Riemann zeta function
\est{
\zeta(\tfrac12-\ell)=2(2\pi)^{-(\frac12+\ell)}\Gamma(\tfrac12+\ell)\cos	\pr{\tfrac{\pi}{2}(\tfrac12+\ell)}\zeta(\tfrac12+\ell)
}
and the identities
\est{
\binom{j-\frac12}{\ell-\frac12}=\frac{\Gamma(\tfrac12+j)}{(j-\ell)!\Gamma(\frac12+\ell)},\qquad \binom{j-\frac12}{j}=\frac{\Gamma(\tfrac12+j)}{\pi^\frac12\Gamma(j)}.\\
}
\end{proof}

\section{Proof of Theorem~\ref{Ypo} and of Theorem~\ref{cffd}}
Since the reciprocity formula for $M_0(a,q)$ holds only for $a,q$ prime, first we need to prove the analogue of Theorem~\ref{Ypo} for the central value of the Estermann function.

\begin{lemma}\label{lmfil}
Let $a,q>0$ with $(a,q)=1$. Let $[b_0;b_1,\cdots,b_{\kappa}]$ be the continued fraction expansion of $a/q$ and let $v_j$ be the $j$-th partial denominator. Then
 \es{\label{prefo1}
D_0\pr{\tfrac12,\pm \tfrac {a}{q}}&= \zeta\pr{\tfrac12}^2({\kappa}+1)+\frac12\sum_{j=1}^{\kappa}\pr{\frac {v_{j}}{v_{j-1}}}^{\frac12}\pr{\log \frac {v_{j}}{v_{j-1}}+\gamma -\log 8\pi -\frac {\pi }2}+{}\\
&\quad\mp\frac i2\sum_{j=1}^{\kappa}(-1)^{j}\pr{\frac {v_{j}}{v_{j-1}}}^{\frac12}\pr{\log \frac {v_{j}}{v_{j-1}}+\gamma -\log 8\pi +\frac {\pi }2}+\sum_{j=1}^{\kappa} \mathcal E\pr{\pm  (-1)^{j}\frac {v_{j-1}}{v_{j}}},
}
where $\mathcal E(x)$ is a continuous function on $\R$ satisfying  $\mathcal E\pr{x}\ll x$ for $x\ll 1$.
\end{lemma}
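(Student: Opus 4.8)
The plan is to establish~\eqref{prefo1} by iterating the one-step reciprocity for the central value of the Estermann function, namely Corollary~\ref{mcfd} with $N=0$, and descending along the continued fraction of $a/q$ by means of the periodicity of $D_0$. Abbreviating $C_\mp:=\gamma-\log 8\pi\mp\tfrac\pi2$ and $\mathcal E(x):=\mathcal E_0(0,x)$, equation~\eqref{ftr} with $N=0$ (using $\Gamma(\tfrac12)^2=\pi$) collapses to the clean recurrence
\[
D_0\pr{\tfrac12,\pm\tfrac aq}=\zeta\pr{\tfrac12}^2+D_0\pr{\tfrac12,\mp\tfrac qa}+\tfrac12\pr{\tfrac qa}^{\frac12}\pr{\log\tfrac qa+C_-}\pm\tfrac i2\pr{\tfrac qa}^{\frac12}\pr{\log\tfrac qa+C_+}+\mathcal E\pr{\pm\tfrac aq}.
\]
The essential observation is that the reciprocal argument $\mp q/a$ can be reduced modulo $1$: since $D_0(\tfrac12,x)$ has period $1$ (immediate from $\e{n(x+1)}=\e{nx}$), writing $q=b_1a+a'$ with $0\le a'<a$ gives $D_0(\tfrac12,\mp q/a)=D_0(\tfrac12,\mp a'/a)$, again of the same shape but with the smaller pair $(a',a)$ and the opposite sign.

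I would then set up an induction on the length $\kappa$ of the continued fraction. Each application of the recurrence peels off one partial quotient, emits one copy of $\zeta(\tfrac12)^2$, one main term depending on the current ratio $q/a$, and one error $\mathcal E(\pm a/q)$, and flips the sign of the argument; after $\kappa$ steps the Euclidean descent $q_0=q$, $q_1=a$, $q_{j-1}=b_jq_j+q_{j+1}$ reaches $q_\kappa=(a,q)=1$ and the argument becomes an integer, so the process terminates at $D_0(\tfrac12,0)=\sum_{n\ge1}\sigma_0(n)n^{-1/2}=\zeta(\tfrac12)^2$. This accounts for the $\kappa+1$ copies of $\zeta(\tfrac12)^2$ in~\eqref{prefo1}: one from each of the $\kappa$ steps and one from the terminal value. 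The alternation of the sign of the argument is what produces the factors $(-1)^j$ in the imaginary sum and in the argument of $\mathcal E$, while the real contributions accumulate with a constant weight $+\tfrac12$, explaining the absence of $(-1)^j$ in the first sum.

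To finish, one must identify the ratios $q_{j-1}/q_j$ coming out of the descent (each a complete quotient $[b_j;\dots]$ lying in $[b_j,b_j+1)$) with the ratios of partial denominators $v_j/v_{j-1}$, and check that the signs line up as $\pm(-1)^j$ in the errors and in the prescribed pattern of the two main sums. This is the standard dictionary between the Euclidean algorithm for $(q,a)$ and the convergents of $a/q$ — in which, through the inverse $\bar a\bmod q$, the denominators are read off the continued fraction in reverse — together with a careful choice, at each reduction modulo $1$, of the representative that preserves the alternation (and of the branch of $(2\pi i/z)^{-\frac12\mp s}$, cf. Remark~\ref{rkexp}, when specializing $s\to0$). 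The regularity of the error, namely that $\mathcal E(x)$ extends to a continuous function on $\R$ with $\mathcal E(x)\ll x$ for $x\ll1$, is already available from Lemma~\ref{rtfs} (case $N=0$, $s=0$, bounds~\eqref{ffb1}--\eqref{ffb2}) and is the content of Theorem~\ref{et}; in particular each $\mathcal E(\pm(-1)^jv_{j-1}/v_j)$ is well defined.

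I expect the main obstacle to be exactly this bookkeeping rather than any new analytic difficulty: all the hard input (the reciprocity itself and the uniform estimates on $\mathcal E_N$) is already in hand, so the argument reduces to a disciplined induction in which the only real care needed is in tracking the alternating signs through the reductions modulo $1$ and in matching the continued-fraction data to the partial denominators $v_j$ with the correct indexing and parity.
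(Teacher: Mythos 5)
Your skeleton --- iterate~\eqref{ftr} with $N=0$, reduce the reciprocal argument modulo $1$ by periodicity of $D_0$, and descend along the Euclidean algorithm until the argument becomes an integer, collecting one copy of $\zeta(\tfrac12)^2$, one main term and one error $\mathcal E$ per step --- is exactly the paper's mechanism, and your one-step recurrence, the alternation analysis (constant weight $+\tfrac12$ on the real sum, $\pm(-1)^{j}$ on the imaginary sum and inside $\mathcal E$), and the termination at $D_0(\tfrac12,0)=\zeta(\tfrac12)^2$ (by analytic continuation; the series you wrote down diverges at $s=\tfrac12$) are all correct. The gap is in your final ``identification'' step. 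Running the descent on $(q,a)$ directly, as you set it up ($q=b_1a+a'$, then recurse on $(a',a)$), the ratio emitted at step $j$ is the \emph{complete quotient} $q_{j-1}/q_j=[b_j;b_{j+1},\dots,b_\kappa]$, whereas the exact formula~\eqref{prefo1} involves $v_j/v_{j-1}=[b_j;b_{j-1},\dots,b_1]$. These agree only to the accuracy $b_j+O(1)$, not exactly: for $a/q=3/7=[0;2,3]$ your descent emits the ratios $7/3$ and $3$, while the lemma's ratios are $v_1/v_0=2$ and $v_2/v_1=7/2$. Since~\eqref{prefo1} is an exact identity, no amount of sign bookkeeping converts one family of ratios into the other; your induction as written proves a true but \emph{different} exact formula, which would suffice for the $O(\kappa)$-level Corollary~\ref{lmfic} but not for Lemma~\ref{lmfil} as stated.

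What the paper does, and what your sketch is missing, is to run the descent not on $a/q$ but on $m/q$ with $m\equiv(-1)^{\kappa+1}\overline a$ modulo $q$: since $m/q=[0;b_\kappa,\dots,b_1]$ is the \emph{reversed} continued fraction, the Euclidean remainders of $m/q$ satisfy $y_{\kappa+1-j}=v_j$, so this descent emits precisely the ratios $v_j/v_{j-1}$. But then one needs the further analytic input $D_0\pr{\tfrac12,\pm \tfrac mq}=D_0\pr{\tfrac12,\pm(-1)^{\kappa+1}\tfrac aq}$, i.e.\ the invariance of the central value of the Estermann function under inversion of the numerator modulo the denominator, which the paper obtains from~\eqref{tffe2} (via the manifest invariance of $M^*$ under $\chi\mapsto\overline\chi$, for $q$ prime) or in general from the functional equation of $D_0$. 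Your remark that ``through the inverse $\overline a$ mod $q$ the denominators are read off in reverse'' names the right dictionary, but without invoking this invariance the dictionary cannot be brought to bear: it is an honest analytic ingredient, not bookkeeping. One last small point: the continuity of $\mathcal E$ and the bound $\mathcal E(x)\ll x$ do come from Lemma~\ref{rtfs} with $N=0$, $s=0$, as you say, but not from Theorem~\ref{et}, which is a \emph{consequence} of this circle of results rather than an available input.
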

\begin{proof}
It is easy to see that if $\frac mq:=(-1)^{\kappa+1}\frac{\overline a}{q}$ (with $\overline a$ the reduced inverse of $a$ mod $q$), then we have $m/q=[0;c_1,\dots,c_\kappa]= [0;b_r,\dots,b_1]$. Moreover, the Euclid algorithm for $m/q$ gives
\est{%\label{aaa}
&y_1=q,\qquad y_2=m,\\
&y_{j}=c_{j}y_{j+1}+y_{j+2},\qquad j=0,\dots \kappa,\\
} 
with $y_{\kappa+1-j}=v_{j}$ (and $y_{\kappa+2}=v_{-1}=0$). 
Thus, from~\eqref{ftr}, for all $1\leq j\leq \kappa$ we have
\es{\label{fftrq}
D\pr{\frac12,\pm \frac {y_{j+1}}{y_{j}}}&=D_0\pr{\frac12,\mp \frac {y_{j+2}}{y_{j+1}}}+ \zeta\pr{\tfrac12}^2 +\frac12\pr{\frac{y_j} {y_{j+1}}}^{\frac12}  \pr{\log \pr{\frac {y_j}{y_{j+1}}}+\gamma -\log 8\pi -\frac {\pi }2}\\
&\quad\pm i\frac12\pr{\frac {y_j}{y_{j+1}}}^{\frac12} \pr{\log \frac{y_j} {y_{j+1}}+\gamma -\log 8\pi +\frac {\pi }2}+\mathcal E\pr{\pm\frac {y_{j+1}}{y_j}},\\
}
where $\mathcal E(x):=\mathcal E_0(0,x)$.
We alternate the use of~\eqref{fftrq} with the reduction modulo the denominator and we obtain
 \es{\label{prefo}
D_0\pr{\frac12,\pm \frac {m}{q}}&= \zeta\pr{\tfrac12}^2({\kappa}+1)+\frac12\sum_{j=1}^{\kappa}\pr{\frac {y_{j}}{y_{j+1}}}^{\frac12}\pr{\log \frac {y_j}{y_{j+1}}+\gamma -\log 8\pi -\frac {\pi }2}+{}\\
&\quad\mp\frac i2\sum_{j=1}^{\kappa}(-1)^j\pr{\frac {y_{j}}{y_{j+1}}}^{\frac12}\pr{\log \frac {y_j}{y_{j+1}}+\gamma -\log 8\pi +\frac {\pi }2}+\sum_{j=1}^{\kappa} \mathcal E\pr{\pm(-1)^j\frac {y_{j+1}}{y_{j}}},
}
since $D_0(\frac12,\frac01)=\zeta(\frac12)^2$. Now, by~\eqref{tffe2} (or by the functional equation for $D_0$) it follows that $D_0(\frac12,\pm\frac mq)=D_0(\frac12,\pm(-1)^{\kappa+1}\frac{ a}{q})$ and~\eqref{prefo1} then follows.
% \es{\label{prefo0}
%D_0\pr{\tfrac12,\pm (-1)^{\kappa+1} \tfrac {a}{q}}&= \zeta\pr{\tfrac12}^2({\kappa}+1)+\frac12\sum_{j=1}^{\kappa}\pr{\frac {v_{j}}{v_{j-1}}}^{\frac12}\pr{\log \frac {v_{j}}{v_{j-1}}+\gamma -\log 8\pi -\frac {\pi }2}+{}\\
%&\quad\pm(-1)^{\kappa}\frac i2\sum_{j=1}^{\kappa}(-1)^{j}\pr{\frac {v_{j}}{v_{j-1}}}^{\frac12}\pr{\log \frac {v_{j}}{v_{j-1}}+\gamma -\log 8\pi +\frac {\pi }2}+\sum_{j=1}^{\kappa} \mathcal E_{\pm(-1)^{k+1-j}}\pr{\frac {v_{j-1}}{v_{j}}},
%}
\end{proof}

\begin{corol}\label{lmfic}
Let $a,q>0$, and let $[b_0;b_1,\cdots,b_{\kappa}]$ be the continued fraction expansion of $a/q$. Then
 \est{
D_0\pr{\tfrac12,\pm \tfrac {a}{q}}&=\frac12\sum_{j=1}^\kappa b_j^{\frac12}(\log b_j +\gamma -\log 8\pi -\frac {\pi }2)\mp\frac i2\sum_{j=1}^\kappa(-1)^jb_j^{\frac12}(\log b_j +\gamma -\log 8\pi +\frac {\pi }2)+O({\kappa}).
}
\end{corol}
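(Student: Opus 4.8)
The plan is to derive Corollary~\ref{lmfic} from Lemma~\ref{lmfil} by a direct comparison of the two continued-fraction sums, replacing the ratios of consecutive partial denominators $v_j/v_{j-1}$ by the partial quotients $b_j$ and absorbing all resulting discrepancies into the $O(\kappa)$ error term. First I would recall the basic recursion for the partial denominators, $v_j = b_j v_{j-1} + v_{j-2}$ (with $v_{-1}=0$, $v_0=1$), so that
\est{
\frac{v_j}{v_{j-1}} = b_j + \frac{v_{j-2}}{v_{j-1}}, \qquad 0 \leq \frac{v_{j-2}}{v_{j-1}} < 1.
}
Thus $b_j \leq v_j/v_{j-1} < b_j + 1$, which immediately gives $(v_j/v_{j-1})^{\frac12} = b_j^{\frac12} + O(b_j^{-\frac12})$ and $\log(v_j/v_{j-1}) = \log b_j + O(b_j^{-1})$ for each $j$ with $b_j \geq 1$. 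Substituting these into each summand of~\eqref{prefo1} and multiplying out, the main terms $b_j^{\frac12}(\log b_j + \gamma - \log 8\pi \mp \tfrac\pi2)$ are recovered exactly, while the cross terms contribute $O(b_j^{-\frac12}\log b_j + b_j^{-\frac12}) = O(1)$ per index $j$.

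The key observation making the error term manageable is that each individual term of the discrepancy is $O(1)$ uniformly in $b_j$: the function $b^{\frac12}\cdot O(b^{-1}) = O(b^{-\frac12})$ is bounded, and $b^{-\frac12}\log b$ is bounded as well over $b \geq 1$. Summing over the $\kappa$ indices therefore yields a total error of $O(\kappa)$, which matches the claimed bound. I would treat the real and imaginary parts in parallel, since they differ only by the sign pattern $(-1)^j$ and by the $\mp\tfrac\pi2$ inside the logarithmic factor; neither affects the size of the error.

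Finally I must account for the error terms $\mathcal E(\pm(-1)^j v_{j-1}/v_j)$ already present in~\eqref{prefo1}. Here I invoke the bound $\mathcal E(x)\ll x$ for $x\ll 1$ from Lemma~\ref{lmfil}: since $v_{j-1}/v_j \leq 1$ and in fact $v_{j-1}/v_j \ll b_j^{-1}\ll 1$ for $j\geq 1$, each such term is $O(v_{j-1}/v_j) = O(1)$, so their sum over $j$ is again $O(\kappa)$. Collecting the three sources of error—the substitution discrepancies, the remaining $\mathcal E$ terms, and the constant $\zeta(\tfrac12)^2(\kappa+1)$ from the leading term of~\eqref{prefo1}, which is itself $O(\kappa)$—completes the proof. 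The only point requiring care is verifying that the substitution errors are genuinely uniform in $b_j$ rather than growing with the partial quotients; this is the step I would expect to be the main (though mild) obstacle, and it is resolved precisely because the problematic factor $b_j^{\frac12}$ is damped by the $O(b_j^{-1})$ correction to the logarithm and the $O(b_j^{-\frac12})$ correction to the square root.
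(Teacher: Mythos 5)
Your proposal is correct and follows essentially the same route as the paper's own proof: both substitute $v_j/v_{j-1}=b_j+O(1)$ (equivalently $(v_j/v_{j-1})^{\frac12}=b_j^{\frac12}+O(b_j^{-\frac12})$ and $\log(v_j/v_{j-1})=\log b_j+O(1/b_j)$) into~\eqref{prefo1}, observe that the per-term discrepancy $O(b_j^{-\frac12}\log(b_j+1))$ is uniformly bounded since $b_j\geq1$, and absorb the $\mathcal E(\pm(-1)^j v_{j-1}/v_j)\ll 1/b_j$ terms together with $\zeta(\tfrac12)^2(\kappa+1)$ into the $O(\kappa)$ error. Your explicit verification that the substitution errors are uniform in $b_j$ is exactly the (implicit) content of the paper's one-line expansion, so nothing further is needed.
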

\begin{proof}
For $j=1,\dots \kappa$ have $v_{j}/v_{j-1}=b_j+O(1)$. Also, we have $\mathcal E\pr{x}\ll x$ for $x\ll 1$. Thus,
 \est{
D_0\pr{\tfrac12,\pm\tfrac {a}{q}}&= \zeta\pr{\tfrac12}^2({\kappa}+1)+\frac12\sum_j(b_j^{\frac12}+O(b_j^{-1/2}))(\log b_j +O(1/b_j) +\gamma -\log 8\pi -\frac {\pi }2)+{}\\
&\quad\mp\frac i2\sum_j(-1)^j(b_j^{\frac12}+O(b_j^{-1/2}))(\log b_j +O(1/b_j) +\gamma -\log 8\pi +\frac {\pi }2)+O\bigg(\sum_{j} \frac {1}{b_{j}}\bigg)\\
&= \zeta\pr{\tfrac12}^2({\kappa}+1)+\frac12\sum_jb_j^{\frac12}(\log b_j +\gamma -\log 8\pi -\frac {\pi }2)+{}\\
&\quad\mp\frac i2\sum_j(-1)^jb_j^{\frac12}(\log b_j +\gamma -\log 8\pi +\frac {\pi }2)+O\bigg(\sum_{j} \frac {\log (b_j+1)}{b_{j}^\frac12}\bigg)
}
and the Corollary follows.
\end{proof}

\begin{proof}[Proof of Theorem~\ref{Ypo} and Corollary~\ref{Ypc}]
Theorem~\ref{Ypo} follows immediately by~\eqref{adae} and~\eqref{prefo1}. Corollary~\ref{Ypc} can be then obtained from Theorem~\ref{Ypo} in the same way as in the proof of Corollary~\ref{lmfic} using the identities
\est{
\frac12M_0^*\pr{a,q}+\frac12M_0^*\pr{-a,q}%&=\frac{q^{\frac12}}{\varphi(q)}\sumstar_{\substack{\chi\mod q,\\\chi(-1)=1}} \pmd{L\pr{\frac12,\overline\chi}}^2\chi(a)-\frac{1}{\varphi(q)}\pr{2-2q^{-\frac12}}\zeta(\tfrac12)^2\\
&=\frac{q^{\frac12}}{\varphi(q)}\sumstar_{\substack{\chi\mod q,\\\chi(-1)=1}} \pmd{L\pr{\frac12,\overline\chi}}^2\chi(a)+O(q^{-\frac12})
}
and
\est{
\frac12M_0^*\pr{a,q}-\frac12M_0^*\pr{-a,q}&=\frac{q^{\frac12}}{\varphi(q)}\sumstar_{\substack{\chi\mod q,\\\chi(-1)=-1}} \pmd{L\pr{\frac12,\overline\chi}}^2\chi(a)
}
which are immediate consequences of the definition of $M_0^*$.
\end{proof}
In order to prove Corollary~\ref{fccc}, we need a bound for the first moment of $M(a,q)$.
\begin{lemma}\label{lfc}
Let $q$ be prime. Then
\est{
\frac1q\sum_{a=1}^q|M(a,q)|\ll \log q.
}
\end{lemma}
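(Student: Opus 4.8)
The plan is to read off the bound from the continued-fraction formula of Corollary~\ref{Ypc}. Since $q$ is prime, $\chi(q)=0$ for every primitive $\chi\mod q$, so $M(q,q)=0$ and it suffices to sum over $1\le a\le q-1$, all of which are coprime to $q$. For such $a$, writing $[0;b_1,\dots,b_{\kappa}]$ for the continued fraction of $a/q$, Corollary~\ref{Ypc} gives $|M(a,q)|\ll \sum_{j=1}^{\kappa}b_j^{\frac12}\log(b_j+2)+\kappa$, since $|\log b_j+\gamma-\log 8\pi|\ll\log(b_j+2)$. Because consecutive partial denominators grow at least geometrically (one has $q=v_{\kappa}\ge\phi^{\kappa-1}$ with $\phi$ the golden ratio), $\kappa\ll\log q$ for every individual $a$, whence $\frac1q\sum_{a}\kappa\ll\log q$. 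Thus everything reduces to proving
\[
\frac1q\sum_{a=1}^{q-1}\sum_{j=1}^{\kappa}g(b_j)\ll\log q,\qquad g(b):=b^{\frac12}\log(b+2).
\]

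The heart of the argument is a counting lemma for partial quotients: with $N_{\ge}(b):=\#\{(a,j):1\le a\le q-1,\ b_j(a)\ge b\}$, I claim $N_{\ge}(b)\ll \frac{q\log q}{b}$. Writing $c=v_{j-1}$, the relation $b_j=\lfloor v_j/v_{j-1}\rfloor\ge b$ forces $v_j\ge bc$, and the best-approximation property $|a\,v_{j-1}-p_{j-1}q|<q/v_j$ of the convergent $p_{j-1}/v_{j-1}$ then yields $\|ac/q\|<1/(bc)$, where $\|t\|$ is the distance from $t$ to the nearest integer. The map $(a,j)\mapsto(a,v_{j-1})$ is injective up to bounded multiplicity (the denominators $v_0\le v_1<v_2<\cdots$ are strictly increasing apart from the trivial coincidence $v_0=v_1$), so $N_{\ge}(b)\ll\sum_{c}\#\{a:\|ac/q\|<1/(bc)\}$. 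Since $\gcd(ac,q)=1$ gives $\|ac/q\|\ge 1/q$, the inner set is empty unless $c<q/b$; and for such $c$ the primality of $q$ makes $a\mapsto ac$ a permutation of the nonzero residues mod $q$, so the count is $\ll q/(bc)$. Summing over $c<q/b$ gives $N_{\ge}(b)\ll\frac{q}{b}\sum_{c<q/b}\frac1c\ll\frac{q\log q}{b}$, as claimed.

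Finally, partial summation closes the argument. Writing $g(b)=\sum_{b'\le b}\bigl(g(b')-g(b'-1)\bigr)$ with $g(0)=0$ and swapping the order of summation, one gets $\sum_{a,j}g(b_j)=\sum_{b\ge1}\bigl(g(b)-g(b-1)\bigr)N_{\ge}(b)$, and since $g(b)-g(b-1)\ll b^{-\frac12}\log(b+2)$ this is
\[
\ll q\log q\sum_{b\ge1}\frac{\log(b+2)}{b^{\frac32}}\ll q\log q .
\]
Dividing by $q$ and combining with the $O(\kappa)$ contribution yields the Lemma. I expect the main obstacle to be the counting lemma, and within it the single decisive step is the conversion of the combinatorial condition $b_j\ge b$ into the Diophantine inequality $\|ac/q\|<1/(bc)$ through the best-approximation property of convergents; once this is in hand the primality of $q$ renders the residue count routine, and the weight $g(b)=b^{1/2}\log(b+2)$ is light enough that the resulting series $\sum_b \log(b+2)\,b^{-3/2}$ converges, which is exactly what produces the clean $\log q$ (rather than the $(\log q)^2$ that a naive Cauchy--Schwarz bound against the fourth moment would give).
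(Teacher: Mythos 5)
Your proposal is correct, but it proves the lemma by a genuinely different route than the paper. The paper's proof is a short, self-contained computation that never touches the reciprocity machinery: it bounds $M(a,q)$ via the approximate functional equation for $L(\frac12,\chi)$ (Lemma 2.3 of \cite{You11a}),
\est{
M(a,q)\ll q^{\frac12}\sum_{\substack{n\equiv \pm am\mod q,\\ (nm,q)=1}}\frac{1}{\sqrt{mn}}\min\pr{1,\pr{\frac{q}{mn}}^{100}}+q^{-\frac12}\sum_{(nm,q)=1}\frac{1}{\sqrt{mn}}\min\pr{1,\pr{\frac{q}{mn}}^{100}},
}
and then averages over $a$, so that each pair $(m,n)$ contributes for $O(1)$ values of $a$ and the whole average collapses to $q^{-\frac12}\sum_{mn\leq q}(mn)^{-\frac12}\ll \log q$. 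You instead invoke Corollary~\ref{Ypc} --- hence the entire chain Theorem~\ref{tff}, Lemma~\ref{rtfs}, Proposition~\ref{cfd}, Theorem~\ref{Ypo} --- and reduce everything to the counting estimate $N_{\geq}(b)\ll q\log q/b$ for large partial quotients, which you establish correctly: the conversion of $b_j\geq b$ into $\|a v_{j-1}/q\|\leq 1/v_j\leq 1/(b v_{j-1})$ via the best-approximation property is sound, the map $j\mapsto v_{j-1}$ has multiplicity at most $2$ (only the coincidence $v_0=v_1=1$), primality of $q$ makes $a\mapsto ac$ a permutation of nonzero residues so the inner count is empty for $c>q/b$ and $\ll q/(bc)$ otherwise, and the partial summation against $g(b)=b^{\frac12}\log(b+2)$ closes the argument because $\sum_b \log(b+2)\,b^{-\frac32}$ converges. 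Importantly, there is no circularity: Lemma~\ref{lfc} is used in the paper only in the proof of Corollary~\ref{fccc}, strictly downstream of Corollary~\ref{Ypc}. The trade-off is clear: the paper's argument is two lines and independent of the deep results, whereas yours is heavier but yields more information --- your counting lemma quantifies the distribution of large partial quotients $b_j$ of fractions $a/q$, and it explains structurally why the first absolute moment is $\log q$ rather than the $(\log q)^2$ one would get from Cauchy--Schwarz against the fourth moment. One small point you should make explicit: you need the $O(\kappa)$ error in Corollary~\ref{Ypc} with an implied constant uniform in $a$ and $q$, which does hold since the error function there satisfies $\mathcal E(x)\ll x$ uniformly.
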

\begin{proof}
The approximate functional equation for $L(\frac12,\chi)$ gives (cf. Lemma 2.3 of~\cite{You11a})
\est{
M(a,q)\ll {q^\frac12}\sum_{\substack{n\equiv \pm am\mod q,\\ (nm,q)=1}}\frac{1}{\sqrt{mn}}\min\pr{1,\pr{\frac{q}{mn}}^{100}}+{q^{-\frac12}}\sum_{\substack{ (nm,q)=1}}\frac{1}{\sqrt{mn}}\min\pr{1,\pr{\frac{q}{mn}}^{100}}
}
Thus,
\est{
\frac1q\sum_{a=1}^q|M(a,q)|&\ll\frac1{q^\frac12} \sum_{\substack{ nm\leq q}}\frac{1}{\sqrt{mn}}+\frac1{q^\frac12} \sum_{\substack{ nm\geq q}}\frac{1}{\sqrt{mn}}\pr{\frac{q}{mn}}^{100}\\
&\ll \log q
}
\end{proof}
\begin{proof}[Proof of Corollary~\ref{fccc}]
By Corollary~\ref{Ypc} we have
\est{ 
&\frac{q^{\frac12}}{\varphi(q)}\sumstar_{\substack{\chi\mod q,\\\chi(-1)=\pm1}} \pmd{L\pr{\frac12,\chi}}^2\chi(a)=\pm \frac12 f_{\pm}(a/q)+O(\log (q)),\\
}
whence, by Lemma~\ref{lfc},
\est{ 
 \frac1q \sum_{a=1}^qf_{\pm}(a/q)^2&=
4\frac{q}{\varphi(q)^2}\frac1q \sum_{a=1}^q\bigg(\sumstar_{\substack{\chi\mod q,\\\chi(-1)=\pm1}} \pmd{L\pr{\frac12,\chi}}^2\chi(a)\bigg)^2\\
&\quad+O\bigg(\log q \frac1q \sum_{a=1}^q \bigg|\frac{q^{\frac12}}{\varphi(q)}\sumstar_{\substack{\chi\mod q,\\\chi(-1)=\pm1}} \pmd{L\pr{\frac12,\chi}}^2\chi(a)\bigg|\bigg)+O(\log^2 q),\\
&=
4\frac{1}{\varphi(q)} \sumstar_{\substack{\chi\mod q,\\\chi(-1)=\pm1}} \pmd{L\pr{\frac12,\chi}}^4+O\pr{\log^2q}\\
}
and the corollary follows by Young's theorem~\cite{You11b}.
\end{proof}
\begin{proof}[Proof of Theorem~\ref{cffd}]
Let $x,y\in\R$. For any $\eps>0$, take $b_0,b_1,\cdots,b_r$ be such that $|x-w|<\eps$, where $w:=[b_0;b_1,\cdots,b_r]$ and $r\geq5$ is odd. For $b_{r+1},b_{r+2}\in\Z_{>0}$, let $a/q:=[0;b_1,\cdots,b_r,b_{r+1},b_{r+2}]$ and let $v_j$ be the $j$-th partial denominator. We will consider $x$ and $w$ (and thus $r$, $b_1,\dots,b_r$, $v_1,\dots, v_r$) to be fixed and $b_{r+1},b_{r+2}$ to be large variables that we will choose at the end of the argument.
Using Corollary~\ref{lmfic}, we obtain
\est{
\eta\pr{\tfrac aq}&=\sum_{\substack{j=1,\\j\tn{ odd}}}^{r+2}\pr{\frac{v_{j}}{v_{j-1}}}^{\frac12}(\log \pr{{v_{j}}/{v_{j-1}}} +\gamma -\log 8\pi)-\frac\pi 2\sum_{\substack{j=1,\\j\tn{ even}}}^{r+2}\pr{\frac{v_{j}}{v_{j-1}}}^{\frac12}+\zeta(\tfrac 12)^2(r+2)+{}\\
&\quad+\sum_{j=1}^{r+2}\pr{(1-i)\mathcal{E}\pr{(-1)^j \frac{v_{j-1}}{v_j}}+(1+i)\mathcal{E}\pr{(-1)^{j+1}\frac{v_{j-1}}{v_j}}}.
}
Now, $v_{j+1}/v_{j}=b_{j+1}+\frac{v_{j-1}}{v_j}=b_{j+1}+O(1)$. Thus, since $\mathcal{E}(z)\ll z$ for $z\ll1$ we have
\est{
\eta\pr{\tfrac aq}&=c_{x,w}+\pr{{v_{r+2}}/{v_{r+1}}}^{\frac12}(\log \pr{{v_{r+2}}/{v_{r+1}}} +\gamma -\log 8\pi)-\frac\pi 2\pr{{v_{r+1}}/{v_{r}}}^{\frac12}+o(1)\\
&=c_{x,w}+b_{r+2}^{\frac12}(\log b_{r+2}+\gamma -\log 8\pi)-\frac\pi 2b_{r+1}^{\frac12}+o(1),
}
as $b_{r+1}$ and $b_{r+2}$ tend to infinity, where $c_{x,w}$ depends only on $x$ and $w$.
Now, taking
\est{
b_{r+1}=\left[\frac{4}{\pi^2}\pr{c_{x,w}+b_{r+2}^{\frac12}(\log b_{r+2}+\gamma -\log 8\pi)-y}^2\right],
}
we have 
\est{
\eta\pr{\tfrac aq}=y+o(1).
}
as $b_{r+2}$ goes to infinity and the Theorem then follows.
\end{proof}

\section{Moments with two twists}

In this section we prove Corollary~\ref{csb} and Corollary~\ref{c3t}. First we prove the following Lemma which relates the continued fraction expansion of $\{\frac{h\overline k}{q}\}$ to that of $\{-\frac{h\overline q}{k}\}$ and $\{-\frac{k\overline q}{h}\}$.

\begin{lemma}\label{hat}
Let $h,k,q\in\Z_{>0}$, with $(h,k)=(h,q)=(k,q)=1$. Let $[0;b_1,\dots,b_r]$ and $[0;c_1,\dots,c_{s}]$ be the continued fraction expansion of $\{-\frac{h\overline q}{k}\}$ and $\{-\frac{k\overline q}{h}\}$ respectively with $r$ and $s$ even (where $\overline q$ denotes the inverse modulo the denominator). If $q\geq 4hk$, then
 the continued fraction expansion of $\{\frac{h\overline k}{q}\}$ is $[0;b_1,\dots,b_r, \frac{q}{hk}+O(1),c_s,\dots,c_1]$.
\end{lemma}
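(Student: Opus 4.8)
The plan is to recast the statement in terms of $2\times 2$ integer matrices and to exhibit the claimed continued fraction as a single matrix product. Set $T(a):=\begin{pmatrix} a & 1\\ 1 & 0\end{pmatrix}$, so that the entries of any product $T(a_1)\cdots T(a_n)$ are continuants of the string $a_1,\dots,a_n$ and its truncations, and $[0;a_1,\dots,a_n]$ is the ratio of the lower-left to the upper-left entry. I would put $U:=T(b_1)\cdots T(b_r)$ and $W:=T(c_s)\cdots T(c_1)$. Since each $T(a)$ is symmetric, $W$ is the transpose of $T(c_1)\cdots T(c_s)$, so reading the $c_j$ backwards merely transposes the matrix attached to $\{-k\overline q/h\}$; this is exactly why the lemma reverses the $c$'s, and it puts $(h,\gamma)$ in the top row of $W$, where $\gamma/h=\{-k\overline q/h\}$. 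Writing $U=\begin{pmatrix} k & u\\ \beta & v\end{pmatrix}$ and $W=\begin{pmatrix} h & \gamma\\ w & z\end{pmatrix}$ with $\beta/k=\{-h\overline q/k\}$, I record the determinant identities $kv-u\beta=(-1)^{r}$ and $hz-\gamma w=(-1)^{s}$ together with the elementary size bounds $0\le u<k$ and $0\le w<h$.

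Next I would expand $P:=U\,T(B)\,W$ for an undetermined integer $B$ and read off
\[
P_{11}=hkB+uh+kw,\qquad kP_{21}=h+\beta\,P_{11},
\]
the second identity using $kv=1+u\beta$, which holds once $r$ is even. The continued fraction $[0;b_1,\dots,b_r,B,c_s,\dots,c_1]$ equals $P_{21}/P_{11}$, and $\det P=\pm1$ shows this fraction is already reduced. The lemma then collapses to choosing $B$ so that $P_{11}=q$: for that value one gets $kP_{21}\equiv h\pmod q$ with $0<P_{21}<q$, so $P_{21}$ is the residue of $h\overline k$ and $P_{21}/P_{11}=\{h\overline k/q\}$ exactly.

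The real content is to verify that $B:=(q-uh-kw)/(hk)$ is a positive integer of the announced size, and this is where the parity hypotheses are used. With $r,s$ even the determinant identities give $u\beta\equiv-1\pmod k$ and $\gamma w\equiv-1\pmod h$; feeding in $\beta\equiv-h\overline q\pmod k$ and $\gamma\equiv-k\overline q\pmod h$ produces the key congruences $uh\equiv q\pmod k$ and $kw\equiv q\pmod h$. These say that $q-uh-kw$ is divisible by $k$ and by $h$, hence by $hk$ since $(h,k)=1$, so $B\in\Z$. The bounds $u<k$, $w<h$ give $0\le uh+kw<2hk$, whence $B=q/(hk)+O(1)$, and the assumption $q\ge 4hk$ forces $B>q/(hk)-2\ge2$, so $B\ge1$ and the string is an admissible continued fraction.

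I expect the main obstacle to be precisely this last step: correctly determining the residues of $u$ and $w$ modulo $k$ and $h$---where the evenness of $r$ and $s$ is indispensable, since an odd length would flip a sign and simultaneously wreck both the divisibility of $B$ and the sign of the numerator $P_{21}$---and checking that the concatenation does not degenerate at the junctions $b_r,B$ or $B,c_s$. The positivity $B\ge1$ coming from $q\ge4hk$ handles the latter, up to the harmless standard ambiguity $[\dots,n]=[\dots,n-1,1]$ at the two ends.
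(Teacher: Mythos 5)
Your proof is correct, but it takes a genuinely different route from the paper's. The paper argues top-down: starting from the identity $\{\frac{h\overline k}{q}\}=\{-\frac{h\overline q}{k}\}+\frac{h}{qk}$ (equation~\eqref{rcf}), it shows by a best-approximation estimate that $\{-\frac{h\overline q}{k}\}$ is an even convergent of $\{\frac{h\overline k}{q}\}$ (this is where $q\geq 2hk$ first enters), computes the next partial quotient $d_{r+1}=\frac{q}{hk}-\delta_{h,k,q}$ with $0<\delta_{h,k,q}<2$ from the convergent recursion and $\beta_r\alpha_{r-1}-\beta_{r-1}\alpha_r=(-1)^r=1$, repeats the argument on the reversed expansion of $\{\frac{k\overline h}{q}\}$, and finally pins down the total length $n=r+s+1$ by comparing denominators, which is where $q\geq 4hk$ is used. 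You instead build the expansion bottom-up: solving $P_{11}=q$ in $P=U\,T(B)\,W$ gives the middle quotient exactly, $B=(q-uh-kw)/(hk)$, whose integrality follows from the determinant congruences $u\beta\equiv-1\pmod k$ and $\gamma w\equiv-1\pmod h$ --- the same place where evenness of $r,s$ does its work in the paper --- and your identification step $kP_{21}=h+\beta P_{11}$ is precisely the paper's key relation~\eqref{rcf} rederived in matrix form. Your route buys an automatic length count (no denominator-comparison argument needed, since the string has length $r+s+1$ by construction) and an exact formula for the middle quotient, while the paper's route explains structurally why the expansion must begin with the $b_j$'s. Two small tidying points: your inequality chain actually yields $B>2$, hence $B\geq3$, not merely $B\geq1$; and the claim $0<P_{21}<q$ deserves its one-line justification, namely that all partial quotients are positive so $P_{21}/P_{11}=[0;b_1,\dots,b_r,B,c_s,\dots,c_1]\in(0,1)$ while $P_{11}=q$. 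The terminal ambiguity you flag (the concatenated string ends in $c_1$, which may equal $1$, so it may be the non-canonical expansion) is shared by the paper's statement itself and is harmless in its applications, where such discrepancies are absorbed into $O(1)$ terms.
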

\begin{proof}
First, we observe that if $q\geq 2hk$ then $\{-\frac{h\overline q}{k}\}$ is an even convergent for the continued fraction of $\{\frac{h\overline k}{q}\}$. Indeed, this is equivalent to showing that $\{-\frac{h\overline q}{k}\}<\{\frac{h\overline k}{q}\}$ and that for every fraction $\frac{u'}{k'}$ with $0<k'<k$ we have
\est{
|xk'-u'|>|xk-u|,
}
where $x:=\{\frac{h\overline k}{q}\}$ and $\frac{u}k:=\{-\frac{h\overline q}{k}\}$. The fact that  $\{-\frac{h\overline q}{k}\}<\{\frac{h\overline k}{q}\}$ follows immediately from 
\est{
\frac{h\overline k}{q}\equiv -\frac{h\overline q}{k}+\frac{h}{qk}\mod 1,
}
which for $|h|<q$ implies
\es{\label{rcf}
\pg{\frac{h\overline k}{q}}=\pg{ -\frac{h\overline q}{k}}+\frac{h}{qk}.
}
Now, if $|xk'-u'|\leq |xk-u|$ for some $u',k'$ such that $0<k'<k$, then
\est{
\frac1{kk'}\leq \pmd{\frac{u'}{k'}-\frac{u}{k}}\leq\pmd{x-\frac uk}+\pmd{x-\frac {u'}{k'}}\leq \pr{1+\frac{k}{k'}}\pmd{x-\frac uk}<2\frac{k}{k'}\pmd{x-\frac uk}=\frac{2h}{qk'},
}
by~\eqref{rcf}, and so we obtain a contradiction.

Thus, we have that $\{\frac{h\overline k}{q}\}=[0;d_1,\dots,d_n]=[0;b_1,\dots,b_r,d_{r+1},\dots,d_n]$ for some odd integer $n>r$ and some $d_{1},\dots,d_n\in\Z_{>0}$. Moreover, 
\est{
\pg{\frac{h\overline k}{q}}=[0;d_1,\dots,d_{n}]=[0;b_1,\dots,b_r,y]=\frac{y\alpha_r+\alpha_{r-1}}{y\beta_r+\beta_{r-1}},
}
where $y=[d_{r+1};d_{r+2},\dots,d_n]$ and $\frac{\alpha_i}{\beta_i}$ denotes the $i$-th convergent of $\{-\frac{h\overline q}{k}\}$. Therefore, by~\eqref{rcf} we have
\est{
\frac{h}{qk}=\pg{\frac{h\overline k}{q}}-\pg{ -\frac{h\overline q}{k}}=\frac{y\alpha_r+\alpha_{r-1}}{y\beta_r+\beta_{r-1}}-\frac{\alpha_r}{\beta_r}
%=\frac{y\alpha_r+\alpha_{r-1}}{y\beta_r+\beta_{r-1}}-\frac{\alpha_r}{\beta_r}
%\\=\frac{\alpha_{r}}{\beta_r}\pr{\frac{\frac{\alpha_{r-1}}{y\alpha_r}-\frac{\beta_{r-1}}{y\beta_r}}{1+\frac{\beta_{r-1}}{y\beta_r}}}
=\frac{1}{\beta_r}\pr{\frac{1}{\beta_ry+\beta_{r-1}}},
}
since $\beta_r\alpha_{r-1}-\beta_{r-1}\alpha_{r}=(-1)^r=1$. It follows that 
%\est{
$
y=\frac q{hk}-\frac{\beta_{r-1}}{k}
$
%}
and thus 
\est{
d_{r+1}=\frac{q}{hk}-\delta_{h,k,q}
}
with $0<\delta_{h,k,q}=[0;d_{r+2},\dots,d_n]+\frac{\beta_{r-1}}{k}<2$.

Now, we have that the continued fraction expansion of  $\{\frac{k\overline h}{q}\}$ is $[0;d_n,\dots,d_{1}]$ and repeating the same calculations as above we see that $[0;d_n,\dots,d_{1}]=[0;c_1,\dots,c_s,\frac{q}{hk}-\delta_{h,k,q}',d_{n-s-1}\dots,d_{1}]$, with $0<\delta_{h,k,q}'<2$.  Thus, to conclude we just need to show that $n=r+s+1$ if $q\geq 4hk$. First we observe that $n\leq r+s+1$, since otherwise, we would have
\est{
\frac{h\overline k}{q}=[0;b_1,\dots,b_{r},\frac{q}{hk}-\delta_{h,k,q},d_{r+2},\dots,d_{n-s-1},\frac{q}{hk}-\delta_{h,k,q}',c_s,\dots,c_1]
}
and thus comparing the denominators we obtain $q>h\pr{\frac {q}{hk}-2}^2k\geq q$ (for $q\geq 4hk$), which gives a contradiction. Similarly, if we had $n<r+s-1$, then we would get that $q$ is less or equal than the denominator of $[0;b_1,\dots,b_r,c_s,\dots,c_1]$, which is bounded by $hk$. This again gives a contradiction and so the proof of the Lemma is complete.
\end{proof}

\begin{proof}[Proof of Corollary~\ref{csb} and Corollary~\ref{c3t}]
Let $[0;b_1,\dots,b_r]$ and $[0;c_1,\dots,c_{s}]$ be the continued fraction expansion of $\{-\frac{h\overline q}{k}\}$ and $\{-\frac{k\overline q}{h}\}$ respectively, with $r$ and $s$ even. By Corollary~\ref{Ypc} and Lemma~\ref{hat}, we have
\est{%\label{eav}
M_\pm(h,k;q)&= \frac12\pr{\frac q{hk}}^{\frac12}\pr{\log \frac q{hk} +\gamma -\log 8\pi \mp\frac {\pi }2}+O(\log q)+{}\\
&\quad\pm \frac12\sum_{j=1}^r(\pm 1)^jb_j^{\frac12}(\log b_j +\gamma -\log 8\pi \mp\frac {\pi }2)\pm \frac12\sum_{j=1}^s(\pm 1)^jc_j^{\frac12}(\log c_j +\gamma -\log 8\pi \mp\frac {\pi }2).
}
Thus, to obtain~\eqref{csbf} it is enough to bound trivially the two series on the second line. Moreover, if $h$ and $k$ are both primes, we obtain Corollary~\ref{c3t}, since in this case the previous formula becomes
\est{
M_\pm(h,k;q)&= \frac12\pr{\frac q{hk}}^{\frac12}\pr{\log \frac q{hk} +\gamma -\log 8\pi \mp\frac {\pi }2}+M_\pm(-h,q;k)+M_\pm(-k,q;h)+O(\log q)\\
&= \frac12\pr{\frac q{hk}}^{\frac12}\pr{\log \frac q{hk} +\gamma -\log 8\pi \mp\frac {\pi }2}\pm M_\pm(h,q;k)\pm M_\pm(k,q;h)+O(\log q).
}
\end{proof}

\appendix

\end{document}